\newtheorem{theorem}{Theorem}[section]
\newtheorem{lemma}[theorem]{Lemma}
\newtheorem{corollary}[theorem]{Corollary}
\newtheorem{fact}[theorem]{Fact}
\newtheorem{proposition}[theorem]{Proposition}
\theoremstyle{definition}
\newtheorem{example}[theorem]{Example}
\newtheorem{remark}[theorem]{Remark}
\newtheorem{definition}[theorem]{Definition}
\def\End{\operatorname{End}}
\def\alg{\operatorname{alg}}
\def\id{\operatorname{id}}
\def\ord{\operatorname{ord}}
\begin{document}

\title{$F$-sets and finite automata}

\author{Jason Bell}
\address{Jason Bell\\
University of Waterloo\\
Department of Pure Mathematics\\
200 University Avenue West\\
Waterloo, Ontario \  N2L 3G1\\
Canada}
\email{jpbell@uwaterloo.ca}

\author{Rahim Moosa}
\address{Rahim Moosa\\
University of Waterloo\\
Department of Pure Mathematics\\
200 University Avenue West\\
Waterloo, Ontario \  N2L 3G1\\
Canada}
\email{rmoosa@uwaterloo.ca}

\subjclass[2010]{11B85 (primary) 14G17 (secondary)}

\keywords{automatics sets, $F$-sets, semiabelian varieties, positive characteristic Mordell-Lang, Skolem-Mahler-Lech.}

\thanks{{\em Acknowledgements}: Both authors were partially supported by their NSERC Discovery Grants. The first author was also supported by the European Research Council (ERC) under the European Union's Horizon 2020 research and innovation programme under the Grant Agreement No 648132.}

\date{\today}

\selectlanguage{english} 

\begin{abstract}
It is observed that Derksen's Skolem-Mahler-Lech theorem is a special case of the isotrivial positive characteristic Mordell-Lang theorem due to the second author and Scanlon.
This motivates an extension of the classical notion of a $k$-automatic subset of the natural numbers to that of an {\em $F$-automatic subset} of a finitely generated abelian group $\Gamma$ equipped with an endomorphism $F$.
Applied to the Mordell-Lang context, where $F$ is the Frobenius action on a commutative algebraic group $G$ over a finite field, and $\Gamma$ is a finitely generated $F$-invariant subgroup of $G$,
it is shown that the ``$F$-subsets" of $\Gamma$ introduced by the second author and Scanlon are $F$-automatic.
It follows that when $G$ is semiabelian and $X\subseteq G$ is a closed subvariety then $X\cap \Gamma$ is $F$-automatic.
Derksen's notion of a $k$-normal subset of the natural numbers is also here extended to the above abstract setting, and it is shown that $F$-subsets are $F$-normal.
In particular, the $X\cap \Gamma$ appearing in the Mordell-Lang problem are $F$-normal.
This generalises Derksen's Skolem-Mahler-Lech theorem to the Mordell-Lang context.
\end{abstract}
\maketitle

\selectlanguage{french} 
\begin{abstract}
\noindent
On observe que le th\'eor\`eme de Skolem-Mahler-Lech de Derksen est un cas particulier du th\'eor\`eme de Mordell-Lang isotrivial en caract\'eristique positive d\^u au second auteur et Scanlon. Cela motive une extension de la notion classique d'un sous-ensemble $k$-automatique des nombres naturels \`a celle d'un ensemble $F$-automatique d'un groupe ab\'elien de type fini $\Gamma$ \'equip\'e d'un endomorphisme $F$. Dans le contexte de Mordell-Lang, o\`u $F$ est l'action de Frobenius sur un groupe alg\'ebrique commutatif $G$ sur un corps fini, et $\Gamma$ est un sous-groupe $F$-invariant de $G$, il est montr\'e que les ``$F$-sous-ensembles" de $\Gamma$ introduits par le second auteur et Scanlon sont $F$-automatiques. Il en d\'ecoule que lorsque $G$ est semi-ab\'elien et $X\subseteq G$ est une sous-vari\'et\'e ferm\'ee, $X\cap\Gamma$ est $F$-automatique. La notion d'un sous-ensemble $k$-normal des nombres naturels au sens de Derksen est \'egalement g\'en\'eralis\'ee au contexte abstrait cit\'e ci-dessus, et il est d\'emontr\'e que les $F$-sous-ensembles sont $F$-normaux. En particulier, les ensembles $X\cap\Gamma$ qui apparaissent dans le probl\`eme de Mordell-Lang sont $F$-normaux. Cela g\'en\'eralise le th\'eor\`eme de Skolem-Mahler-Lech de Derksen au contexte de Mordell-Lang.
\end{abstract}

\maketitle
\vfill\pagebreak
\selectlanguage{english}

\setcounter{tocdepth}{1}
\tableofcontents

\section{Introduction}

\noindent
This paper is concerned with the structure of sets arising in the diophantine geometric context of the Mordell-Lang theorem; sets of the form $X\cap\Gamma$ where $X$ is a closed subvariety of a commutative algebraic group $G$ and $\Gamma$ is a finitely generated subgroup of $G$.
When $G$ is a semiabelian variety in characteristic zero, the structure of such sets is described by Faltings' theorem~\cite{faltings}.
In positive characteristic, Hrushovski~\cite{udi} proved a function field version that treats as exceptional the case when $G$ is defined over a finite field.
That exceptional case, called the {\em isotrivial} case, is dealt with by the second author and Scanlon in~\cite{fsets} by giving a combinatorial description of $X\cap \Gamma$ as {\em $F$-sets}, essentially finite unions of translates of subgroups by orbits under the Frobenius endomorphism -- see Definition~\ref{defnfsets} below for details.
Now, the Skolem-Mahler-Lech theorem on the vanishing set of a linear recurrence can also be recast in these terms.
In positive characteristic it amounts to the case when $G$ is a power of the multiplicative torus, $\Gamma$ is cyclic, and $X$ is linear.
The fundamental theorem here is that of Derksen~\cite{derksen}, which can be viewed as giving a description of $X\cap\Gamma$ in terms of finite state automata.
On the face of it, these two types of descriptions, $F$-sets versus automatic sets, have little to do with each other.
However, in this paper, we effect a unification, showing how to interpret $F$-sets in terms of finite state machines in a very general setting.

In fact, Derksen's Skolem-Mahler-Lech theorem of~\cite{derksen} can be deduced from the isotrivial Mordell-Lang theorem of~\cite{fsets}.
This seems to have escaped the notice of Derksen, though he does mention~\cite{fsets} when discussing related problems.
The initial impetus for writing this paper was to make that argument explicit, translating between the language of these two papers.
This is done in Section~\ref{fsets-to-sml}.

We are lead to ask how $F$-sets relate to automaticity, not only in the Skolem-Mahler-Lech context, but rather in the more general setting of the isotrivial Mordell-Lang.
In Section~\ref{section-fset-fauto} we take a first step by showing that for any finitely generated abelian group $\Gamma$, if $F$ is a positive integer acting on $\Gamma$ by multiplication, then every $F$-subset of $\Gamma$ is $F$-automatic.
This is Theorem~\ref{fset-fauto} below.

But the isotrivial Mordell-Lang context includes situations when the endomorphism is not multiplication by an integer, for example when it is the Frobenius action on the rational points of an abelian variety over a finite field.
We are lead to consider the general setting of a finitely generated abelian group $\Gamma$ equipped with an injective endomorphism $F$.
In order to make sense of what an $F$-automatic subset of $\Gamma$ should be, we consider ``base-$F$" expansions of the form
$$[x_0\cdots x_m]_F:=x_0+Fx_1+\cdots F^mx_m$$
where the $x_i$ are ``digits" from a fixed finite subset $\Sigma\subseteq\Gamma$.
We call $\Sigma$ an {\em $F$-spanning set} if, among other more or less natural conditions, every element of $\Gamma$ can be written (possibly non-uniquely) in this way -- see Definition~\ref{spanning} below for details.
Section~\ref{section-spanning} is dedicated to studying properties of $F$-spanning sets, and deriving sufficient criteria for their existence.
Proposition~\ref{height} says that if $\Gamma$ admits an appropriate height function then at least an $F^r$-spanning set exists for some $r>0$.
In particular, this is the case when $F$ is the Frobenius endomorphism on a commutative algebraic group $G$ over a finite field and $\Gamma$ is an $F$-invariant finitely generated subgroup of~$G$.

Once we have an $F$-spanning set, $\Sigma$, we can define a subset of $\Gamma$ to be {\em $F$-automatic} if it is of the form $\{[w]_F:w\in\mathcal L\}$ for some regular language $\mathcal L\subseteq \Sigma^*$.
Here $\Sigma^*$ denotes the set of finite strings on $\Sigma$, and a regular language is a subset that is the set of words accepted by some finite automaton.
$F$-automaticity is explored in Section~\ref{section-genfset}, and it is shown to be independent of the $F$-spanning set $\Sigma$.
Moreover, one can work under the weaker hypothesis that an $F^r$-spanning set exists for some $r>0$.
The main theorem here is that under some mild conditions (satisfied, for example, by the Frobenius action on commutative algebraic groups over a finite field), $F$-sets are $F$-automatic.
This is Theorem~\ref{generalfsetfauto} below.
Putting this together with~\cite{fsets} we get that, when $G$ is semiabelian, if $X\subseteq G$ is a closed subvariety then $X\cap \Gamma$ is $F$-automatic.

Toward a more precise automata-theoretic explanation of $F$-sets we extend Derksen's notion of {\em $p$-normality} to this abstract setting in Section~\ref{section-fnormal}.
This is done using the notion of a sparse language, see Proposition~\ref{sparse} for various formulations of sparseness.
Theorem~\ref{fset=fnorm} says that the $F$-sets are of the form $\gamma+T+H$ where $\gamma\in \Gamma$, $H$ is an $F$-invariant subgroup, and $T=\{[w]_{F^r}:w\in\mathcal L\}$  for a sparse regular language $\mathcal L$ on an $F^r$-spanning set.

It is important to note that Derksen~\cite{derksen} is able to effectively determine the vanishing sets of linear recurrences, while no such effectivity can be deduced directly from~\cite{fsets}.
Given the finite state machine explanation of $F$-sets developed here, one can hope for an effective version of the isotrivial positive characteristic Mordell-Lang theorem.
This will be the subject of a subsequent paper.

\bigskip
\section{Isotrivial Mordell-Lang in Characteristic $p$}
\label{iml}

\noindent
The Mordell-Lang theorem of Faltings in characteristic zero, as it appears in~\cite{faltings}, says that the intersection of a closed subvariety $X$ of a semiabelian variety $G$ with a finitely generated subgroup $\Gamma$ is a finite union of cosets of subgroups of $\Gamma$.
Since the Zariski closure of a subgroup of $\Gamma$ is an algebraic subgroup of $G$, one can see this theorem as saying that if $X\cap\Gamma$ is infinite it is because of a positive dimensional algebraic subgroup of $G$ that is contained in a translate of $X$.
It is not difficult to find counterexamples in characteristic $p$, where the Frobenius endomorphism on semiabelian varieties over finite fields (that is, {\em isotrivial} semiabelian varieties) can be another source for the infinitude of $X\cap \Gamma$.
Hrushovski salvaged the situation in~\cite{udi} by proving a function field version in positive characteristic that treats the isotrivial case as exceptional.

The second author and Scanlon dealt with this exceptional case in~\cite{fsets}, describing $X\cap \Gamma$ as an ``$F$-set" -- something we now define in an abstract setting.

\begin{definition}[$F$-sets]
\label{defnfsets}
Suppose $\Gamma$ is a finitely generated abelian group equipped with an endomorphism $F$.
An {\em $F$-subset} of $\Gamma$ is defined to be a finite union of finite set sums of
\begin{itemize}
\item
singletons in $\Gamma$,
\item
$F$-invariant subgroups of $\Gamma$, and
\item
{\em $F$-cycles} in $\Gamma$; namely, sets of the form
$$C(\gamma;\delta):=\{\gamma+F^\delta\gamma+F^{2\delta}\gamma+\cdots+F^{\ell\delta}\gamma:\ell<\omega\}$$
where $\gamma\in \Gamma$ and $\delta$ is a positive integer.
\end{itemize}
Here, and throughout this paper, $\omega$ denotes the least infinite ordinal; the set of all natural numbers including zero.
\end{definition}

While much of~\cite{fsets} is dedicated to the combinatorics and first-order properties of $F$-sets, the main theorem of that paper is the following:

\begin{theorem}[Isotrivial Mordell-Lang~\cite{fsets}]
\label{ml}
Suppose $G$ is a semiabelian variety over a finite field $\mathbb F_q$ and $F:G\to G$ is the endomorphism induced by the $q$-power Frobenius map.
Suppose $X$ is a  closed subvariety of $G$ and $\Gamma$ is a finitely generated subgroup of $G(K)$ where $K$ is a finitely generated regular extension of $\mathbb F_q$, and that $\Gamma$ is preserved by~$F$.
Then $X\cap \Gamma$ is an $F$-subset of $\Gamma$.
\end{theorem}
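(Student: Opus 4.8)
The plan is to deduce this from Hrushovski's function field Mordell--Lang theorem in characteristic~$p$~\cite{udi}, the point being that in the \emph{isotrivial} case at hand --- where $G$ itself is defined over $\mathbb F_q$ --- that theorem does not by itself yield a coset decomposition but instead isolates the contribution of the $\mathbb F_q$-structure, and this contribution is exactly what the $F$-cycles of Definition~\ref{defnfsets} encode. We first dispose of some standard reductions. Decomposing $X$ into its finitely many irreducible components, and using that a finite union of $F$-subsets is an $F$-subset, we may assume $X$ is irreducible. If $X\cap\Gamma=\emptyset$ there is nothing to prove; otherwise, since singletons are among the summands allowed in Definition~\ref{defnfsets}, the class of $F$-subsets of $\Gamma$ is closed under translation by elements of $\Gamma$, so after translating by a point of $X\cap\Gamma$ we may assume $0\in X$. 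Finally, if $X$ is a translate of an algebraic subgroup $S\le G$ then $X\cap\Gamma$ is a translate of $S\cap\Gamma$; as $G$, and hence $S$, is defined over $\mathbb F_q$ we have $F(S)=S$, so $S\cap\Gamma$ is an $F$-invariant subgroup and $X\cap\Gamma$ is an $F$-subset. Thus we may assume $X$ is not a coset.

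The crucial input is the structure of $X\cap\Gamma$ that~\cite{udi} provides in this isotrivial situation. One works inside the model-theoretic setting underlying that proof --- a sufficiently saturated separably closed field of characteristic~$p$, with the Frobenius serving as a distinguished endomorphism --- in which $G$, being over $\mathbb F_q$, is a ``fixed'' object and $\Gamma$ is a finitely generated module over $\mathbb Z[F]$. Hrushovski's dichotomy then splits the description of $X\cap\Gamma$ into a ``modular'' part, which contributes translates of $F$-invariant subgroups of $\Gamma$, and an ``$\mathbb F_q$-internal'' part, built from the orbit structure of the Frobenius acting on the $\mathbb F_q$-subvarieties of $G$; the regularity of $K$ over $\mathbb F_q$ enters here to control the interaction of $\Gamma$ with $G(\bar{\mathbb F}_q)$. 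The outcome expected from this step is that $X\cap\Gamma$ is obtained from finitely many basic sets of these two kinds by finitely many applications of: translation by elements of $\Gamma$, finite union, finite intersection, and images and preimages along homomorphisms of finitely generated abelian groups that commute with the relevant endomorphisms (that is, $\mathbb Z[F]$-module homomorphisms).

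It then remains to verify two things. First, that the basic $\mathbb F_q$-internal sets are indeed $F$-subsets: unwinding the Frobenius orbit structure, such a set is, up to a translation and a $\mathbb Z[F]$-module homomorphism, of the form $\{(1+F^\delta+F^{2\delta}+\dots+F^{\ell\delta})\gamma : \ell<\omega\}$, which is precisely the $F$-cycle $C(\gamma;\delta)$. Second --- and this is the heart of the matter --- that the class of $F$-subsets of a finitely generated abelian group equipped with an endomorphism is closed under finite intersection and under images and preimages by $\mathbb Z[F]$-module homomorphisms. Closure under images and preimages is bookkeeping; closure under intersection is genuinely delicate, since it forces one to intersect, for instance, two $F$-cycles, or an $F$-cycle with a translate of an $F$-invariant subgroup, and to show that the result is again a finite union of singletons, $F$-invariant subgroups, and $F$-cycles. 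This rests on a careful combinatorial analysis of ``base-$F$'' expansions $x_0+Fx_1+\dots+F^mx_m$ and the carries that arise when one manipulates them.

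The main obstacle is exactly this closure of $F$-subsets under intersection, together with the attendant combinatorics of $F$-cycles; it is precisely the opacity of this step that motivates the automata-theoretic reformulation of the present paper, where closure under intersection becomes transparent via products of finite-state machines. A secondary difficulty is extracting from~\cite{udi} the precise isotrivial statement sketched above, and matching the geometric objects that occur there --- $\mathbb F_q$-traces, quasi-isogenies, sets internal to the fixed field --- with the purely combinatorial $\mathbb Z[F]$-module language in which $F$-subsets are phrased.
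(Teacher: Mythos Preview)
The paper does not prove Theorem~\ref{ml}; it is quoted as the main theorem of~\cite{fsets} and used here as a black-box input. There is nothing in the present paper to compare your proposal against.

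That said, your outline is a reasonable sketch of the strategy actually carried out in~\cite{fsets}: the argument there does proceed via Hrushovski's function-field Mordell--Lang, the modular/one-based dichotomy does separate the subgroup contributions from the Frobenius-orbit contributions, and the combinatorial heart of~\cite{fsets} is indeed the closure of the class of $F$-sets under finite boolean combinations and under images and preimages by $\mathbb Z[F]$-module homomorphisms, with intersection being the delicate step.

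One correction to your last paragraph: closure under intersection is \emph{not} what motivates the automata-theoretic reformulation of the present paper, and the automata here are not used to simplify or re-prove Theorem~\ref{ml}. The logical flow is the reverse. This paper takes Theorem~\ref{ml} --- and hence the entire combinatorial analysis of $F$-sets in~\cite{fsets}, including closure under intersection --- as already established, and then asks a further question: given that $X\cap\Gamma$ is an $F$-set, is it $F$-automatic (and $F$-normal)? The finite-state machines appear only downstream of Theorem~\ref{ml}, in Sections~\ref{section-genfset} and~\ref{section-fnormal}, to describe $F$-sets, not to prove that $X\cap\Gamma$ is one.
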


Some later refinements of this theorem include: \cite{ghioca} where the assumption that $\Gamma$ is preserved under $F$ is dropped and where a version for the additive group is formulated, \cite{ghiocamoosa} where it is extended to the case when $\Gamma$ is of finite rank (i.e., in the divisible hull of a finitely generated group), and \cite{fsets2} where it is combined with Hrushovski's theorem to give a general (not necessarily isotrivial) description of the structure of $X\cap \Gamma$ in characteristic $p$.

It may be worth remarking that the assumption in the theorem that $\Gamma\leq G(K)$ where $K/\mathbb F_q$ is regular, is not too harmful.
Indeed, we can always replace $\mathbb F_q$ by $K\cap\mathbb F_q^{\alg}=\mathbb F_{q^r}$ for some $r>0$ to ensure regularity.
So by replacing $F$ with $F^r$ we can still apply the theorem.
Now, replacing $F$ by $F^r$ does make a difference; for example it changes which subgroups are allowed in the definition of $F$-sets as not every $F^r$-invariant subgroup need be $F$-invariant.
However, at least at the level of $F$-cycles it makes no difference.
What we mean is that not only is every $F^r$-cycle an $F$-cycle by definition, but we even have a kind of converse under some mild conditions:

\begin{fact}
\label{rmult}
Suppose $\Gamma$ is a finitely generated abelian group and $F$ is an endomorphism of $\Gamma$ with the property that, for any $\delta>0$, $F^{\delta}-1$ is not a zero divisor in the subring of $\End(\Gamma)$ generated by $F$.
Then, for any $r>0$, any $F$-cycle of the form $C(\gamma;\delta)$ can be written as a finite union of translates of $F$-cycles of the form $C(\gamma';r\delta)$.

In particular, every $F$-cycle is a finite union of translates of $F^r$-cycles.
\end{fact}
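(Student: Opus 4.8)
The plan is to fix $\gamma\in\Gamma$, $\delta>0$ and $r>0$, and to analyse the finite orbit structure of the "partial sums" modulo $r$. Write $s_\ell:=\gamma+F^\delta\gamma+\cdots+F^{\ell\delta}\gamma$ for $\ell<\omega$, so that $C(\gamma;\delta)=\{s_\ell:\ell<\omega\}$. For each residue $i\in\{0,1,\dots,r-1\}$ consider the subsequence $(s_{rk+i})_{k<\omega}$. The key observation is that $s_{r(k+1)+i}=s_{rk+i}+F^{(rk+i+1)\delta}\bigl(\gamma+F^\delta\gamma+\cdots+F^{(r-1)\delta}\gamma\bigr)$. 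Setting $\gamma_i':=F^{(i+1)\delta}\bigl(\gamma+F^\delta\gamma+\cdots+F^{(r-1)\delta}\gamma\bigr)$ and noting that $F^{(rk+i+1)\delta}\gamma = F^{(i+1)\delta} F^{rk\delta}\gamma$, we get the recursion $s_{r(k+1)+i}=s_{rk+i}+F^{rk\delta}\gamma_i'$, which unwinds to $s_{rk+i}=s_i+\gamma_i'+F^{r\delta}\gamma_i'+\cdots+F^{(k-1)r\delta}\gamma_i'$ for $k\geq 1$. Thus $\{s_{rk+i}:k\geq 1\}=s_i+C(\gamma_i';r\delta)$, and the full cycle decomposes as
$$C(\gamma;\delta)=\{s_0,s_1,\dots,s_{r-1}\}\ \cup\ \bigcup_{i=0}^{r-1}\bigl(s_i+C(\gamma_i';r\delta)\bigr),$$
a finite union of singletons (which are themselves translates of the trivial $F^{r\delta}$-cycle $C(0;r\delta)=\{0\}$, or can be absorbed into the $C(\gamma_i';r\delta)$ terms) and translates of $F^{r\delta}$-cycles. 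Since every $C(\gamma';r\delta)$ is in particular an $F^r$-cycle $C(\gamma';(r\delta/r)\cdot r)$—wait, $r\delta$ is a multiple of $r$, so $C(\gamma';r\delta)$ is by definition an $(F^r)$-cycle with parameter $\delta$—the second assertion follows from the first by taking the given $F$-cycle $C(\gamma;\delta)$ and applying the first part.

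Actually the hypothesis that $F^\delta-1$ is not a zero divisor plays no role in the computation above; it is presumably needed elsewhere, or to guarantee that the $F^{r\delta}$-cycles produced are genuinely infinite (non-degenerate) when the original is. So the honest accounting is: the first displayed identity is a purely formal telescoping identity requiring no hypothesis on $F$; the non-zero-divisor hypothesis is invoked only if one wants to conclude something about the "shape" or infinitude of the pieces. I would state the telescoping identity as the crux, verify it by the two-line induction on $k$ indicated above, and then remark that the non-zero-divisor condition ensures the decomposition is into genuine (infinite) $F^{r\delta}$-cycles rather than collapsing—this matters because the definition of $F$-cycle as stated allows $C(\gamma;\delta)$ to be finite, but one may want to track that pathology.

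The main obstacle, such as it is, is purely bookkeeping: keeping the indices in the exponents of $F$ straight across the residue classes, and correctly handling the finitely many initial terms $s_0,\dots,s_{r-1}$ that fall outside the regular part of each arithmetic-progression subsequence. There is a minor subtlety in that $C(\gamma_i';r\delta)$ as I have written it starts its sum at $\gamma_i'$ (the $k=1$ term of the subsequence corresponds to the $\ell=0$ term of the cycle), so one must check the off-by-one alignment between "$s_{rk+i}$ for $k\geq 1$" and "$C(\gamma_i';r\delta)$". Once those indices are pinned down, the proof is complete; no deeper structural input about $\Gamma$, $F$, or $\End(\Gamma)$ is required for the stated conclusion.
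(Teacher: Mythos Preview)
Your argument is correct and takes a genuinely different route from the paper's. The paper first passes to an extension $\Gamma'\supseteq\Gamma$ in which there exists $\beta$ with $(F^\delta-1)\beta=\gamma$---this is where the non-zero-divisor hypothesis is invoked---so that $C(\gamma;\delta)$ becomes the translated $F^\delta$-orbit $-\beta+\{F^{(\ell+1)\delta}\beta:\ell<\omega\}$; it then splits this orbit into $r$ sub-orbits under $F^{r\delta}$ according to $\ell\bmod r$, and finally rewrites each translated sub-orbit back as a translated $F$-cycle in $\Gamma$ (citing Lemmas~2.7 and~2.9 of~\cite{fsets}). Your direct telescoping computation stays entirely inside $\Gamma$, avoids the extension, and as you correctly observe does not use the hypothesis on $F^\delta-1$ at all; the decomposition you write down is valid for any endomorphism $F$. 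What the paper's detour through $\Gamma'$ buys is a conceptual picture (cycles are translated orbits once one formally adjoins $(F^\delta-1)^{-1}\gamma$) that is exploited elsewhere in~\cite{fsets}; what your approach buys is a self-contained two-line induction with no auxiliary constructions and a strictly weaker hypothesis. Two small clean-ups: the singletons $\{s_0,\dots,s_{r-1}\}$ cannot in general be ``absorbed'' into the $s_i+C(\gamma_i';r\delta)$ terms, but your first explanation---that each $\{s_i\}=s_i+C(0;r\delta)$ is already a translate of an $F$-cycle of the required form---is the right one; and your parenthetical on the ``in particular'' clause is correct once untangled: $C(\gamma';r\delta)$ computed with respect to $F$ is literally $C(\gamma';\delta)$ computed with respect to $F^r$.
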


\begin{proof}
This is a slightly technical but not difficult argument that is done in~\cite{fsets}, but we give some details.
The idea is to first pass to an extension $\Gamma'\geq\Gamma$ where there is $\beta\in \Gamma'$ such that $F^\delta\beta-\beta=\gamma$.
This uses the assumption on $F$ that we have made, and is done in the proof of Lemma~2.7 of~\cite{fsets}.
It is pointed out there that then
$$C(\gamma;\delta)=-\beta+\{F^{\ell\delta}(F^{\delta}\beta):\ell<\omega\}.$$
That is, in $\Gamma'$, we have written $C(\gamma;\delta)$ as a translate of an $F^\delta$-orbit.
Next, one splits the orbit appearing in this expression into a finite union of orbits under $F^{r\delta}$ depending on $\ell$mod$r$.
That is,
$$C(\gamma;\delta)=\bigcup_{m=0}^{r-1}-\beta+\{F^{\ell r\delta}(F^{(m+1)\delta}\beta):\ell<\omega\}.$$
Finally one rewrites the new translated $F^{r\delta}$-orbits in $\Gamma'$ as $F$-cycles back in $\Gamma$ but now with respect to $r\delta$.
See the proof of Lemma~2.9 of~\cite{fsets} for details.
What one gets is that the translated orbit $-\beta+\{F^{\ell r\delta}(F^{(m+1)\delta}\beta):\ell<\omega\}$ is equal to the union of the singleton $\{-\beta+F^{(m+1)\delta}\beta\}$ together with the translated $F$-cycle
$$-\beta+F^{(m+1)\delta}\beta \ +\ C(F^{(r+m+1)\delta}\beta-F^{(m+1)\delta}\beta;r\delta).$$
One then observes that the parameters here are in fact all in $\Gamma$.
\end{proof}

\bigskip
\section{Deriving Skolem-Mahler-Lech}
\label{fsets-to-sml}
\noindent
In this section we point out that when specialised to the case of the integers equipped with the multiplication by $p$, the $F$-sets coincide precisely with Derksen's $p$-normal sets from~\cite{derksen}.
We then show how to deduce the characteristic $p$ Skolem-Mahler-Lech from Theorem~\ref{ml} above.

So consider now the very special case when $\Gamma=(\mathbb Z,0,+)$ and $F$ is multiplication by a positive integer $p$.
Note that every subgroup is preserved by $F$, so the $\mathbb Z[F]$-submodules that appear in the definition of $F$-sets are replaced now simply by subgroups.
One consequence of this is that, by Fact~\ref{rmult}, the $F$-subsets of $\mathbb Z$ coincide with the $F^r$-subsets, for any positive integer $r$.

This special case actually appears, though multiplicatively rather than additively, in the isotrivial Mordell-Lang context: take $G=\mathbb G_m^\mu$ a multiplicative torus in characteristic $p$ (now a prime number), $F$ the endomorphism of raising to the $p$th power, and $\Gamma$ the cyclic subgroup generated by some $\alpha=(\alpha_1,\dots,\alpha_\mu)\in\mathbb G_m^\mu(K)$ where $K$ is a field of characteristic~$p$ and not all the $\alpha_i$ are roots of unity.
As explained in the previous section, to meet the regularity hypothesis of Theorem~\ref{ml} we may have to replace $F$ by some $F^r$, but as we have just explained, this makes no difference in this case.

So what are the $F$-subsets of $\mathbb Z$?
We show, using some results from~\cite{fsets}, that they are nothing other than Derksen's $p$-normal sets, which we now recall.\footnote{In~\cite{derksen} $p$-normal sets are defined as subsets of $\mathbb N$, but the extension to $\mathbb Z$ that we presented here is the natural generalisation. We have also adjusted the notation slightly.}

\begin{definition}[$p$-normal sets]
\label{pnormal}
An {\em elementary $p$-nested} subset of the integers is a set of the form
$\tilde S_{p^\delta}(c_0;c_1,\dots,c_d)$
where $\delta$ is a positive integer, $c_0,\dots,c_d\in\mathbb Z$ with the property that $p^\delta-1$ divides $c_0+\cdots+c_d$, and
$$\tilde S_{p^\delta}(c_0;c_1,\dots,c_d):=\{\frac{c_0}{p^\delta-1}+p^{\ell_1\delta}\frac{c_1}{p^\delta-1}+\dots+p^{\ell_d\delta}\frac{c_d}{p^\delta-1}:\ell_1,\dots,\ell_d<\omega\}.$$
A {\em $p$-normal} subset of $\mathbb Z$ is then a set which up to a finite symmetric difference is a finite union of elementary $p$-nested subsets and cosets of subgroups of $\mathbb Z$.
\end{definition}

Note that as $p^\delta-1$ divides $c_0+\cdots+c_d$, we do have $\tilde S_{p^\delta}(c_0;c_1,\dots,c_d)\subseteq\mathbb Z$.

\begin{proposition}
\label{fset=pnorm}
Suppose $F:\mathbb Z\to\mathbb Z$ is multiplication by a positive integer~$p$.
Up to finite symmetric differences, the $F$-subsets of $\mathbb Z$ are exactly the $p$-normal subsets.
\end{proposition}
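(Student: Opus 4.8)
The plan is to prove the two inclusions separately, in each case reducing to a comparison of the basic building blocks. The first observation is that both notions are closed under finite union and under finite symmetric difference (the latter being explicit in the definition of $p$-normal, and easy for $F$-sets since a finite symmetric difference of an $F$-set can be absorbed by adding finitely many singletons). Since cosets of subgroups of $\mathbb Z$ appear literally in both definitions, the whole content is to match up $F$-cycles $C(\gamma;\delta)$ (finite sums thereof, translated by singletons) with elementary $p$-nested sets $\tilde S_{p^\delta}(c_0;c_1,\dots,c_d)$.

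For the direction ``$p$-normal $\Rightarrow$ $F$-set'', I would start from a single elementary $p$-nested set $\tilde S_{p^\delta}(c_0;c_1,\dots,c_d)$. Writing $a_i:=\frac{c_i}{p^\delta-1}\in\frac{1}{p^\delta-1}\mathbb Z$, the set is $\{a_0+p^{\ell_1\delta}a_1+\cdots+p^{\ell_d\delta}a_d:\ell_i<\omega\}$. The key algebraic point is that in $\frac{1}{p^\delta-1}\mathbb Z$ (equivalently, after passing to the overgroup $\Gamma'=\frac{1}{p^\delta-1}\mathbb Z\supseteq\mathbb Z$ as in the proof of Fact~\ref{rmult}) one has $p^{\ell\delta}a_i = a_i + (p^{\ell\delta}-1)a_i$, and $(p^{\ell\delta}-1)a_i = \frac{c_i}{p^\delta-1}\sum_{j=0}^{\ell-1}(p^{(j+1)\delta}-p^{j\delta}) \cdot\frac{p^\delta-1}{p^\delta-1}$; more cleanly, $p^{\ell\delta}a_i - a_i$ lies in $\mathbb Z$ and equals $c_i\frac{p^{\ell\delta}-1}{p^\delta-1} = c_i(1+p^\delta+\cdots+p^{(\ell-1)\delta})$, which is exactly a partial sum defining the $F$-cycle $C(p^\delta c_i;\delta)$ shifted appropriately. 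Thus each summand $p^{\ell_i\delta}a_i$ equals $a_i$ plus an element of a set closely related to $C(\cdot;\delta)$, and summing over $i$, the condition $p^\delta-1 \mid c_0+\cdots+c_d$ is precisely what forces the constant term $a_0+\cdots+a_d$ back into $\mathbb Z$. Carefully bookkeeping the $\ell_i=0$ cases (which contribute only finitely many extra points, absorbable as singletons) one rewrites $\tilde S_{p^\delta}(c_0;c_1,\dots,c_d)$ as a finite union of translates of finite sums of $F$-cycles $C(\gamma_i;\delta)$ plus finitely many singletons — hence an $F$-subset. I expect this computation, and in particular keeping track of which sums of parameters land in $\mathbb Z$, to be the main obstacle; it is essentially the reverse of the argument in Fact~\ref{rmult} and one should be able to cite Lemmas~2.7 and~2.9 of~\cite{fsets} rather than redo it from scratch.

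For the converse, ``$F$-set $\Rightarrow$ $p$-normal'', it suffices to show a finite sum of $F$-cycles, translated by a singleton, is $p$-normal; subgroups are handled by the coset clause and finite unions are automatic. Given $\gamma+C(\gamma_1;\delta_1)+\cdots+C(\gamma_d;\delta_d)$, I would first use Fact~\ref{rmult} — which applies since $F$ is multiplication by $p$ and $p^\delta-1\neq 0$ is not a zero divisor in $\mathbb Z = \mathbb Z[F]$ — to replace all the $\delta_i$ by a common multiple $\delta$, at the cost of more translates and more $F$-cycles but all now with the same exponent $\delta$. Then, reversing the above correspondence, for a single $F$-cycle $C(\beta;\delta)=\{\beta(1+p^\delta+\cdots+p^{\ell\delta}):\ell<\omega\} = \{\beta\frac{p^{(\ell+1)\delta}-1}{p^\delta-1}:\ell<\omega\}$, I set $c:=\beta(p^\delta-1)\cdot\frac{1}{p^\delta-1}$... more precisely write it, after translating by the constant $-\frac{\beta p^\delta}{p^\delta-1}+\frac{\beta}{p^\delta-1}$ type terms, in the shape $\tilde S_{p^\delta}(\,\cdot\,;\,\cdot\,)$; a finite sum of such becomes a single $\tilde S_{p^\delta}(c_0;c_1,\dots,c_d)$ by collecting constants, and the integrality of the original $F$-cycle forces the required divisibility $p^\delta-1\mid c_0+\cdots+c_d$. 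Adding the outer translate by $\gamma\in\mathbb Z$ just shifts $c_0$, which is harmless. This gives a finite union of elementary $p$-nested sets (and, absorbing finitely many boundary points, up to finite symmetric difference), i.e. a $p$-normal set. Again the bulk of the work is the elementary-but-fiddly translation dictionary between the two parametrisations, and I would lean on the corresponding computations already carried out in~\cite{fsets}.
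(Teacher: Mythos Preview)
Your approach is essentially the paper's: in both directions you use Fact~\ref{rmult} to equalize the $\delta_i$'s, an explicit identity between $C(k;\delta)$ and $\tilde S_{p^\delta}$, and (for $p$-normal $\Rightarrow$ $F$-set) passage to $\Gamma'=\frac{1}{p^\delta-1}\mathbb Z$ to recognise an elementary $p$-nested set as a translated sum of $F^\delta$-orbits, then invoke Lemma~2.9 of~\cite{fsets}.

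There is one gap in your reduction for the direction $F$-set $\Rightarrow$ $p$-normal. A general $F$-set building block has the form $\gamma + C(\gamma_1;\delta_1)+\cdots+C(\gamma_d;\delta_d)+H$ with the subgroup $H$ appearing as a \emph{summand}, not as a separate term in a union; so ``subgroups are handled by the coset clause'' does not dispose of it. The paper's fix is immediate: if $H\neq 0$ then $\mathbb Z/H$ is finite, so the entire set is already a finite union of cosets of $H$ and hence $p$-normal; only the case $H=0$ needs the $F$-cycle analysis. Also worth noting: once the $\delta_i$'s are equalized the paper gets an \emph{exact} identity
\[
k_0+\sum_{i=1}^d C(k_i;\delta)=\tilde S_{p^\delta}\Big(k_0(p^\delta-1)-\sum_i k_i;\ p^\delta k_1,\dots,p^\delta k_d\Big),
\]
so the ``$\ell_i=0$ boundary cases'' and finite-symmetric-difference adjustments you anticipate are not needed.
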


\begin{proof}
We first show that $F$-sets are $p$-normal.
Taking finite unions it suffices to show that an $F$-set of the form
$$U=k_0+C(k_1;\delta_1)+\cdots+C(k_d;\delta_d)+H$$
is $p$-normal, where $k_0,\dots,k_d\in\mathbb Z$, $\delta_1,\dots,\delta_d$ are positive integers, and $H$ is a subgroup of $\mathbb Z$.
In the case when $H$ is not trivial, since $\mathbb Z/H$ is finite, we get that $U$ is in fact a finite union of cosets of $H$ and hence is $p$-normal by definition.
We may therefore assume that $H=(0)$.
Moreover, by Fact~\ref{rmult}, after replacing each $\delta_i$ with $\delta:=\delta_1\cdots\delta_d$, and taking translates and finite unions, we may assume that all the $\delta_i$'s are the same.
So we have reduced to the case when
$$U=k_0+C(k_1;\delta)+\cdots+C(k_d;\delta).$$
Now,
\begin{eqnarray*}
C(k_i;\delta)
&=&
\{k_i+F^\delta k_i+\cdots+F^{\ell\delta}k_i:\ell<\omega\}\\
&=&
\{k_i+p^\delta k_i+\cdots+p^{\ell\delta}k_i:\ell<\omega\}\\
&=&
\left\{\frac{(p^{\delta} -1)(k_i+p^{\delta} k_i+\cdots+p^{\ell \delta}k_i)}{p^{\delta}-1}:\ell<\omega\right\}\\
&=&
\left\{\frac{-k_i+p^{(\ell+1)\delta}k_i}{p^{\delta}-1}:\ell<\omega\right\}\\
&=&
\left\{\frac{-k_i}{p^{\delta}-1}+p^{\ell \delta}\frac{p^{\delta} k_i}{p^{\delta}-1}:\ell<\omega\right\}\\
&=&
\tilde S_{p^{\delta}}(-k_i;p^{\delta} k_i).
\end{eqnarray*}
Hence,
\begin{eqnarray*}
U
&=&
k_0+C(k_1;\delta)+\cdots+C(k_d;\delta)\\
&=&
k_0+ \tilde S_{p^{\delta}}(-k_1;p^{\delta} k_1)+\cdots++ \tilde S_{p^{\delta}}(-k_d;p^{\delta} k_d)\\
&=&
\tilde S_{p^{\delta}}(k_0(p^{\delta}-1)-k_1-\cdots-k_d; p^{\delta} k_1,\dots,p^{\delta} k_d).
\end{eqnarray*}
Note that the sum of the parameters in this expression is divisible by $p^{\delta}-1$ as required in the definition of an elementary $p$-nested set.
We have shown that $U$ is an elementary $p$-nested set.
Hence $F$-sets are $p$-normal.

For the converse, assume that $U$ is a $p$-normal subset of $\mathbb Z$.
After replacing $U$ with something of finite symmetric difference with it, and taking finite unions, we may assume that $U$ is either a coset of a subgroup or an elementary $p$-nested set.
The former are $F$-subsets by definition, and so it suffices to consider elementary $p$-nested sets $U=\tilde S_{p^\delta}(c_0;c_1,\dots,c_d)$.
Working in the extension $\Gamma':=\frac{1}{p^\delta-1}\mathbb Z$, note that $\tilde S_{p^\delta}(c_0;c_1,\dots,c_d)$ is a translate (by $\frac{c_0}{p^\delta-1}$) of a sum of orbits (of the $\frac{c_i}{p^\delta-1}$) under the action of multiplication by $p^\delta$.
That is, $U$ is a translate of a sum of $F^\delta$-orbits in $\Gamma'$.
Now, by Lemma~2.9 of~\cite{fsets}, and indeed by the arguments we saw in the proof of Fact~\ref{rmult} above, every translate of a sum of $F^\delta$-orbits in $\Gamma'$ that happens to lie in an $F$-invariant subgroup (in this case $\mathbb Z$) is a finite union of translates of sums of $F$-cycles of that subgroup.
Hence $U$ is a finite union of translates of sums of $F$-cycles in $\mathbb Z$, and hence is an $F$-set.
\end{proof}

We can now deduce the positive characteristic Skolem-Mahler-Lech theorem from the isotrivial Mordell-Lang.
This theorem was proved by Derksen in~\cite{derksen} a few years after Theorem~\ref{ml} appeared in~\cite{fsets}, but by entirely different methods.
Moreover, Derksen obtains an effectivity result that does not follow from the methods in~\cite{fsets}.

The original Skolem-Mahler-Lech in characteristic zero, and indeed Derksen's version in positive characteristic, has to do with $\mathbb N$-indexed sequences satisfying a linear recurrence.
For our purposes it is more convenient to talk about groups and so we will formulate it for $\mathbb Z$-indexed sequences.
That is, we have a field $K$ and a sequence $a=(a_n:n\in\mathbb Z)$ from $K$ such that for some positive integer $d$, and constants $c_0,\dots,c_{d-1}\in K$ we have
$$a_{n+d}=c_{d-1}a_{n+d-1}+\cdots+c_0a_n$$
for all $n\in\mathbb Z$.
Note that if we started with an $\mathbb N$-indexed sequence, then, after cutting off some finite initial segment, we could assume that $c_0\neq 0$, and hence we could run the linear recurrence backwards to produce a $\mathbb Z$-indexed sequence satisfying the same linear recurrence.
In characteristic zero the Skolem-Mahler-Lech theorem says that the set of indices in an $\mathbb N$-indexed linear recurrence sequence where the sequence vanishes is a finite union of arithmetic progressions.
The version for $\mathbb Z$-indexed linear recurrence sequences is that it is a finite union of cosets of subgroups of $\mathbb Z$ (see, for example, \cite{Zsequence, Zsequencecorr}, which give a proof of a more general result in this direction).
This fails in positive characteristics.
It is salvaged by:

\begin{corollary}[Skolem-Mahler-Lech]
Suppose $K$ is a field of characteristic $p$ and $a=(a_n:n\in\mathbb Z)$ is a linear recurrence sequence from $K$.
Then the vanishing set,
$$Z(a):=\{n\in\mathbb Z: a_n=0\}$$
is $p$-normal.
\end{corollary}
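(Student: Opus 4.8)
The plan is to reduce the problem, after splitting into residue classes, to the torus case of the isotrivial Mordell--Lang theorem and then to pass through Proposition~\ref{fset=pnorm}. First, some standard reductions. After lowering the order of the recurrence if necessary (or disposing of the trivial case $a\equiv 0$), we may assume that the characteristic polynomial $f(x)=x^d-c_{d-1}x^{d-1}-\cdots-c_0$ has $c_0\neq 0$, so that its distinct roots $\alpha_1,\dots,\alpha_s$, of multiplicities $e_1,\dots,e_s$, are all nonzero. Only finitely many elements of $K$ occur among the $c_i$ and the initial values $a_0,\dots,a_{d-1}$, so we may assume $K$ is finitely generated over $\mathbb F_p$; enlarging $K$ by a finite extension we may also assume it contains the $\alpha_i$. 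Put $e=\max_i e_i$ and fix $t$ with $p^t\ge e$.

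From the Jordan form of the companion matrix over $K$ one obtains the generalised power sum expansion $a_n=\sum_i\sum_{0\le k<e_i}\binom{n}{k}\,\alpha_i^{\,n-k}\,w_{ik}$, valid for all $n\in\mathbb Z$, with $w_{ik}\in K$ and with the binomial coefficients understood as integers. The key observation is a Lucas-type congruence: for $0\le k<p^t$ and $0\le j<p^t$ one has $\binom{p^t m+j}{k}\equiv\binom{j}{k}\pmod p$ for every $m\in\mathbb Z$. Consequently, on the residue class $n\equiv j\pmod{p^t}$, reindexed by $m=(n-j)/p^t$, the sequence becomes a \emph{constant-coefficient} power sum $a_{p^t m+j}=\sum_i c_{ij}\,\beta_i^{\,m}$, where $\beta_i:=\alpha_i^{p^t}\in K$ are again distinct and nonzero and $c_{ij}\in K$.

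Now fix $j$ and discard the indices $i$ with $c_{ij}=0$; let $\gamma_j$ be the resulting tuple $(\beta_i)_i$, regarded as a point of $\mathbb G_m^{\mu_j}(K)$, let $\Gamma_j=\langle\gamma_j\rangle$, and let $X_j\subseteq\mathbb G_m^{\mu_j}$ be the closed subvariety defined by $\sum_i c_{ij}x_i=0$. Then $\{m\in\mathbb Z:a_{p^t m+j}=0\}=\phi_j^{-1}(X_j\cap\Gamma_j)$, where $\phi_j\colon\mathbb Z\to\Gamma_j$ is $m\mapsto\gamma_j^{\,m}$. Taking $F$ to be the $q$-power Frobenius, where $\mathbb F_q=K\cap\mathbb F_p^{\alg}$ as in the remark following Theorem~\ref{ml}, the extension $K/\mathbb F_q$ is regular, $\mathbb G_m^{\mu_j}$ is defined over $\mathbb F_q$, and $F(\gamma_j)=\gamma_j^{\,q}\in\Gamma_j$; thus $\Gamma_j$ is a finitely generated $F$-invariant subgroup and Theorem~\ref{ml} gives that $X_j\cap\Gamma_j$ is an $F$-subset of $\Gamma_j$. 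If $\gamma_j$ has finite order then $X_j\cap\Gamma_j$ is periodic and $\phi_j^{-1}(X_j\cap\Gamma_j)$ is a finite union of cosets of subgroups of $\mathbb Z$, hence $p$-normal. Otherwise $\phi_j$ is an isomorphism carrying $(\mathbb Z,\times q)$ onto $(\Gamma_j,F)$ (it intertwines multiplication by $q$ with $F$ since $F(\gamma_j)=\gamma_j^{\,q}$), so it sends the $F$-subset $X_j\cap\Gamma_j$ to a $(\times q)$-subset of $\mathbb Z$, which by Proposition~\ref{fset=pnorm} is $q$-normal; and $q$-normality coincides with $p$-normality because, by Fact~\ref{rmult}, the $(\times q)$-subsets and $(\times p)$-subsets of $\mathbb Z$ agree. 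In either case $\{m\in\mathbb Z:a_{p^t m+j}=0\}$ is $p$-normal.

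Finally, $Z(a)=\bigcup_{j=0}^{p^t-1}\bigl(p^t\cdot\{m\in\mathbb Z:a_{p^t m+j}=0\}+j\bigr)$. Since the $p$-normal subsets of $\mathbb Z$ are closed under translation and under multiplication by $p^t$ --- which is immediate from the explicit shape of elementary $p$-nested sets and of cosets, using that $\gcd(p^t,p^\delta-1)=1$ --- each term on the right is $p$-normal, and therefore so is the finite union $Z(a)$. The hard part will be the middle step: establishing the Lucas-type congruence and thereby collapsing the ``Jordan block'' polynomial contributions into constants after passing to residue classes modulo a sufficiently large power of $p$, which is what reduces a general linear recurrence to the distinct-root, constant-coefficient case handled by the torus instance of Theorem~\ref{ml}. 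The rest is bookkeeping about fields of definition and elementary closure properties of $p$-normal sets.
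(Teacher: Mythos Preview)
Your proof is correct and follows essentially the same approach as the paper: reduce to simple (distinct-root, constant-coefficient) power sums, apply the torus case of Theorem~\ref{ml}, and then conclude via Proposition~\ref{fset=pnorm}. The only difference is that you spell out the reduction explicitly---Jordan form plus the Lucas-type congruence on residue classes modulo $p^t$, followed by reassembly using the easy closure of $p$-normal sets under translation and dilation by $p^t$---whereas the paper cites that reduction as a black box from the Preliminaries section of~\cite{derksen}.
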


\begin{proof}
Using positive characteristic it is relatively straightforward to show that after writing $a$ as a finite union of linear recurrences, we may assume that we have the following closed form formula: for all $n\in \mathbb Z$,
$$a_n=\sum_{i=0}^\mu b_i\alpha_i^n$$
for some positive integer $\mu$ and some nonzero $b_1,\dots,b_\mu,\alpha_1,\dots,\alpha_\mu\in K^{\alg}$.
A detailed proof of this can be found in the Preliminaries section of~\cite{derksen}.

Let $\alpha=(\alpha_1,\dots,\alpha_\mu)$ and $L=K(\alpha)$.
Now we can pose the Skolem-Mahler-Lech problem as an isotrivial Mordell-Lang problem by setting $G=\mathbb G_m^\mu$, $\Gamma=\langle\alpha\rangle\leq G\big(L)$, and $X\subseteq G$ the closed subvariety defined by the linear equation $b_1x_1+\cdots+b_\mu x_\mu=0$.
Indeed, it follows that
$$\alpha^{Z(a)}:=\{\alpha^n:n\in Z(a)\}=X\cap\Gamma.$$
Applying Theorem~\ref{ml}, since $L$ is regular over $L\cap \mathbb F_p^{\alg}=\mathbb F_q$, if we let $F:G\to G$ be the $p$-power Frobenius endomorphism, then we get that $\alpha^{Z(a)}$ is an $F^q$-subset of~$\Gamma$.
Writing things additively (and since we may as well assume that $\Gamma$ is infinite), we have that $Z(a)$ is an $F^q$-subset of $\mathbb Z$, where $F:\mathbb Z\to\mathbb Z$ is now multiplication by $p$.
As discussed before, $F$-sets and $F^q$-sets coincide when $F$ is just multiplication by a positive integer.
So $Z(a)$ is an $F$-subset of $\mathbb Z$.
By Proposition~\ref{fset=pnorm}, $Z(a)$ is $p$-normal as desired.
\end{proof}

\bigskip
\section{When $F$ is a number, $F$-sets are $F$-automatic}
\label{section-fset-fauto}
\noindent
On the way to proving the $p$-normality of $Z(a)$, Derksen first shows that $Z(a)$ is $p$-automatic.
This leads us to ask what kind of automaticity can be deduced about $F$-sets in general, meaning when $\Gamma$ is not necessarily cyclic.
The question only makes sense when ``$F$-automaticity" makes sense, and as it stands this is only defined in the literature when $F$ is (i.e., acts as multiplication by) a number.
We therefore focus on that case for now.

Suppose $F$ is an integer bigger than $1$.
Recall that a subset $S$ of $\mathbb N$ is {\em $F$-automatic} if there is a finite automaton $\mathcal A$ which takes as inputs words on the alphabet $\{0,1,\dots,F-1\}$, such that a word is accepted by $\mathcal A$ if and only if it is a base-$F$ expansion of an element of $S$.
We point the reader to~\cite{shallit} for precision and detail.
Much of the theory has natural generalisations to subsets of $\mathbb Z$ and even $\mathbb Z^d$.
So, for example, a subset $S\subseteq \mathbb Z^d$ is {\em $F$-automatic} if there is a ($d$-dimensional) finite automaton $\mathcal A$ which takes as inputs $d$-tuples of words on the alphabet $\{+,-,0,1,\dots,F-1\}$ where each word in the tuple is of the same length, such that a $d$-tuple of words is accepted by $\mathcal A$ if and only if they form the base-$F$ expansions of an element of~$S$.
Automatic subsets of arbitrary finitely generated abelian groups were introduced in~$\S$5 of~\cite{adambell}, where the basic theory was also developed.
That is the context in which we will work.
We do not give the definition, but rather make use of the following:

\begin{fact}[Proposition~5.4 of~\cite{adambell}]
\label{preauto}
Suppose $\Gamma$ is a finitely generated abelian group and $f:\mathbb Z^d\to \Gamma$ is a surjective homomorphism.
Then $S\subseteq\Gamma$ is $F$-automatic in $\Gamma$ if and only if $f^{-1}(S)$ is $F$-automatic in $\mathbb Z^d$.
\end{fact}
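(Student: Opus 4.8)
My plan is to reduce the statement to closure properties of $F$-automatic subsets of \emph{free} abelian groups. Recall that, as set up in~\S5 of~\cite{adambell}, a subset $S$ of a finitely generated abelian group $\Gamma$ is declared $F$-automatic by fixing one surjective homomorphism $g\colon\mathbb{Z}^m\to\Gamma$ and asking that $g^{-1}(S)\subseteq\mathbb{Z}^m$ be $F$-automatic; so what is really at issue is that this notion does not depend on the chosen presentation. The closure properties I would take as known are the following, all for subsets of $\mathbb{Z}^n$: the whole group and every subgroup are $F$-automatic, and $F$-automaticity is preserved under finite intersections and under images and preimages along group homomorphisms $\mathbb{Z}^n\to\mathbb{Z}^{n'}$. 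These are standard --- on the combinatorial side they are part of the basic theory developed in~\cite{adambell} and~\cite{shallit}, and on the logical side they are immediate from the characterization of $F$-automatic subsets of $\mathbb{Z}^n$ as the sets first-order definable in Presburger arithmetic expanded by the base-$F$ valuation function.

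First I would handle the case $\Gamma=\mathbb{Z}^n$, so that $f\colon\mathbb{Z}^d\to\mathbb{Z}^n$ is a surjection of free groups. One direction is immediate: if $S$ is $F$-automatic then so is $f^{-1}(S)$, being the preimage of an $F$-automatic set along a homomorphism. For the converse, suppose $f^{-1}(S)$ is $F$-automatic. Since $\mathbb{Z}^n$ is free, $f$ admits a section $s\colon\mathbb{Z}^n\to\mathbb{Z}^d$ with $f\circ s=\id$, and a short check shows $s(S)=s(\mathbb{Z}^n)\cap f^{-1}(S)$ (if $x=s(y)$ lies in $f^{-1}(S)$ then $y=f(s(y))\in S$, and both inclusions are now clear). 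Thus $s(S)$ is $F$-automatic, as the intersection of the subgroup $s(\mathbb{Z}^n)$ of $\mathbb{Z}^d$ with $f^{-1}(S)$, and hence $S=f(s(S))$ is $F$-automatic as the image of an $F$-automatic set along a homomorphism. This settles the free case.

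Next I would pass to a general finitely generated abelian $\Gamma$ by comparing an arbitrary surjection $f\colon\mathbb{Z}^d\to\Gamma$ with the fixed one $g\colon\mathbb{Z}^m\to\Gamma$ through their fibre product
$$N:=\{(x,y)\in\mathbb{Z}^d\oplus\mathbb{Z}^m:f(x)=g(y)\}.$$
As a subgroup of $\mathbb{Z}^{d+m}$ this is finitely generated and free, say via an isomorphism $\iota\colon\mathbb{Z}^k\to N$, and the projections $p\colon N\to\mathbb{Z}^d$ and $q\colon N\to\mathbb{Z}^m$ are both surjective (surjectivity of $g$, resp.\ $f$, lets one lift a given element of $\mathbb{Z}^d$, resp.\ $\mathbb{Z}^m$, into $N$), with $f\circ p=g\circ q$. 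Applying the free case to the surjections $p\circ\iota\colon\mathbb{Z}^k\to\mathbb{Z}^d$ and $q\circ\iota\colon\mathbb{Z}^k\to\mathbb{Z}^m$ gives that $f^{-1}(S)$ is $F$-automatic iff $(p\circ\iota)^{-1}(f^{-1}(S))$ is, and that $g^{-1}(S)$ is $F$-automatic iff $(q\circ\iota)^{-1}(g^{-1}(S))$ is. But $f\circ p\circ\iota=g\circ q\circ\iota$ forces $(p\circ\iota)^{-1}(f^{-1}(S))=(q\circ\iota)^{-1}(g^{-1}(S))$, so these two conditions are equivalent; since $S$ being $F$-automatic in $\Gamma$ means precisely that $g^{-1}(S)$ is $F$-automatic, this is the claim.

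The only genuine input is the list of closure properties quoted in the first paragraph; once those are available everything else is formal diagram-chasing with free abelian groups and their sections. So the main --- in fact the only --- obstacle is having that basic theory of $F$-automatic subsets of $\mathbb{Z}^n$ in hand, which is exactly why in~\cite{adambell} the present statement appears as Proposition~5.4, after that theory has been developed.
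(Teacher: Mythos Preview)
The paper does not supply its own proof of this statement: it is quoted as a \emph{Fact}, with the attribution ``Proposition~5.4 of~\cite{adambell}'', and no proof environment follows. So there is nothing in the present paper to compare your argument against.

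Your argument itself is sound. The free case is handled cleanly via a section and the identity $s(S)=s(\mathbb Z^n)\cap f^{-1}(S)$, and the fibre-product reduction to the free case is correct and standard. The only caveat is that the entire weight rests on the closure properties you invoke for $F$-automatic subsets of $\mathbb Z^n$ (subgroups are automatic; automaticity is preserved under finite intersection and under images and preimages along $\mathbb Z$-linear maps). These are indeed established in~\cite{adambell} prior to their Proposition~5.4, and also follow from the B\"uchi--Bruy\`ere definability characterisation you allude to, so your reliance on them is legitimate. In short: your proof is correct, and since the present paper gives no proof of its own, no meaningful comparison of approaches is possible here.
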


\begin{theorem}
\label{fset-fauto}
Suppose $\Gamma$ is a finitely generated abelian group and $F>1$ is an integer acting by multiplication.
Then every $F$-subset of $\Gamma$ is $F$-automatic.
\end{theorem}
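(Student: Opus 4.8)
The plan is to reduce to the case $\Gamma=\mathbb Z^d$ using Fact~\ref{preauto}, and then to exhibit each $F$-subset of $\mathbb Z^d$ as the image of a manifestly $F$-automatic set under a $\mathbb Z$-affine map. Throughout I would use the standard closure properties of $F$-automatic subsets of $\mathbb Z^d$ (closure under finite unions and intersections, complements, Cartesian products, permutation of coordinates, and projection), together with the fact that every subset of $\mathbb Z^d$ cut out by finitely many $\mathbb Z$-linear equations and congruences — in particular the graph of any $\mathbb Z$-affine map and any subgroup — is $F$-automatic; all of this is part of the basic theory developed in~\cite{adambell} and~\cite{shallit}.

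For the reduction, fix a surjective homomorphism $f\colon\mathbb Z^d\to\Gamma$ with kernel $\Lambda$. Since $F$-automatic sets are closed under finite unions, by Definition~\ref{defnfsets} it is enough to treat a single set sum
$$W=\{\gamma_0\}+H_1+\cdots+H_s+C(\gamma_1;\delta_1)+\cdots+C(\gamma_n;\delta_n)$$
with $\gamma_i\in\Gamma$, each $H_i\leq\Gamma$, and each $\delta_j>0$. Choosing lifts $\tilde\gamma_i\in\mathbb Z^d$ of the $\gamma_i$ and using that $f$ commutes with multiplication by $F$, I would verify that
$$f^{-1}(W)=\{\tilde\gamma_0\}+f^{-1}(H_1)+\cdots+f^{-1}(H_s)+C(\tilde\gamma_1;\delta_1)+\cdots+C(\tilde\gamma_n;\delta_n)+\Lambda,$$
the only nonformal point being the inclusion $f^{-1}(W)\subseteq(\text{right-hand side})$: given $x$ with $f(x)\in W$, lift a chosen decomposition of $f(x)$ summand by summand and absorb the resulting $\Lambda$-error into one of the displayed summands. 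Since $F$ acts on $\mathbb Z^d$ by multiplication, every subgroup of $\mathbb Z^d$ is $F$-invariant, so the right-hand side is an $F$-subset of $\mathbb Z^d$, and Fact~\ref{preauto} reduces the theorem to $\Gamma=\mathbb Z^d$.

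Working now in $\mathbb Z^d$, the subring of $\End(\mathbb Z^d)$ generated by $F$ is just $\mathbb Z$, in which $F^\delta-1\neq 0$ is never a zero divisor, so Fact~\ref{rmult} lets me replace each $\delta_j$ by a common multiple $\delta$ of $\delta_1,\dots,\delta_n$ at the cost of passing to a finite union of translates; distributing those unions across the set sum and absorbing translations into the singleton summand, I am reduced to a set $W'=k_0+H+C(k_1;\delta)+\cdots+C(k_n;\delta)$ with $k_i\in\mathbb Z^d$ and $H\leq\mathbb Z^d$. The key observation is that $C(k_j;\delta)=\{m\,k_j:m\in M_\delta\}$, where $M_\delta:=\{1+F^\delta+\cdots+F^{\ell\delta}:\ell<\omega\}\subseteq\mathbb Z$ is $F$-automatic because the base-$F$ representations of its elements form the regular language $1\,(0^{\delta-1}1)^*$. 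Fixing a $\mathbb Z$-linear surjection $g\colon\mathbb Z^e\to H$, one then has
$$W'=\big\{k_0+m_1k_1+\cdots+m_nk_n+g(y):m_1,\dots,m_n\in M_\delta,\ y\in\mathbb Z^e\big\},$$
so $W'$ is the image of the $F$-automatic set $M_\delta^n\times\mathbb Z^e\subseteq\mathbb Z^{n+e}$ under the $\mathbb Z$-affine map $\phi(m_1,\dots,m_n,y)=k_0+\sum_j m_jk_j+g(y)$. As the graph of $\phi$ is cut out by $\mathbb Z$-linear equations and hence $F$-automatic in $\mathbb Z^{(n+e)+d}$, and as $F$-automatic relations are closed under intersection with cylinders and under projection, $W'$ is $F$-automatic; a final finite union completes the argument.

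I expect the main obstacle to be exactly the tension between $F$-automaticity and the set-sum operation: automatic sets are not closed under arbitrary sumsets, so one cannot build $W$ directly out of its summands. What rescues the argument is that an $F$-cycle is not an arbitrary set but the set $\{m\,k:m\in M_\delta\}$ of scalar multiples of a single vector by a one-dimensional automatic set, which lets all $n$ cycles and the subgroup be assembled through a single $\mathbb Z$-affine map rather than through iterated sumsets. A secondary point requiring care is that $f^{-1}$ does not commute with set sums, which is why the reduction to $\mathbb Z^d$ must lift decompositions summand by summand rather than sets wholesale.
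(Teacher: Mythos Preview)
Your proof is correct and follows the same overall strategy as the paper: reduce to $\mathbb Z^d$ via Fact~\ref{preauto}, then exploit the standard closure properties of classical $F$-automatic sets. The packaging differs in one respect worth noting. The paper does not assemble the whole $F$-set through a single affine map; instead it invokes directly that \emph{finite set sums of $F$-automatic subsets of $\mathbb Z^d$ are $F$-automatic} (citing~\cite[Theorem~5.6.3(d)]{shallit}), and then checks separately that singletons, subgroups, and individual $F$-cycles $C(\gamma;\delta)=C(1;\delta)\cdot\gamma$ are $F$-automatic. So the ``tension'' you anticipate in your final paragraph is not actually there: in the classical $\mathbb Z^d$ setting, $F$-automatic sets \emph{are} closed under sumsets, precisely because the graph of addition is $F$-automatic and projections preserve automaticity---which is exactly the mechanism underlying your affine-image argument. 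Your route via a single $\mathbb Z$-affine map $\phi$ is thus a mild repackaging of the same closure fact rather than a genuine detour around it; it has the pleasant feature of handling the subgroup and all the cycles in one stroke, at the cost of invoking Fact~\ref{rmult} to synchronise the $\delta_j$ (a step the paper does not need, and which your own argument could also dispense with, since $M_{\delta_1}\times\cdots\times M_{\delta_n}\times\mathbb Z^e$ is already $F$-automatic). Your explicit verification of the identity for $f^{-1}(W)$ is more careful than the paper's, which simply asserts that preimages of $F$-sets are $F$-sets.
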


\begin{proof}
Fixing a surjective group homomorphism $f:\mathbb Z^d\to \Gamma$, Fact~\ref{preauto} tells us that a subset $S\subseteq\Gamma$ is $F$-automatic if and only if $f^{-1}(S)$ is.
On the other hand, it is more or less immediate from the definitions that $S\subseteq\Gamma$ is an $F$-set if and only if $f^{-1}(S)$ is.
This reduces us to the case when $\Gamma=\mathbb Z^d$.

Finite  set sums of $F$-automatic sets are $F$-automatic -- for a proof in the case of~$\mathbb N$ see~\cite[Theorem~5.6.3(d)]{shallit}, which extends easily
to $\mathbb Z^d$.
So it suffices to show that singletons, subgroups, and $F$-cycles in $\mathbb Z^d$ are $F$-automatic.

It is clear how to build an automaton that recognises any given singleton.

Subgroups are also $F$-automatic.
To see this,
 given $H\leq\mathbb Z^d$, choose generators $\lambda_1,\dots,\lambda_\ell$ for $\mathbb Z^d$ such that for some $0\leq n\leq m\leq\ell$,
\begin{itemize}
\item
$\lambda_1,\dots,\lambda_n$ generate $H$
\item
$\mathbb Z^d/H=\mathbb Z\overline{\lambda}_{n+1}\oplus\cdots\oplus\mathbb Z\overline{\lambda}_\ell$
\item
$\overline\lambda_{n+1},\dots,\overline\lambda_m$ are nontorsion, and 
\item
$\overline\lambda_{m+1},\dots,\overline\lambda_\ell$ are nonzero torsion.
\end{itemize}
It follows that
$$\displaystyle a_1\lambda_1+\cdots+a_\ell\lambda_\ell\in H \iff \bigwedge_{i=n+1}^ma_i=0 \ \ \wedge \bigwedge_{j=m+1}^\ell \ord(\lambda_j)|a_j$$
Applying Fact~\ref{preauto} with the surjection $f:\mathbb Z^\ell\to\mathbb Z^d$ induced by the choice of generators $(\lambda_1,\dots,\lambda_\ell)$, it suffices to see that the set of tuples $(a_1,\dots,a_\ell)$ satisfying the right-hand side of the above equivalence is an $F$-automatic subset of $\mathbb Z^\ell$.
It is not hard to prove that that is the case -- see~\cite[Theorem~5.4.2]{shallit} for a proof that $r\mathbb N$ is $F$-automatic in $\mathbb N$.

So it remains to show that $F$-cycles are $F$-automatic.
Consider therefore
$$C(\gamma;\delta)=\{\gamma+F^\delta\gamma+F^{2\delta}\gamma+\cdots+F^{\ell\delta}\gamma:\ell<\omega\}$$
where $\gamma\in \mathbb Z^d$ and $\delta$ is a positive integer.
As $F$ is just an integer acting by multiplication diagonally on $\mathbb Z^d$, we note that $C(\gamma;\delta)=C(1;\delta)\gamma$, where $C(1;\delta)$ is an $F$-cycle in $\mathbb Z$ and the action here of $\mathbb Z$ on $\mathbb Z^d$ is diagonal.
Now, the class of $F$-automatic subsets of $\mathbb N$ is preserved under multiplication by a natural number, see~\cite[Theorem~5.6.3(e)]{shallit}.
Another way of saying this is that the orbit of an $F$-automatic subset of $\mathbb N$ acting on $\mathbb N$ by multiplication is $F$-automatic.
The argument extends readily to show that the orbit of an $F$-automatic subset of $\mathbb Z$ under the diagonal action on $\mathbb Z^d$ is $F$-automatic.
Hence it suffices to prove that $C(1;\delta)$ is $F$-automatic in $\mathbb Z$.
In fact we have already shown in Proposition~\ref{fset=pnorm} that it is $F$-normal, and $F$-automaticity follows.
But in fact $F$-automaticity is much easier to see than that; the elements of $C(1;\delta)$ are exactly those whose base-$F$ expansions are words made up of $1$'s separated by blocks of $\delta$-many $0$'s -- and it is easy to produce the automaton that recognises such words.
\end{proof}

\bigskip
\section{$F$-spanning sets}
\label{section-spanning}
\noindent
We wish now, and for the rest of this paper, to drop the assumption that $F$ acts as multiplication by an integer.
So $F$ is an arbitrary endomorphism of the finitely generated abelian group $\Gamma$.
In order to introduce a notion of ``$F$-automaticity" in this general context we need a theory of expansions of elements of $\Gamma$ in ``base-$F$".
That is the goal of this section.

\begin{definition}[Spanning set]
\label{spanning}
We'll say that a finite subset $\Sigma $ of $\Gamma$ is an $F$-\emph{spanning set} if the following properties hold:
\begin{itemize}
\item[(i)] $\Sigma$ contains $0$ and is symmetric (i.e., if $x\in\Sigma$ then $-x\in\Sigma$);
\item[(ii)] $Fx\in \Sigma \implies x\in \Sigma $;
\item[(iii)] for all $x\in \Gamma$ there is some $m\ge 0$ and $x_{0},\ldots ,x_{m}\in \Sigma $ such that 
$$x=[x_0x_1\cdots x_m]_F:=x_{0}+Fx_{1}+\cdots +F^mx_{m};$$
\item[(iv)] for all $x_1,\dots,x_5\in \Sigma $ there exist $t,t'\in \Sigma $ such that $x_1+\cdots+x_5=t+Ft'$;
\item[(v)] If $x_1,x_2,x_3\in\Sigma$ and $x_1+x_2+x_3\in F(\Gamma)$, then there exists $t\in \Sigma$ such that $x_1+x_2+x_3=Ft$.
\end{itemize}
\end{definition}

\begin{example}
\label{examplespan}
If $k\geq4$ then $\{-k+1,-k+2,\dots,0,1,\dots,k-1\}$ is a $k$-spanning set for $\mathbb Z$.
Here  $\Gamma=\mathbb Z$ and $F$ acts as multiplication by $k$.
\end{example}

The idea is to view $\Sigma$ as the digits for $F$-expansions of elements of $\Gamma$.
So, given a word $x_0x_1\cdots x_m\in \Sigma^*$, we can view it as a base-$F$ representation of
$$[x_0x_1\cdots x_m]_F:=x_{0}+Fx_{1}+\cdots +F^mx_{m}$$
(Let us make the convention that if $\emptyset$ is the empty word on $\Sigma$ then $[\emptyset]_F:=0$.)
So condition~(iii) is saying that every element has a (possibly not unique) $F$-expansion with digits from $\Sigma$.

Condition~(iv) says that the sum of five one-digit elements is a two-digit element.
This may seem like overkill, but we can always ensure this in practice and it will make several arguments easier.
The following lemma is a version of property~(iv) for sums of elements whose $F$-expansions are longer than one-digit.

\begin{lemma}
\label{carry}
Suppose $\Sigma$ is an $F$-spanning set for $\Gamma$.
Given three words $w_1,w_2,w_3$ on $\Sigma$ of length $m$, there is a word $u$ on $\Sigma$ of length $m+1$ such that
$$[w_1]_F+[w_2]_F+[w_3]_F=[u]_F.$$
\end{lemma}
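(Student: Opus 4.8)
The plan is to prove this by induction on the length $m$ of the words, with the base case $m=1$ being almost immediate from property~(iv) of an $F$-spanning set (taking $u=tt'$), together with the convention $[\emptyset]_F=0$ for the degenerate case $m=0$. For the inductive step, write $w_i = x_i v_i$ where $x_i\in\Sigma$ is the leading digit and $v_i$ is a word of length $m-1$, so that $[w_i]_F = x_i + F[v_i]_F$. Summing over $i=1,2,3$ gives
$$[w_1]_F+[w_2]_F+[w_3]_F = (x_1+x_2+x_3) + F\bigl([v_1]_F+[v_2]_F+[v_3]_F\bigr).$$
Property~(iv), applied with $x_4=x_5=0$, lets me write $x_1+x_2+x_3 = t + Ft'$ for some $t,t'\in\Sigma$. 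By the induction hypothesis, there is a word $u'$ of length $m$ on $\Sigma$ with $[v_1]_F+[v_2]_F+[v_3]_F = [u']_F$. Substituting, the total becomes $t + F\bigl(t' + [u']_F\bigr) = t + F\,[\,t'u'\,]_F$, where $t'u'$ has length $m+1$.

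The main obstacle is that $t + F[t'u']_F$ is not yet literally of the form $[u]_F$ for a single word $u$ of length $m+1$: the word $t'u'$ has length $m+1$, so prepending $t$ would give length $m+2$, one too long. To fix this I need to absorb the leading digit $t$ into the digit $t'$, i.e. I want to rewrite $t' + [u']_F$ (a sum of a one-digit element and an $m$-digit element, hence essentially a sum of two words of length $m$) as a single word of length $m$ on $\Sigma$. This is exactly a two-summand version of the statement being proved, which follows from the three-summand version by padding with the zero word $0^m$ (legitimate since $0\in\Sigma$ by property~(i)), provided I set up the induction to deliver it. So the cleanest route is to prove, simultaneously by induction on $m$, the statement: \emph{the sum of any three length-$m$ words, or of any two length-$m$ words, equals some length-$(m+1)$ word}; the two-word case at length $m-1$ feeds the absorption step at length $m$, and the three-word case is what is ultimately wanted. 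Alternatively — and perhaps more transparently — I would strengthen the inductive claim to: the sum of any \emph{five} length-$m$ words equals some length-$(m+1)$ word, which matches property~(iv) on the nose at $m=1$ and makes the carry bookkeeping in the inductive step go through directly, since after pulling out $F$ one is left with a sum of five length-$(m-1)$ words plus the carry digit $t'$, i.e. again at most five (after one more application of~(iv) to consolidate) length-$(m-1)$ words.

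I should double-check that at each stage the digits produced genuinely lie in $\Sigma$ — this is guaranteed because every digit I produce comes out of an application of property~(iv), whose output $t,t'$ are by hypothesis in $\Sigma$, and I never manufacture digits by any other means. Properties~(ii), (iii), (v) are not needed for this particular lemma. Once the strengthened inductive statement is in hand, the lemma as stated is the three-summand (hence $\leq 5$-summand) instance.
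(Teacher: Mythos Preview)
Your overall strategy---induction on $m$ with base case supplied by property~(iv)---is correct, and it is the paper's strategy too. But there is a genuine gap in the inductive step as you have set it up, and it is not repaired by either of your proposed strengthenings.

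The issue is the direction in which you peel off a digit. You split $w_i = x_i v_i$ at the \emph{low-order} end and apply induction to the tails $v_i$, obtaining a word $u'$ of length $m$. You then need $t' + [u']_F$ to be representable by a word of length $m$ so that prepending $t$ gives length $m+1$. But this is asking for more than the lemma gives: the sum of two length-$m$ words is in general only a length-$(m+1)$ word, not a length-$m$ word. (Concretely, with $m=2$: your procedure applied to $w_i = x_i y_i$ produces $[trpp']_F$ of length $4$, not $3$, because the carries $t', r', \ldots$ keep propagating upward through already-occupied digit positions.) Your first fix does not help: the ``two-summand case at length $m-1$'' is not what is needed, since $u'$ has length $m$, not $m-1$. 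Your second fix---strengthening to five summands---fails for a related reason: after writing $\sum x_i = t + Ft'$ you are left inside the $F$ with $t' + \sum_{i=1}^5 [v_i]_F$, which is six length-$(m-1)$ words; property~(iv) handles only five digits at a time, so there is no single application of~(iv) that ``consolidates'' six words back down to five.

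The paper avoids this by peeling off the \emph{high-order} digit instead. Writing $w_i = w_i' x_i$ with $w_i'$ of length $m-1$, induction gives $\sum [w_i']_F = [u_0\cdots u_{m-1}]_F$ of length $m$; then
\[
\sum_i [w_i]_F = [u_0\cdots u_{m-2}]_F + F^{m-1}\bigl(u_{m-1} + x_1 + x_2 + x_3\bigr),
\]
and the bracketed sum is \emph{four} elements of $\Sigma$ (the inductive carry $u_{m-1}$ together with the three unprocessed top digits), well within the five allowed by~(iv). Writing it as $s + Fs'$ yields $[u_0\cdots u_{m-2}\,s\,s']_F$ of the correct length $m+1$. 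The point is that the extra digit produced by induction sits at the top, where it can be merged with the still-unprocessed top digits \emph{before} they are expanded; in your ordering the carry collides with a word that has already been fully expanded.
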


\begin{proof}
We do this by induction on $m$.
The case when $m=1$ is taken care of by condition~(iv).
Suppose $w_i=x_{i,0}\cdots x_{i,m}$ for $i=1,2,3$.
Then,
\begin{eqnarray*}
\sum_{i=1}^3[w_i]_F
&=&
\sum_{i=1}^3\big([x_{i,0}\cdots x_{i,m-1}]_F+F^m(x_{i,m})\big)\\
&=&
[u_0\cdots u_m]_F+\sum_{i=1}^3F^m(x_{i,m})\ \ \ \text{ for some $u_j\in\Sigma$ by induction}\\
&=&
[u_0\cdots u_{m-1}]_F+F^m(u_m+x_{1,m}+x_{2,m}+x_{3,m})\\
&=&
[u_0\cdots u_{m-1}]_F+F^mt+F^{m+1}t'\ \ \ \text{ for some $t,t'\in\Sigma$, by~(iv)}\\
&=&
[u_0\cdots u_{m-1}tt']_F.
\end{eqnarray*}
So $u=u_0\cdots u_{m-1}tt'$ is the word of length $m+2$ that we seek.
\end{proof}

Note that condition~(v) implies condition~(ii) if $F$ is injective (by taking $x_1=Fx$ and $x_2=x_3=0$), and while we will assume injectivity of $F$ eventually, we prefer to give these definitions in more generality.
The following lemma extends condition~(v) in exactly the same way that the previous lemma extended condition~(iv).

\begin{lemma}
\label{inF}
Suppose $\Sigma$ is an $F$-spanning set for $\Gamma$.
Given three words $w_1,w_2,w_3$ on $\Sigma$ of length $m$ such that $[w_1]_F+[w_2]_F+[w_3]_F\in F\Gamma$, there is a word $u$ on $\Sigma$ of length $m$ such that
$$[w_1]_F+[w_2]_F+[w_3]_F=F([u]_F).$$
\end{lemma}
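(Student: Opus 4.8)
The plan is to mimic the inductive proof of Lemma~\ref{carry}, carrying along the extra divisibility hypothesis. The base case $m=1$ is exactly condition~(v) of Definition~\ref{spanning}. For the inductive step, write $w_i = x_{i,0}\cdots x_{i,m}$ and split off the top digit as before: $\sum_i [w_i]_F = \sum_i [x_{i,0}\cdots x_{i,m-1}]_F + F^m\sum_i x_{i,m}$. The obstacle is that we cannot directly apply induction to the truncated words $x_{i,0}\cdots x_{i,m-1}$, because their sum need not lie in $F\Gamma$ — only the full sum is assumed to. So the first move must be to use Lemma~\ref{carry} (not the inductive hypothesis of the current lemma) to replace the truncated part by a single word.

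Concretely, by Lemma~\ref{carry} there is a word $v = v_0\cdots v_m$ on $\Sigma$ of length $m+1$ with $\sum_{i=1}^3[x_{i,0}\cdots x_{i,m-1}]_F = [v]_F = [v_0\cdots v_{m-1}]_F + F^m v_m$. Hence
$$\sum_{i=1}^3[w_i]_F = [v_0\cdots v_{m-1}]_F + F^m\bigl(v_m + x_{1,m}+x_{2,m}+x_{3,m}\bigr).$$
Now I would like to push the low-order part $[v_0\cdots v_{m-1}]_F$ inside a single $F$, but again this term need not be in $F\Gamma$ on its own. Instead, observe that the whole right-hand side is in $F\Gamma$ by hypothesis, and $F^m(\cdots)$ is certainly in $F\Gamma$ (as $m\ge 1$), so $[v_0\cdots v_{m-1}]_F \in F\Gamma$ as well. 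This is the key point that unlocks the induction. Moreover $[v_0\cdots v_{m-1}]_F = v_0 + F[v_1\cdots v_{m-1}]_F$, and since $F[v_1\cdots v_{m-1}]_F \in F\Gamma$, we get $v_0 \in F\Gamma$; applying condition~(v) with $x_1 = v_0$, $x_2=x_3=0$ produces $s_0\in\Sigma$ with $v_0 = Fs_0$. Thus $[v_0\cdots v_{m-1}]_F = F\bigl(s_0 + [v_1\cdots v_{m-1}]_F\bigr) = F([s_0 v_1 v_2\cdots v_{m-1}]_F)$, a word of length $m$ inside a single $F$.

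For the high-order part, note $v_m + x_{1,m}+x_{2,m}+x_{3,m}$ is a sum of four elements of $\Sigma$; by condition~(iv) (adjoining a zero digit to make five) there are $t,t'\in\Sigma$ with $v_m+x_{1,m}+x_{2,m}+x_{3,m} = t + Ft'$, so $F^m(v_m+\cdots) = F^m t + F^{m+1}t'$. Combining,
$$\sum_{i=1}^3[w_i]_F = F([s_0 v_1\cdots v_{m-1}]_F) + F^m t + F^{m+1} t' = F\bigl([s_0 v_1\cdots v_{m-1}]_F + F^{m-1}t + F^{m}t'\bigr).$$
The word inside the final $F$ is $s_0 v_1 v_2\cdots v_{m-1}\,t\,t'$, which — reading off positions $0,1,\dots,m-1,m-1,m$ — I need to be a genuine length-$m$ word on $\Sigma$; here a small bookkeeping check is required to merge the two contributions at position $m-1$ using condition~(iv) once more (sum of $v_{m-1}$ and $t$, two elements of $\Sigma$, padded to five by zeros, yields a two-digit word, which may bump the length to $m+1$ rather than $m$). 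I expect the main obstacle to be exactly this length control: getting the output word to have length $m$ and not $m+1$. If it genuinely requires length $m+1$ in the worst case, the cleanest fix is to strengthen the induction by proving the statement for input words padded with enough leading zeros, or to absorb the overflow digit by noting it must itself lie in $F\Gamma$ and re-applying~(v); either way the divisibility-propagation trick of the second paragraph is the conceptual heart of the argument, and the rest is routine digit arithmetic using conditions~(iv) and~(v).
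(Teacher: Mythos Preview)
Your ``conceptual heart'' is right, but you deploy it in the wrong place, and this creates both an algebraic slip and the length-control problem you flag at the end.

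First, the obstacle you identify is illusory. You say the truncated sum $\sum_i [x_{i,0}\cdots x_{i,m-1}]_F$ ``need not lie in $F\Gamma$'', and so you detour through Lemma~\ref{carry}. But the very divisibility-propagation argument you use two paragraphs later works immediately here: $\sum_i[w_i]_F\in F\Gamma$ by hypothesis and $F^m\sum_i x_{i,m}\in F\Gamma$ since $m\ge 1$, so their difference $\sum_i [x_{i,0}\cdots x_{i,m-1}]_F$ is in $F\Gamma$. This is exactly what the paper does: apply the inductive hypothesis \emph{directly} to the three truncated words, obtaining a word $u_0\cdots u_{m-1}$ of length $m$ with $\sum_i [x_{i,0}\cdots x_{i,m-1}]_F = F([u_0\cdots u_{m-1}]_F)$. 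Then peel off the top digit $u_{m-1}$, combine $u_{m-1}+x_{1,m}+x_{2,m}+x_{3,m}=t+Ft'$ via~(iv), and read off $F([u_0\cdots u_{m-2}\,t\,t']_F)$, a word of length exactly $m+1$. No overflow.

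Second, your displayed identity $s_0 + [v_1\cdots v_{m-1}]_F = [s_0\,v_1\,v_2\cdots v_{m-1}]_F$ is false. The left-hand side is $s_0 + v_1 + Fv_2 + \cdots + F^{m-2}v_{m-1}$, while the right-hand side is $s_0 + Fv_1 + F^2v_2 + \cdots + F^{m-1}v_{m-1}$. What you actually have is $[v_0\cdots v_{m-1}]_F = F\bigl(s_0 + [v_1\cdots v_{m-1}]_F\bigr)$, and the expression in parentheses has $s_0+v_1$ sitting at position~$0$, not a single digit. Repairing this (e.g.\ by one more application of~(iv)) feeds directly into the cascade of carries you already worry about, pushing the final length to $m+2$.

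The fix is not to strengthen the induction or to pad with zeros; it is simply to drop the detour through Lemma~\ref{carry} and apply the inductive hypothesis where you first thought you could not.
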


\begin{proof}
This is again by induction on $m$ with $m=1$ being condition~(v).
Suppose $w_i=x_{i,0}\cdots x_{i,m}$ for $i=1,2,3$.
Then,
\begin{eqnarray*}
\sum_{i=1}^3[w_i]_F
&=&
\sum_{i=1}^3\big([x_{i,0}\cdots x_{i,m-1}]_F+F^m(x_{i,m})\big)\\
&=&
F([u_0\cdots u_{m-1}]_F)+\sum_{i=1}^3F^m(x_{i,m})\ \ \ \text{ by induction}\\
&=&
F\big([u_0\cdots u_{m-2}]_F+F^{m-1}(u_{m-1})\big)+F^m\big(\sum_{i=1}^3x_{i,m}\big)\\
&=&
F([u_0\cdots u_{m-2}]_F)+F^m(u_{m-1}+x_{1,m}+x_{2,m}+x_{3,m})\\
&=&
F([u_0\cdots u_{m-2}]_F)+F^mt+F^{m+1}t'\ \ \ \text{ for some $t,t'\in\Sigma$, by~(iv)}\\
&=&
F\big([u_0\cdots u_{m-2}]_F+F^{m-1}t+F^mt'\big)\\
&=&
F([u_0\cdots u_{m-2}tt']_F).
\end{eqnarray*}
So $u=u_0\cdots u_{m-2}tt'$ is the word of length $m+1$ that we seek.
\end{proof}

\begin{remark}
In both Lemma~\ref{carry} and Lemma~\ref{inF} we did not use the full strength of property~(iv), we only used that the sum of four (rather than five) one-digit elements is a two-digit element.
However the stronger statement is used in the proof of Lemma~\ref{sim}(b) below.
\end{remark}

We can always expand a given $F$-spanning set so that it contains any fixed element we want.
Indeed, this follows from:

\begin{lemma}
\label{expandspan}
Suppose $\Sigma$ is an $F$-spanning set for $\Gamma$.
Fixing $m>0$ let
$$\Sigma^{(m)}:=\{[w]_F:w\in\Sigma^*, {\rm length}(w)=m\}.$$
Then $\Sigma^{(m)}$ is an $F$-spanning set for $\Gamma$.
\end{lemma}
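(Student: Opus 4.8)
The plan is to check that $\Sigma^{(m)}$ satisfies each of the five clauses of Definition~\ref{spanning}. Two preliminary observations drive everything. First, $\Sigma^{(m)}$ is finite, being the image of $\Sigma^m$ under $(x_0,\dots,x_{m-1})\mapsto[x_0\cdots x_{m-1}]_F$. Second, and more usefully, $\Sigma\subseteq\Sigma^{(m)}$: given $x\in\Sigma$, the word $x\,0\cdots 0$ of length $m$ (with $m-1$ trailing zeros) lies in $\Sigma^*$ and has $[x\,0\cdots 0]_F=x$. From the second observation, clause~(iii) for $\Sigma^{(m)}$ is immediate, since any $F$-expansion of an element of $\Gamma$ with digits in $\Sigma$ is already one with digits in $\Sigma^{(m)}$; and clause~(i) is nearly so: $0\in\Sigma\subseteq\Sigma^{(m)}$, and if $x=[x_0\cdots x_{m-1}]_F\in\Sigma^{(m)}$ then $-x=[(-x_0)\cdots(-x_{m-1})]_F\in\Sigma^{(m)}$ because $\Sigma$ is symmetric. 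When $m=1$ we have $\Sigma^{(1)}=\Sigma$ and there is nothing to prove, so for the remaining clauses assume $m\ge 2$.

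For clause~(iv), given $z_1,\dots,z_5\in\Sigma^{(m)}$, write $z_j=\sum_{i=0}^{m-1}F^ix_{j,i}$ with all $x_{j,i}\in\Sigma$. Applying clause~(iv) of $\Sigma$ to the $i$-th ``column'' $x_{1,i},\dots,x_{5,i}$ produces $t_i,c_i\in\Sigma$ with $\sum_{j=1}^5x_{j,i}=t_i+Fc_i$; summing over $i$ and collecting terms gives $\sum_{j=1}^5 z_j=\sum_{i=0}^{m-1}F^it_i+F\sum_{i=0}^{m-1}F^ic_i=[t_0\cdots t_{m-1}]_F+F[c_0\cdots c_{m-1}]_F$, which is of the required form $s+Fs'$ with $s,s'\in\Sigma^{(m)}$. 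No iterated carrying is needed here. Clause~(v) is handled similarly but with a carry propagating: given $z_1,z_2,z_3\in\Sigma^{(m)}$ with $z_1+z_2+z_3\in F\Gamma$, and $z_j=\sum_{i<m}F^ix_{j,i}$, the bottom column sum $P_0:=x_{1,0}+x_{2,0}+x_{3,0}$ equals $z_1+z_2+z_3$ minus an element of $F\Gamma$, hence lies in $F\Gamma$; clause~(v) of $\Sigma$ gives $P_0=Ft_0$ with $t_0\in\Sigma$, so that $z_1+z_2+z_3=F\big(t_0+\sum_{i=1}^{m-1}F^{i-1}(x_{1,i}+x_{2,i}+x_{3,i})\big)$. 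One then rewrites the argument of $F$ as a word of length $m$ on $\Sigma$ by a carrying induction exactly as in the proof of Lemma~\ref{carry}: a carry together with the three digits of the next column is four elements of $\Sigma$, which clause~(iv) of $\Sigma$ turns into a digit plus a new carry. Thus $z_1+z_2+z_3=F([u]_F)$ for a word $u$ of length $m$ on $\Sigma$, i.e. $F$ of an element of $\Sigma^{(m)}$.

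Clause~(ii) is where one must be slightly careful, and I expect it to be the only real obstacle. Suppose $Fx=[a_0\cdots a_{m-1}]_F\in\Sigma^{(m)}$ with $a_i\in\Sigma$. The tempting move — use clause~(v) of $\Sigma$ to write $a_0=Ft_0$ and then ``cancel the outer $F$'' — would require injectivity of $F$, which is not being assumed at this point. Instead, set $y:=x-\sum_{i=1}^{m-1}F^{i-1}a_i$. Then $Fy=Fx-\sum_{i=1}^{m-1}F^ia_i=a_0\in\Sigma$, so clause~(ii) of $\Sigma$ applied to $y$ forces $y\in\Sigma$ directly. Now $x=y+a_1+Fa_2+\cdots+F^{m-2}a_{m-1}$, and running the carrying induction once more — here each column contributes a carry plus a single digit, two elements of $\Sigma$, reduced via clause~(iv) of $\Sigma$ — rewrites $x$ as a word of length $m$ on $\Sigma$, so $x\in\Sigma^{(m)}$. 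The one thing to watch, then, is not to prove clause~(ii) by dividing by $F$; once one sees that the relevant $y$ is itself forced to be a single digit of $\Sigma$, the whole verification goes through in the stated generality with no injectivity hypothesis.
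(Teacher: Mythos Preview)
Your proof is correct, and for clauses~(i), (ii), (iii), and~(v) it is essentially the paper's argument: the paper simply cites Lemma~\ref{inF} for~(v), which is the same carrying induction you sketch, and your handling of~(ii) is exactly the paper's, including the key point that one applies clause~(ii) of~$\Sigma$ to the difference $y=x-\sum_{i\ge1}F^{i-1}a_i$ rather than trying to divide by~$F$.

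The one genuine difference is clause~(iv). The paper argues by induction on~$m$, peeling off the last digit of each $w_i$ and applying the inductive hypothesis to the length-$m$ prefixes. You instead apply clause~(iv) of~$\Sigma$ to each column in parallel, obtaining $\sum_j x_{j,i}=t_i+Fc_i$, and then observe that summing over~$i$ telescopes immediately into $[t_0\cdots t_{m-1}]_F+F[c_0\cdots c_{m-1}]_F$. This is a cleaner route: no induction, no carry propagation between columns, and the two words of~$\Sigma^{(m)}$ appear at once. The paper's inductive approach, by contrast, is closer in spirit to Lemmas~\ref{carry} and~\ref{inF} and so fits more uniformly with the surrounding arguments, but your columnwise trick buys a shorter proof of this particular clause.
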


\begin{proof}
Property~(i) and~(iii) are clearly inherited by $\Sigma^{(m)}$, and property~(v) for $\Sigma^{(m)}$ is precisely what Lemma~\ref{inF} says.

For property~(ii) suppose $a\in\Gamma$ is such that $Fa=[w]_F$ for some $w\in\Sigma^*$ of length $m$.
Say $w=x_0\cdots x_{m-1}$.
We may assume $m>1$.
Then
$$F(a-x_1-Fx_2-\cdots-F^{m-2}x_{m-1})=x_0\in\Sigma,$$
and hence by~(ii) for $\Sigma$ we have $y:=a-x_1-Fx_2-\cdots-F^{m-2}x_{m-1}\in\Sigma$.
So $a=y+[x_1\cdots x_{m-1}]_F$.
By Lemma~\ref{carry} we get $a=[u]_F$ for some $u\in\Sigma^*$ of length~$m$.
That is, $a\in\Sigma^{(m)}$.

We prove property~(iv) by induction on $m$, the case of $m=1$ is just that $\Sigma$ satisfies property~(iv).
We are given words $w_1,\dots,w_5$ on $\Sigma$ of length $m+1$.
Write each $w_i=[w_i'x_i]$ where $w_i'$ is of length $m$ and $x_i\in\Sigma$.
Then,
\begin{eqnarray*}
\sum_{i=1}^5[w_i]_F
&=&
\sum_{i=1}^5[w_i']_F+F^m\big(\sum_{i=1}^5x_i\big)\\
&=&
[y_0\cdots y_{m-1}]_F+F([z_0\cdots z_{m-1}]_F)+F^mt+F^{m+1}(t')\\
&=&
[y_0\cdots y_{m-1}t]_F+F([z_0\cdots z_{m-1}t']_F)
\end{eqnarray*}
where the second equality is by induction and~(iv) for $\Sigma$, and the $y_j$, $z_j$ and $t,t'$ are in $\Sigma$.
\end{proof}

As it turns out we will want a bit more: that $\Sigma^{(m)}$ be an $F^m$-spanning set also.
We can get this if we assume in addition that $F$ is injective.

\begin{lemma}
\label{expandspanmore}
Suppose $F$ is injective, $\Sigma$ is an $F$-spanning set for $\Gamma$, and $m>0$.
Then $\Sigma^{(m)}$ is an $F^m$-spanning set for $\Gamma$.
\end{lemma}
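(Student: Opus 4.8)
The plan is to verify the five conditions of Definition~\ref{spanning} for the pair $(\Sigma^{(m)},F^m)$, using at every turn the fact established in Lemma~\ref{expandspan} that $\Sigma^{(m)}$ is already an $F$-spanning set, and so already satisfies (i)--(v) with respect to $F$ itself. The organising remark is that $F^m$-expansions over $\Sigma^{(m)}$ are nothing but $F$-expansions over $\Sigma$ read off in blocks of length $m$: if $y_i=[w_i]_F$ with each $w_i\in\Sigma^*$ of length $m$, then $F^{mi}y_i=[0^{mi}w_i]_F$, so $[y_0\cdots y_k]_{F^m}=[w_0w_1\cdots w_k]_F\in\Sigma^{(m(k+1))}$; conversely every element of $\Sigma^{((k+1)m)}$ arises this way, by chopping its defining word into $k+1$ consecutive blocks of length $m$.

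Conditions (i), (ii) and (iii) are then routine. Property (i) is inherited verbatim ($0=[0^m]_F$, and symmetry comes from negating digits). For (ii): if $F^mx\in\Sigma^{(m)}$ then $F(F^{m-1}x)\in\Sigma^{(m)}$, so property (ii) of the $F$-spanning set $\Sigma^{(m)}$ gives $F^{m-1}x\in\Sigma^{(m)}$, and iterating $m$ times gives $x\in\Sigma^{(m)}$. For (iii): given $x\in\Gamma$, use property (iii) of $\Sigma$ to write $x=[v]_F$, append zeros to $v$ (which does not change $[v]_F$) until its length is a multiple of $m$, and chop it into blocks of length $m$; by the organising remark this exhibits $x$ as an $F^m$-expansion over $\Sigma^{(m)}$.

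For (iv), given $x_1,\dots,x_5\in\Sigma^{(m)}$, two applications of Lemma~\ref{carry} (first to three of the defining length-$m$ words, then to the resulting length-$(m+1)$ word together with the remaining two words padded by a zero) produce a word $u=a_0\cdots a_{m+1}\in\Sigma^*$ of length $m+2$ with $x_1+\cdots+x_5=[u]_F$. Since $[u]_F=[a_0\cdots a_{m-1}]_F+F^m\big(a_m+Fa_{m+1}\big)$, setting $t:=[a_0\cdots a_{m-1}]_F\in\Sigma^{(m)}$ and $t':=[a_m\,a_{m+1}\,0^{m-2}]_F\in\Sigma^{(m)}$ gives $x_1+\cdots+x_5=t+F^mt'$. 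This uses $m\ge 2$; the case $m=1$ is trivial because then $\Sigma^{(1)}=\Sigma$ and $F^1=F$.

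The one place where the standing hypothesis of this lemma, injectivity of $F$, is actually needed --- and the step I expect to be the crux --- is (v). Suppose $x_1,x_2,x_3\in\Sigma^{(m)}$ with $x_1+x_2+x_3\in F^m\Gamma$. As the sum lies in $F\Gamma$, property (v) of the $F$-spanning set $\Sigma^{(m)}$ (equivalently Lemma~\ref{inF}) applied to $x_1,x_2,x_3$ gives $z_1\in\Sigma^{(m)}$ with $x_1+x_2+x_3=Fz_1$. Now from $Fz_1\in F^m\Gamma$ and injectivity of $F$ we conclude $z_1\in F^{m-1}\Gamma\subseteq F\Gamma$, so property (v) of $\Sigma^{(m)}$ applied to $z_1,0,0$ yields $z_2\in\Sigma^{(m)}$ with $z_1=Fz_2$, and injectivity again gives $z_2\in F^{m-2}\Gamma$. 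Iterating this peeling off of factors of $F$ exactly $m$ times produces $t:=z_m\in\Sigma^{(m)}$ with $x_1+x_2+x_3=F^mt$, which is property (v) for $(\Sigma^{(m)},F^m)$. Without injectivity there is no way to guarantee at each stage that the element one has reached is again in $F\Gamma$, which is precisely why that hypothesis is indispensable here.
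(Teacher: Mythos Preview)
Your proof is correct and follows essentially the same approach as the paper's: both verify (i)--(v) for $(\Sigma^{(m)},F^m)$ by leaning on Lemma~\ref{expandspan}, with the block-reading of $F$-expansions giving (iii), a length-$(m+2)$ word giving (iv), and injectivity driving (v). The only cosmetic differences are that for (iv) you invoke Lemma~\ref{carry} twice directly rather than first using property (iv) of $(\Sigma^{(m)},F)$, and for (v) you iterate property (v) on $z_i,0,0$ where the paper iterates property (ii) --- but since (v) applied to $z,0,0$ is exactly (ii) when $F$ is injective, these are the same argument.
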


\begin{proof}
Property~(i) is immediate, and property~(ii) for $(\Sigma^{(m)}, F^m)$ follows from property~(ii) for $(\Sigma^{(m)}, F)$.

For property~(iii), suppose $x\in \Gamma$.
By property~(iii) for $(\Sigma, F)$ we have $x=[x_0\cdots x_\ell]_F$ for some $x_0,\dots, x_\ell\in\Sigma$.
Write $\ell=qm+r$ for some $0\leq r<m$.
Then we have
$$x=(x_0+\cdots+F^{m-1}x_{m-1})+F^m(x_m+\cdots+F^{m-1}x_{2m-1})+\cdots+F^{qm}(x_{qm}+\cdots+F^rx_\ell).$$
So $x$ has an $F^m$-expansion with digits in $\Sigma^{(m)}$.

For property~(iv), suppose $x_1,\dots,x_5\in\Sigma^{(m)}$.
By property~(iv) for $(\Sigma^{(m)}, F)$ we have $x_1+\cdots+x_5=y+Fy'$ for some $y,y'\in\Sigma^{(m)}$.
Note that $Fy'$ has an $F$-expansion of length $m+1$ with digits in $\Sigma$, so that by Lemma~\ref{carry}, $y+Fy'$ has an $F$-expansion of length $m+2$ with digits in $\Sigma$.
Say
$y+Fy'=[t_0\cdots t_{m+1}]_F$.
So
$$x_1+\cdots+x_5=y+Fy'=(t_0+\cdots+F^{m-1}t_{m-1})+F^m(t_m+Ft_{m+1}).$$
So $x_1+\cdots+x_5$ has a $2$-digit $F^m$-expansion in terms of $\Sigma^{(m)}$.
(Note that we may assume $m>1$ so that $t_m+Ft_{m+1}\in\Sigma^{(m)}$.)

Finally, for property~(v), we will use injectivity of $F$.
Suppose $x_1,x_2,x_3\in\Sigma^{(m)}$ and $x_1+x_2+x_3=F^mx$ for some $x\in \Gamma$.
It suffices to show that $x\in\Sigma^{(m)}$.
Since $\Sigma^{(m)}$ is an $F$-spanning set and $F^mx\in F\Gamma$, we get that $x_1+x_2+x_3=Fy$ for some $y\in\Sigma^{(m)}$.
By injectivity, $F^{m-1}x=y\in\Sigma^{(m)}$.
But then $x\in\Sigma^{(m)}$ by repeated use of property~(ii) for $(\Sigma^{(m)},F)$.
\end{proof}

We now address the question of when $F$-spanning sets exist.
In particular, we will show that they exist for any finitely generated subgroup of an isotrivial commutative algebraic group -- and hence in any isotrivial Mordell-Lang context -- as long as we allow replacing $F$ by some power.
Indeed, what is required is the existence of a suitable height function.

\begin{proposition}
\label{height}
Suppose $\Gamma$ is a finitely generated abelian group with an injective endomorphism $F$.
Suppose $\Gamma$ admits a function $h:\Gamma\to[0,\infty)$ satisfying
\begin{enumerate}
\item for some $D>1$ and $\kappa>0$,
$$h(-a)\le Dh(a)+\kappa$$
and
$$h(a+b)\le D(h(a)+h(b))+\kappa$$
for any $a,b\in\Gamma$;
\item Northcott property: for every $N$ there are only finitely many $a\in A$ such that $h(a)\le N$;
\item canonicity: for some $C>1$, $h(F(a))\ge Ch(a)$ for all $a\in\Gamma$ outside of a finite set.
\end{enumerate}
Then, for some $r>0$, $\Gamma$ has an $F^r$-spanning set.
\end{proposition}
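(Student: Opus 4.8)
The plan is to build an $F^r$-spanning set from a height function by a greedy/division-with-remainder argument, using canonicity to force termination of $F$-expansions. First I would fix constants: using canonicity (3), there is $C>1$ and a finite exceptional set $E_0$ with $h(Fa)\geq Ch(a)$ for $a\notin E_0$. The idea is that dividing an element $x$ by $F$ (i.e., writing $x = x_0 + Fy$ with $x_0$ from a bounded "remainder" set) strictly decreases height once $h(x)$ is large, so iterating terminates and produces a base-$F$ expansion with digits from a fixed finite set. Concretely, since $F$ is injective, $\Gamma/F\Gamma$ is finite; pick coset representatives $R=\{r_1,\dots,r_s\}$, and for each $x$ write $x - r_i \in F\Gamma$ for the appropriate $i$, so $x = r_i + Fy$ with $y$ uniquely determined by injectivity. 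Then $h(y) \leq \frac{1}{C} h(Fy) = \frac{1}{C} h(x - r_i) \leq \frac{D}{C}(h(x) + \max_i h(r_i)) + \frac{\kappa}{C}$ as long as $y\notin E_0$; choosing things so that $\frac{D}{C}<1$ — which is where passing to a power $F^r$ comes in, since $h(F^r a)\geq C^r h(a)$ and we may take $C^r > D$ — this contraction shows that after finitely many steps the running quotient lands in some fixed finite "core" set $B$ (the union of $E_0$ with all elements of height below the fixed point of the affine map $t\mapsto \frac{D}{C}(t+\text{const})+\frac{\kappa}{C}$, which is finite by Northcott (2)).

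So replacing $F$ by $F^r$ with $C^r>D$ from the outset, I would define a candidate digit set $\Sigma_0$ to be the (finite) set of all "remainders" $r_i$ together with the finite core set $B$, then close off $\Sigma_0$ under the operations demanded by Definition~\ref{spanning}: symmetrize and add $0$ for (i); take the downward closure under $F$ (i.e., add $x$ whenever $Fx$ is already in, which by injectivity and canonicity can only add finitely many elements — if $Fx\in\Sigma$ then $h(x)\leq \frac1C h(Fx)$ is bounded, so by Northcott only finitely many such $x$) for (ii); and for (iv) and (v) repeatedly throw in witnesses. The subtlety is that (iv) and (v) require that \emph{new} witnesses $t,t'$ themselves lie in $\Sigma$, so this is an iterative closure that must be shown to stabilize after finitely many rounds. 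This is the step I expect to be the main obstacle: one must argue that the closure process does not blow up. The key estimate is again height-theoretic: if $x_1,\dots,x_5\in\Sigma$ have bounded height, then $x_1+\cdots+x_5$ has bounded height (by repeated use of (1)), and when we write it as $t + Ft'$ with $t$ a remainder and $t'$ the quotient, $t'$ has height controlled by $\frac1C$ times a bound depending only on $\max_{\Sigma} h$; choosing $r$ large enough (so $C^r$ beats the relevant accumulated constant) ensures $h(t') \leq \max_{\Sigma} h$, so no genuinely new element is introduced and the closure is already complete after symmetrizing and adding remainders, the core set, and the $F$-downward closure. The same computation handles (v) since it is just the case where the remainder is $0$. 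Property~(iii) is then immediate from the termination argument of the first paragraph: every $x$ has an $F^r$-expansion with digits the remainders $r_i$ and a final digit in $B$, all of which lie in $\Sigma$.

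One technical point to be careful about: the exceptional finite sets (from canonicity, and the elements of small height) interact with the $F$-downward closure, so I would first enlarge the core set $B$ to be $F$-downward closed — which is legitimate since adding all $x$ with $F^jx\in B$ for some $j$ still yields a finite set, because $h(x)\leq C^{-j}h(F^j x)$ is bounded and Northcott applies. With $B$ chosen $F$-downward closed and large enough to absorb all the affine-contraction fixed points and all the relevant exceptional sets, and with $r$ chosen so that $C^r$ dominates $D$ times the constants appearing in (1) applied to five-fold sums, the finite set $\Sigma := (B \cup R \cup (-B) \cup (-R))$, further $F$-downward-closed and containing $0$, satisfies (i)–(v) for $F^r$. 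I would present the height bookkeeping as a single lemma fixing $r$ and all constants at once, then verify (i)–(v) in turn, with (iv) carrying the real content and (ii), (iii), (v) following quickly from the same estimates.
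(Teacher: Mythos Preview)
Your approach is essentially the paper's: build $\Sigma$ as (roughly) a height ball and use canonicity to verify (i)--(v) for $F^r$ with $r$ large. The paper in fact defines $\Sigma_N := \{x : h(x)\leq N \text{ or } h(-x)\leq N\}$ directly and checks (i)--(v) by explicit inequalities, which is cleaner than your iterative-closure presentation (and makes your worry that the closure ``might blow up'' disappear: one simply shows $\Sigma_N$ is already closed).

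There is, however, a genuine order-of-quantifiers gap you should address. You write: ``choosing $r$ large enough (so $C^r$ beats the relevant accumulated constant) ensures $h(t') \leq \max_\Sigma h$.'' But $\max_\Sigma h$ depends on your set $R$ of coset representatives for $\Gamma/F^r\Gamma$, and $R$ depends on $r$: as $r$ grows, $[\Gamma:F^r\Gamma]$ grows, and there is no a priori uniform height bound on a system of representatives. So ``choose $r$ large relative to $\max_\Sigma h$'' is circular as stated, and your promised ``single lemma fixing $r$ and all constants at once'' cannot be written in that order. The paper's resolution is to note that the key inequalities --- for instance $6D^5(DN+\kappa)+5D^4\kappa \leq C^r N$ for property~(iv) --- are, once $r$ is fixed with $C^r$ exceeding the purely multiplicative constant coming from condition~(1) (here essentially $6D^6$, which does \emph{not} involve $\Sigma$), preserved under increasing $N$. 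Thus one first fixes $r$ against the constants $D,\kappa$ alone, and only afterwards enlarges $N$ so that $\Sigma_N$ contains a full set of coset representatives for $F^r\Gamma$. Making this two-step choice explicit removes the circularity; without it the argument as you have sketched it is not well-founded.
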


\begin{proof}
Before we proceed let us point out that injectivity implies $F(\Gamma)$ is of finite index in $\Gamma$.
Indeed, $F(\Gamma)$ and $\Gamma$ have the same rank so that $\Gamma/F(\Gamma)$ is of rank $0$, but it is also finitely generated and hence must be finite.

Now, given a positive integer $N$, let $\Sigma_N$ denote the set of elements of $\Gamma$ such that one of $x$ or $-x$ has height at most $N$. 
Then by the Northcott property $\Sigma_N$ is finite and for $N$ large we have $0\in \Sigma_N$ and $\Sigma_N$ contains all $a$ for which $h(F(a))<Ch(a)$.
Then by construction we have that $x\in \Sigma_N \implies -x\in \Sigma_N$.
Also, if $x\notin\Sigma_N$ then $h(Fx)\ge Ch(x)>N$ and $h(-Fx)\ge Ch(-x)>N$, and so $Fx\notin\Sigma_N$.
So we have verified that $(\Gamma, F, \Sigma_N)$ satisfies properties~(i) and~(ii) of Definition~\ref{spanning}.
These conditions then remain true of $(\Gamma, F^r, \Sigma_N)$ for any $r>0$.

Pick $r>0$ so that we have the inequalities:
\begin{enumerate}
\item[(i)] $(D^2N+D\kappa+\kappa)/(C^r-D)\leq N$;
\item[(ii)] $6D^5(DN+\kappa) + 5D^4 \kappa  \leq C^r N$;
\item[(iii)] $3D^2(DN+\kappa) + 2D\kappa  \leq C^r N$.
\end{enumerate}
We observe that since $C^n\to\infty$ as $n\to\infty$, such an $r$ exists.
As we have seen, by injectivity, $\Gamma/F^r\Gamma$ is a finite group.
Note that the above inequalities remain true even as we increase $N$.
So we may assume that $\Sigma_N$ contains a complete set of coset representatives for $F^r\Gamma$.

We claim that $(\Gamma, F^r,\Sigma_N)$ satisfies property~(iii) from Definition~\ref{spanning}.
Toward a contradiction, suppose that there is some $x\in \Gamma$ that does not have an $F^r$-expansion with digits from $\Sigma_N$.
Pick such an $x$ of minimal height.
Then $x\notin\Sigma_N$ since otherwise, we could take $x=x_0$ would be such an expansion.
By construction, there is some $x_0\in \Sigma_N$ such that 
$x-x_0\in F^r\Gamma$.  Then $x=x_0+F^ry$.
If $h(y)<h(x)$ then, by minimality, $y$ has an $F^r$-expansion and hence so does $x$.
Thus we may assume that $h(y)\ge h(x)$.
For the same reaon, we may assume that $y\notin\Sigma_N$.
Hence,
\begin{eqnarray*}
C^r h(x)
&\le&
C^r h(y)\\
&\le&
h(F^ry) \ \ \ \ \ \ \text{since $y\notin\Sigma_n$}\\
&\le&
h(x-x_0)\\
&\le&
Dh(x) + Dh(-x_0)+\kappa\ \ \ \ \ \ \text{by condtition~(1)}\\
&\le&
Dh(x)+D^2N+D\kappa+\kappa
\end{eqnarray*}
where in the final line we use the fact that as $x_0\in \Sigma_N$, either $h(x_0)\le N$ or $h(-x_0)\le N$, and hence in either case $h(-x_0)\le DN+\kappa$ by condition~(1).
So $(C^r-D)h(x)\le D^2N+D\kappa +\kappa$.
But $(D^2N+D\kappa+\kappa)/(C^r-D)\le N$ by~(i), and so $h(x)\le N$, contradicting the fact that $x\notin\Sigma_N$.

Next observe that a simple induction on condition~(1) gives
$$h(y_1+\cdots +y_n) \le D^{n-1}(h(y_1)+\cdots + h(y_n)) + (n-1)D^{n-2}\kappa$$
for all $y_1,\dots,y_n\in\Gamma$.
Now, if $x_1,\ldots ,x_5\in \Sigma_N$ then there is some $t\in \Sigma_N$ such that 
$x_1+\cdots +x_5-t\in F^r\Gamma$.
It follows that
$$h(x_1+x_2+\cdots + x_5 - t)  \le 6D^5(DN+\kappa) + 5D^4 \kappa \le C^r N.$$
On the other hand, if we let $t'$ be such that 
$F^rt'=x_1+\cdots +x_5-t$ then
$$h(x_1+x_2+\cdots + x_5 - t) \ge C^rh(t').$$
So $h(t')\le N$ and hence $t'\in \Sigma_N$.  
We have written $x_1+\cdots+x_5$ as the two digit element $t+F^rt'$, verifying condition~(iv) for $(\Gamma,F^r, \Sigma_N)$.

Finally, to prove condtion~(v) of an $F^r$-spanning set, suppose $x_1,x_2,x_3\in \Sigma_N$ and $x_1+x_2+x_3\in F^r(\Gamma)$.
Then 
$x_1+x_2+x_3=F^rt$ for some $t\in\Gamma$.
Note that 
$h(F^rt) \le 3D^2(DN+\kappa)+ 2D\kappa \le C^r N$ on the one hand and $h(F^rt)\ge C^rh(t)$ on the other.
So $h(t)\leq N$ and we get that $t\in \Sigma_N$, as desired.
\end{proof}

\begin{corollary}
\label{mlfsetfauto}
Suppose $G$ is a commutative algebraic group over a finite field~$\mathbb F_q$ and $F:G\to G$ is the endomorphism induced by the $q$-power Frobenius map.
Suppose $\Gamma$ is a finitely generated subgroup of $G$ that is preserved by~$F$.
Then $\Gamma$ has an $F^r$-spanning set for some $r>0$.
\end{corollary}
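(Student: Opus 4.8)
\noindent
The plan is to obtain this as an application of Proposition~\ref{height}: its two hypotheses are that $F$ is injective on $\Gamma$ and that $\Gamma$ admits a height function satisfying~(1)--(3), and everything reduces to checking these. Injectivity is immediate, since the kernel of the $q$-power Frobenius morphism $F\colon G\to G$ is an infinitesimal group scheme (equivalently, $F$ is radicial), so it has no nontrivial points over any field; as $\Gamma\leq G(K)$ for some finitely generated field extension $K/\mathbb F_q$, we get $\ker(F)\cap\Gamma=(0)$.

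To build the height, I would fix such a $K$ together with a normal projective variety over $\mathbb F_q$ having function field $K$, so that the Weil height machine is available on $K$-points of projective $\mathbb F_q$-varieties. Choose a projective compactification $\overline G\supseteq G$ defined over $\mathbb F_q$ and an ample line bundle $\mathcal L$ on $\overline G$, and let $h_{\mathcal L}\colon\overline G(K)\to[0,\infty)$ be an associated Weil height, normalised (by adding a constant; recall $h_{\mathcal L}$ is bounded below as $\mathcal L$ is ample) to be nonnegative. Two standard facts do the real work. First, since $\overline G$ is defined over $\mathbb F_q$, the $q$-power Frobenius extends to a morphism $F\colon\overline G\to\overline G$, and $F^{*}\mathcal L\cong\mathcal L^{\otimes q}$ because Frobenius acts as multiplication by $q$ on $\mathrm{Pic}$; hence $h_{\mathcal L}\circ F=q\cdot h_{\mathcal L}+O(1)$ on $\overline G(K)$, with a uniform bound. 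Second, by standard properties of Weil heights on commutative algebraic groups (the theorem of the square type estimates used, for instance, in the height arguments of~\cite{fsets} and~\cite{ghiocamoosa}), the restriction of $h_{\mathcal L}$ to the finitely generated subgroup $\Gamma$ satisfies a quasi-triangle inequality $h_{\mathcal L}(a+b)\leq D_{0}\bigl(h_{\mathcal L}(a)+h_{\mathcal L}(b)\bigr)+\kappa_{0}$.

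Next I would pass to the canonical height adapted to $F$. Using that $\Gamma$ is $F$-invariant, put $g(x):=h_{\mathcal L}(x)+h_{\mathcal L}(-x)$ and
$$\widehat h(a):=\lim_{n\to\infty}\frac{1}{q^{n}}\,g\bigl(F^{n}a\bigr)\qquad(a\in\Gamma).$$
The limit exists by the usual telescoping estimate, since $g\circ F=q\cdot g+O(1)$ makes the partial sums Cauchy, and in fact $\widehat h=g+O(1)$. Being a limit of nonnegative quantities, $\widehat h\geq 0$; by construction $\widehat h(-a)=\widehat h(a)$ and $\widehat h(Fa)=q\,\widehat h(a)$ for all $a\in\Gamma$; and dividing the quasi-triangle inequality for $g$ through by $q^{n}$ and letting $n\to\infty$ gives $\widehat h(a+b)\leq D_{0}\bigl(\widehat h(a)+\widehat h(b)\bigr)$. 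Thus conditions~(1) and~(3) of Proposition~\ref{height} hold, the latter with $C=q\geq 2>1$.

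The remaining point, and the one I expect to require the most care, is the Northcott property~(2) for $\widehat h$ on $\Gamma$. Since $\widehat h\geq h_{\mathcal L}+O(1)$ it suffices to check it for $h_{\mathcal L}$ on $\Gamma$. The naive Northcott property fails over a function field because of the constant field, but it holds here because $\Gamma$ is finitely generated: the intersection of $\Gamma$ with the unipotent radical of $G^{0}$ is a finitely generated group of bounded exponent, hence finite, and $\Gamma/(\Gamma\cap G^{0})$ is finite as well, so modulo a finite subgroup $\Gamma$ maps to a finitely generated subgroup of a semiabelian variety, on which Lang--N\'eron together with the positive-definiteness of the canonical height modulo torsion gives the Northcott property; this transfers back to $h_{\mathcal L}$ on $\Gamma$ up to a bounded error. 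Having verified~(1)--(3), Proposition~\ref{height} yields an $F^{r}$-spanning set for $\Gamma$ for some $r>0$.
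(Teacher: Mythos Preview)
Your approach is correct in outline but takes a genuinely different route from the paper. The paper builds its height function by hand: it fixes a finitely generated field $K\supseteq\mathbb F_q$ with $\Gamma\leq G(K)$, chooses a finite-dimensional $\mathbb F_q$-vector space $V\subseteq K$ generating $K$ as a field, and sets $h_0(\alpha)$ to be the least $s$ with $\alpha=a/b$ for some $a,b\in V^s$; the height of a projective point is then the minimum over representatives. In this setup the Northcott property is immediate (each $V^s$ is a finite set), the quasi-triangle inequality comes directly from the explicit polynomial formulas for the group law on $G$, and canonicity $h(Fa)\geq Ch(a)$ is extracted from the $\lambda$-map estimates of Derksen~\cite[Proposition~5.2]{derksen}. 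Your approach, by contrast, invokes the Weil height machine relative to a projective model of $K$ and a compactification of $G$, then passes to a Tate canonical height; this trades the paper's elementary self-contained construction for more standard machinery.

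One point deserves correction: your assertion that ``the naive Northcott property fails over a function field because of the constant field'' is mistaken in the present situation. The constant field here is the \emph{finite} field $\mathbb F_q$, and over a finitely generated extension of a finite field the Weil height on projective space \emph{does} satisfy Northcott directly --- for the function field of a curve this is just finiteness of global sections of line bundles of bounded degree over $\mathbb F_q$, and the higher-dimensional case is similar. So your detour through the unipotent radical, the semiabelian quotient, and Lang--N\'eron, while it can be made to work, is unnecessary: once you have $h_{\mathcal L}$ on $\Gamma\leq G(K)$ you already have property~(2) of Proposition~\ref{height}. That reduction would be the right move over a constant field like $\overline{\mathbb F_q}$, but not here. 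The remaining ingredients --- injectivity of Frobenius on $K$-points, the relation $F^*\mathcal L\cong\mathcal L^{\otimes q}$ giving $h_{\mathcal L}\circ F=q\,h_{\mathcal L}+O(1)$, the Tate telescoping limit, and the quasi-triangle inequality (which holds because the group law, viewed as a rational map on $\overline G\times\overline G$, is given by polynomials of bounded degree) --- are sound.
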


\begin{proof}
Write $G$ as an open subset of a closed subsets of $\mathbb{P}_n$.
There is a finite open cover $\{U_i\}_{i=1}^m$ of $G$, and homogeneous $Q_0^{(i)},Q_1^{(i)},\ldots ,Q_n^{(i)}\in\mathbb F_q[x_0,\ldots ,x_n]$ with no common zeros on $U_i$, such that for $x\in U_i$ we have
$$-x=[Q_0^{(i)}(x):Q_1^{(i)}(x):\cdots :Q_n^{(i)}(x)].$$
Similarly, there is a finite open cover $\{V_i\}_{i=1}^\ell$ of $G\times G\subset\mathbb P_n\times\mathbb P_n$ and homogeneous $P_0^{(i)},P_1^{(i)},\ldots ,P_n^{(i)}\in\mathbb F_q[x_0,\ldots ,x_n,y_0,\dots,y_n]$, with no common zeros on $V_i$, such that for all $(x,y)\in V_i$ we have
$$x+y=[P_0^{(i)}(x,y):\cdots :P_n^{(i)}(x,y)].$$
Let $D$ denote the maximum of the total degrees of all these $P$'s and $Q$'s.  

Fix a finitely generated extension $K$ of $\mathbb F_q$ such that $\Gamma\leq G(K)$.
Let $1=e_1,\ldots ,e_s$ be a basis for $K$ over $K^{\langle q\rangle}:=\{x^q\colon x\in K\}$.
We then have maps $\lambda_i:K\to K$ defined by the rule
$\displaystyle a=\sum_{i=1}^s \lambda_i(a)^q e_i$.
Let $V$ be a finite-dimensional $\mathbb{F}_q$-vector subspace of $K$ such that $e_1,\ldots ,e_s\in V$ and $K$ is the field of fractions of the $\mathbb{F}_q$-subalgebra, $A$, of $K$ generated by $V$.

For $\alpha\in K$, we define $h_0(\alpha)$ to be the smallest nonnegative integer $s$ such that $\alpha$ can be written as a ratio $a/b$ with $a,b\in V^s$.
Here $V^0:=\mathbb{F}_q$ and for $s>0$, $V^s$ denotes the $\mathbb F_q$-vector space generated by the $s$-fold products of elements in $V$.
Given $x=[x_0:\cdots :x_n]\in \mathbb{P}_n(K)$, we note that there is a representation of $x$ such that $x_0,\ldots ,x_n\in A$, and we define $h(x)$ to be $\inf\{\max\big(h_0(x_0),\ldots ,h_0(x_n)\big)\}$, where the infimum is taken as $x_0,\ldots ,x_n$ range over all representations of $x$ with each $x_i\in A$.

Notice that $h_0(x+y)\le \max(h_0(x),h_0(y))$ and $h_0(xy)\le h_0(x)+h_0(y)$,  for all $x,y\in A$.
Since the $P$'s and $Q$'s defining addition and additive inverse on $G$ have coefficients in $\mathbb F_q$ and degree bounded by $D$, it follows that
$h(x+y)\le Dh(x)+Dh(y)$ and $h(-x)\le Dh(x)$ for all $x,y\in G$. 
So $h$ satisfies property~(1) of Proposition~\ref{height} with $\kappa=0$.
That the Northcott property holds for $h$ is straightforward as each $V^i$ is a finite set.
It remains to verify canonicity.

By~\cite[Proposition 5.2]{derksen} there is some $\ell$ such that $\lambda_i(V^{d\ell })\subseteq V^{d\ell-d}$ for all $i=1,\dots,m$ and all $d>0$.
We show that $C:=\frac{\ell^2}{\ell^2-1}$ works.
Let $a\in A$ be such that $a^q\notin V^{\ell^2+\ell}$.
This omits only a finite set of elements of $A$.
Write $h_0(a^q)=\ell d+r$ for some $0\leq r<\ell$.
Then $d> \ell$ and $a^q\in V^{\ell d+r}\subseteq V^{\ell(d+1)}$.
Hence $a=\lambda_1(a^q)\in V^{(\ell-1)(d+1)}$ by choice of $\ell$.
It follows that $\frac{h_0(a^q)}{h_0(a)}\ge\frac{\ell d}{(\ell-1)(d+1)}\ge\frac{\ell^2}{\ell^2-1}$ since $d\ge \ell$.
So we have $h_0(Fa)\ge Ch_0(a)$.
It follows that $h(Fx)\ge Ch(x)$ for any $x\in \mathbb{P}_n(K)$ except for the finitely many that only have representatives with each co-ordinate having $q$th powers in $V^{\ell^2+\ell}$.

So $h$ satisfies the condition of Proposition~\ref{height}, and we can conclude the existence of an $F^r$-spanning set for some $r>0$.
\end{proof}

\bigskip
\section{Generalised $F$-automaticity}
\label{section-genfset}
\noindent
Suppose $\Gamma$ is a finitely generated abelian group and $F$ is an endomorphism of $\Gamma$.
We will introduce in this section a natural notion of $F$-automaticity extending the classical notion when $F$ is an integer, and prove that under some mild conditions $F$-sets are $F$-automatic (thus generalising Theorem~\ref{fset-fauto}).

We begin with a preliminary notion.

\begin{definition}
Suppose $\Sigma$ is an $F$-spanning set for $\Gamma$.
A subset $S\subseteq \Gamma$ is {\em $(\Sigma ,F)$-automatic} if $\{w\in\Sigma^*:[w]_F\in S\}$ is a regular language.
In other words, there is a finite automaton $\mathcal A$ which takes as inputs finite words on the alphabet $\Sigma$, such that a word $x_0x_1\cdots x_m$ is accepted by $\mathcal A$ if and only if $x_{0}+Fx_{1}+\cdots +F^mx_{m}\in S$.
\end{definition}

Our automata are reading words from left to right.
As in the classical case we want to describe this more algebraically in terms of kernels: given $S\subseteq \Gamma$, the {\em $(\Sigma,F)$-kernel} is the collection of all subsets of $\Gamma$ of the form
$$S_{x_0\cdots x_{m-1}}:=\{x\in\Gamma:x_0+Fx_1+\cdots+F^{m-1}x_{m-1}+F^mx\in S\}$$
as we range over all $m\geq 0$ and $x_0,\dots,x_{m-1}\in \Sigma$.
(Note that $S_{\emptyset}=S$.)

\begin{lemma}
\label{finker}
A subset is $(\Sigma,F)$-automatic if and only if its $(\Sigma,F)$-kernel is finite.
\end{lemma}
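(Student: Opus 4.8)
This is the classical Myhill–Nerode-style equivalence adapted to our base-$F$ setting, so I would prove the two directions separately, passing through the ``states of an automaton'' reformulation on one side.

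First, suppose $S$ is $(\Sigma,F)$-automatic, witnessed by a finite automaton $\mathcal{A}$ reading words left to right. I would argue that for any finite word $w=x_0\cdots x_{m-1}$ on $\Sigma$, the set $S_w$ is determined entirely by the state $q$ that $\mathcal{A}$ reaches after reading $w$ from its initial state: indeed, for $x\in\Gamma$, membership $x\in S_w$ means $[x_0\cdots x_{m-1}]_F+F^m x\in S$, and since every element of $\Gamma$ has at least one $F$-expansion (property~(iii)), one checks that $x\in S_w$ iff $w$ followed by \emph{some} $F$-expansion of $x$ is accepted by $\mathcal{A}$. The subtlety here — and the one genuinely delicate point — is that $F$-expansions are not unique, so one must check this is well-defined: $x\in S_w$ should not depend on which expansion of $x$ one appends. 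This is exactly where the automatic hypothesis, together with the structure of $F$-spanning sets, does the work; the cleanest way is to note that if $u$ and $u'$ are two words with $[u]_F=[u']_F=x$ then (padding with $0$'s, using property~(ii) to stay inside $\Sigma$, and the carry lemmas of Lemma~\ref{carry}/\ref{inF}) one sees $[wu]_F=[wu']_F$, so both lie in $S$ or neither does — but to conclude the \emph{state} after $wu$ and after $wu'$ have the same acceptance behavior on all continuations, it is cleaner to simply observe that $S_w$ is a function of the reached state $q$ via $S_w=\{x:\exists v\ [v]_F=x \text{ and } \mathcal{A} \text{ accepts from } q \text{ on input } v\}$, and this formula visibly depends only on $q$. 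Since $\mathcal{A}$ has finitely many states, the $(\Sigma,F)$-kernel is finite.

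Conversely, suppose the $(\Sigma,F)$-kernel $\mathcal{K}=\{S_w:w\in\Sigma^*\}$ is finite. I would build an automaton $\mathcal{A}$ whose state set is $\mathcal{K}$, with initial state $S_\emptyset=S$, transition $\delta(S_w,x)=S_{wx}$ (well-defined since $S_{wx}$ depends only on $S_w$ and $x$: from the defining formula, $y\in S_{wx}$ iff $x+Fy\in S_w$, so indeed $S_{wx}$ is determined by $S_w$ and $x$), and accepting states those $S_w$ containing $0$ (since $[w]_F\in S$ iff $0\in S_w$, using $S_w=\{x:[w]_F+F^{\operatorname{length}(w)}x\in S\}$ and $[w0\cdots]_F=[w]_F$). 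Then $\mathcal{A}$ accepts exactly the words $w$ with $[w]_F\in S$, so $S$ is $(\Sigma,F)$-automatic.

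**Expected obstacle.** The one step requiring care is verifying well-definedness of transitions and of the relationship between kernel elements and automaton states in the presence of non-unique $F$-expansions — specifically, checking that $y\mapsto (x+Fy\in S_w)$ genuinely captures $S_{wx}$ and that appending $0$'s does not change $[\cdot]_F$ (immediate from the definition of $[\cdot]_F$, since trailing $0$ digits contribute $F^j\cdot 0=0$). Everything else is routine bookkeeping with the definitions of $(\Sigma,F)$-automaticity and the kernel.
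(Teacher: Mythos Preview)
Your proof is correct and follows essentially the same approach as the paper: in the forward direction show that $S_w$ depends only on the state reached by $\mathcal{A}$ after reading $w$, and in the backward direction build an automaton whose states are the kernel elements, with transitions $S_w\mapsto S_{wx}$ and accepting states those $S_w$ containing~$0$. Your detour through non-uniqueness of $F$-expansions (and the references to Lemmas~\ref{carry} and~\ref{inF}) is unnecessary---once you observe that $S_w$ is exactly the set of $[v]_F$ for words $v$ accepted by $\mathcal{A}$ started from state $I\cdot w$, the dependence on that state alone is immediate---but the argument is sound, and your explicit check that $S_{wx}$ depends only on $S_w$ and $x$ (via $y\in S_{wx}\iff x+Fy\in S_w$) is in fact more careful than the paper's.
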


\begin{proof}
The classical proof extends immediately to this setting, but we give details for completeness.

Suppose $\mathcal A$ is an automaton that recognises $S$.
Let $I$ be the initial state of $\mathcal A$.
Then for any word $w=x_0\cdots x_{m-1}\in\Sigma^*$, let $\mathcal A_w$ be the automaton that is the same as $\mathcal A$ except that its initial state is $I\cdot w$, the state $\mathcal A$ ends up in upon the input $w$.
Then it is not hard to see that $\mathcal A_w$ recognises $S_w$.
Indeed;
\begin{eqnarray*}
y_0\cdots y_{r-1}\text{ is accepted by }\mathcal A_w
&\iff&
x_0\cdots x_{m-1}y_0\cdots y_{r-1}\text{ is accepted by }\mathcal A\\
&\iff&
x_0+\cdots+F^{m-1}x_{m-1}+\\
& &F^m\big(y_0+\cdots+F^{r-1}y_{r-1}\big)\in S\\
&\iff&
y_0+\cdots+F^{r-1}y_{r-1}\in S_w
\end{eqnarray*}
Since $\mathcal A$ has only finitely many states, we get only finitely many distinct automata $\mathcal A_w$ as we range over all words $w$, and hence only finitely many subsets $S_w$.

Conversely, suppose $S$ has finite $(\Sigma,F)$-kernel.
Consider the automaton whose states are the elements of the kernel, whose initial state is $S=S_\emptyset$, whose accepting states are those $S_w$ which contain $0$, and whose transitions are given by $S_w\cdot x:=S_{wx}$.
Note that for any word $w$, this automaton excepts $w$ if and only if $0\in S_w$, which happens if and only if $[w]_F\in S$.
So it witnesses that $S$ is $(\Sigma, F)$-automatic.
\end{proof}

This allows us to show that automaticity does not depend on the choice of the $F$-spanning set.

\begin{proposition}
\label{welld}
Suppose $\Sigma$ is an $F$-spanning set, $\Theta$ is an $F^r$-spanning set for some $r>0$, and $S\subseteq \Gamma$.
If $S$ is $(\Sigma,F)$-automatic then it is $(\Theta,F^r)$-automatic.

In particular, taking $r=1$ and by symmetry, $(\Sigma,F)$-automaticity does not depend on the choice of $F$-spanning set $\Sigma$.
\end{proposition}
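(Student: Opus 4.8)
The plan is to argue via kernels, using Lemma~\ref{finker}. Since $S$ is $(\Sigma,F)$-automatic, its $(\Sigma,F)$-kernel is finite, so it suffices to show that the $(\Theta,F^r)$-kernel of $S$ is finite as well. A typical member of the latter is
$$T_w:=\{x\in\Gamma:\ [w]_{F^r}+F^{rm}x\in S\},$$
where $w=\theta_0\cdots\theta_{m-1}$ is a word on $\Theta$ of length $m$ and $[w]_{F^r}=\theta_0+F^r\theta_1+\cdots+F^{r(m-1)}\theta_{m-1}$. The idea is to rewrite $[w]_{F^r}$, modulo an additive error drawn from a fixed finite set, as a genuine $F$-expansion on $\Sigma$ of length exactly $rm$; then each $T_w$ becomes a translate of an actual member of the $(\Sigma,F)$-kernel by a uniformly bounded amount, and finiteness follows.

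First I would fix, using property~(iii) of $\Sigma$, an $F$-expansion on $\Sigma$ of each of the finitely many elements of $\Theta$, padding with $0$'s (property~(i)) so that they all have a common length $L$: say $\theta=v[0]+Fv[1]+\cdots+F^{L-1}v[L-1]$ with $v[a]\in\Sigma$. Substituting these into $[w]_{F^r}$ and collecting terms by total power of $F$ gives
$$[w]_{F^r}=\sum_{s=0}^{r(m-1)+L-1}F^sd_s,\qquad d_s=\sum_{(i,a)\,:\,ri+a=s}v_i[a],$$
and since for each $s$ there are at most $M:=\lceil L/r\rceil$ admissible pairs $(i,a)$, every $d_s$ lies in the finite set $D$ of $M$-fold sums of elements of $\Sigma$. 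The terms with $s\ge rm$ (at most $L$ of them) contribute $F^{rm}e$ with $e$ in the fixed finite set $E:=\{\sum_{t=0}^{L-1}F^tc_t:c_t\in D\}$, so
$$[w]_{F^r}+F^{rm}x=\Big(\sum_{s=0}^{rm-1}F^sd_s\Big)+F^{rm}(x+e).$$

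Next I would deal with the low-order part $\sum_{s=0}^{rm-1}F^sd_s$. Writing each $d_s$ as a sum of $M$ elements of $\Sigma$ (padding with $0$'s) exhibits it as $[\tilde{w}_1]_F+\cdots+[\tilde{w}_M]_F$ for words $\tilde{w}_j$ on $\Sigma$ of length $rm$; folding these together two at a time via Lemma~\ref{carry} (padding with trailing $0$'s at each step) collapses the sum to a single word $u$ on $\Sigma$ of length $rm+b$, with $b$ a constant depending only on $M$. Splitting off the top $b$ digits of $u$ as $F^{rm}g$, where $g$ lies in the fixed finite set $G:=\{[\sigma_0\cdots\sigma_{b-1}]_F:\sigma_j\in\Sigma\}$, and writing $v$ for the length-$rm$ prefix of $u$, we obtain
$$[w]_{F^r}+F^{rm}x=[v]_F+F^{rm}(x+e+g),$$
so that $T_w=-(e+g)+S_v$. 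Here $S_v$ belongs to the finite $(\Sigma,F)$-kernel of $S$ (since $v$ has length $rm$) and $e+g$ ranges over the fixed finite set $E+G$; hence only finitely many sets $T_w$ occur. By Lemma~\ref{finker}, $S$ is $(\Theta,F^r)$-automatic. The ``in particular'' clause then follows by applying this with $r=1$ to each of two $F$-spanning sets in turn.

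I expect the only real difficulty to be bookkeeping: one must check that the carry $e$ and the overflow $g$ genuinely lie in finite sets depending only on $\Sigma$, $\Theta$ and $r$ — not on $w$ — and that the word $v$ has length precisely $rm$, so that $S_v$ is literally one of the kernel sets rather than a shift of one. Note that injectivity of $F$ is not used here; the step where one might expect it, comparing $F$-expansions of different lengths, is sidestepped because everything is expressed as an identity of sums in $\Gamma$.
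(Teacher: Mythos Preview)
Your argument is correct and shares the paper's core strategy: use Lemma~\ref{finker} and show that every element of the $(\Theta,F^r)$-kernel lies in a fixed finite collection of translates of members of the $(\Sigma,F)$-kernel. The execution differs, however. The paper proceeds \emph{inductively}: it fixes $M$ so that every $\theta\in\Theta$ has an $F$-expansion on $\Sigma$ of length $M$, sets $\mathcal T=\{S_j-[w]_F:w\in\Sigma^{(M)}\}$, and shows $\mathcal T$ is closed under the single-digit operation $T\mapsto T_z$ for $z\in\Theta$; a single application of Lemma~\ref{carry} (to $[w]_F+[w']_F$ with $z=[w']_F$) suffices at each step, and peeling off the bottom $r$ digits of the resulting length-$(M{+}1)$ word lands you back in $\mathcal T$. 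Your approach is \emph{direct}: you expand all of $[w]_{F^r}$ at once, bound the digit multiplicities by $M=\lceil L/r\rceil$, and then repeatedly invoke Lemma~\ref{carry} to collapse the $M$ resulting $\Sigma$-words into one. Both arrive at $T_w=-(\text{bounded element})+S_v$; the paper's induction avoids the multi-word bookkeeping and makes the bounded-translate structure more transparent, while your version has the minor advantage of producing an explicit description of the finite set $E+G$ in one pass. One small wording slip: Lemma~\ref{carry} combines \emph{three} words into one (increasing length by one), not two; your ``two at a time'' still works by taking the third word to be $0^{rm}$, but you should say so.
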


\begin{proof}
We use the finite kernel characterisation of automaticity given by Lemma~\ref{finker}.
Let $S=S_0,\ldots ,S_t$ denote the elements of the $(\Sigma,F)$-kernel of $S$.
As $\Theta$ is finite, there is $M>0$ such that every element of $\Theta$ has an $F$-expansion with digits from $\Sigma$ of length $M$.
Let $\mathcal{T}$ denote all sets of the form $S_j-[w]_F$ where $j\in \{0,\ldots ,t\}$ and $w$ ranges over all words on $\Sigma$ of length $M$.
We claim that the $(\Theta,F^r)$-kernel of $S$ is contained in~$\mathcal{T}$.
Note that $S\in \mathcal T$ by choosing $j=0$ and $w$ the word $0^M$.
So it suffices to show that if $T\in\mathcal T$ and $z\in\Theta$ then $T_z:=\{y\in\Gamma:z+F^ry\in T\} \in\mathcal T$.
Suppose $T=S_j-[w]_F$ for some $j$ and some $w\in\Sigma^*$ of length $M$.
Write $z=[w']_F$ for some other word $w'$ on $\Sigma$ of length $M$ -- possible by the choice of $M$.
By Lemma~\ref{carry} there is $u=u_0u_1\dots u_M\in\Sigma^*$ such that $[w]_F+[w']_F=[u]_F$.
Then
\begin{eqnarray*}
T_z
&=&
\{y\in\Gamma:z+F^ry\in S_j-[w]_F\}\\
&=&
\{y\in\Gamma:[w']_F+F^ry+[w]_F\in S_j\}\\
&=&
\{y\in\Gamma:F^ry+[u]_F\in S_j\}\\
&=&
\{y\in\Gamma:[u_0\cdots u_{r-1}]_F+F^r(y+u_r+\cdots+F^{M-r}u_M)\in S_j\}\\
&=&
\{y\in\Gamma: y+u_r+\cdots+F^{M-r}u_M\in (S_j)_{u_0,\dots,u_{r-1}}\}\\
&=&
S_k-[u_r\cdots u_M]_F \ \ \ \text{ where }(S_j)_{u_0,\dots,u_{r-1}}=S_k.
\end{eqnarray*}
Note that if $M<r$ then $[u_r\cdots u_M]_F=0$.
In any case, as $r>0$ we have $[u_r\cdots u_M]_F\in\Sigma^{(M)}$, and so $T_z\in \mathcal T$.
As $\mathcal T$ is finite, we have shown that the $(\Theta,F^r)$-kernel of $S$ is finite, and hence $S$ is $(\Theta,F^r)$-automatic.
\end{proof}

\begin{definition}[$F$-automatic]
\label{defnauto}
Suppose $\Gamma$ is a finitely generated abelian group and $F$ is an endomorphism of $\Gamma$ such that $\Gamma$ admits an $F^r$-spanning set for some $r>0$.
A subset $S\subseteq \Gamma$ is defined to be {\em $F$-automatic} if for some $r>0$ and some (equivalently any by Proposition~\ref{welld}) $F^r$-spanning set $\Sigma$, $S$ is $(\Sigma,F^r)$-automatic.
\end{definition}

The following ensures that by this definition we are indeed generalising the case when $\Gamma=\mathbb Z$ and $F$ is (multiplication by) an integer.

\begin{proposition}
Suppose $\Gamma=\mathbb Z$ and $F$ acts as multiplication by an integer $k>1$.
The notion of $F$-automaticity defined here agrees with $k$-automaticity in the classical sense.
\end{proposition}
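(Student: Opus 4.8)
The plan is to compare the two notions by choosing a convenient $F$-spanning set for $\mathbb Z$ and checking that $(\Sigma,F)$-automaticity in our sense reduces to the classical reading of base-$k$ expansions. By Example~\ref{examplespan} the set $\Sigma=\{-(k-1),\dots,0,\dots,k-1\}$ is an $F$-spanning set for $\mathbb Z$ (with $F$ multiplication by $k$), so by Proposition~\ref{welld} it is enough to show that $S\subseteq\mathbb Z$ is $(\Sigma,F)$-automatic if and only if $S$ is $k$-automatic in the classical sense. The essential point is that a word $x_0x_1\cdots x_m\in\Sigma^*$ is being interpreted via $[x_0\cdots x_m]_F=x_0+kx_1+\cdots+k^mx_m$, i.e.\ as a base-$k$ expansion read \emph{least-significant-digit first} but with a redundant digit set that includes negatives, whereas the classical definition uses the digit set $\{0,1,\dots,k-1\}$ together with the signs $\{+,-\}$ as described for $\mathbb Z^d$ in Section~\ref{section-fset-fauto}.

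First I would recall the classical fact (see~\cite[Ch.~5]{shallit} and the $\mathbb Z$-version discussed in Section~\ref{section-fset-fauto}) that $k$-automaticity of a subset of $\mathbb Z$ is insensitive to the particular redundant digit alphabet used, provided the evaluation map $w\mapsto\sum_i (\text{digit}_i)k^i$ is fixed and the alphabet is finite; equivalently, $S$ is $k$-automatic iff the set of base-$k$ words (in any fixed finite digit set spanning $\mathbb Z$) evaluating into $S$ is regular. Concretely, one exhibits a length-preserving finite-state transduction converting words on $\Sigma$ to words on the standard signed alphabet with the same value, by the usual ``carry'' bookkeeping: scanning from the high-order end one tracks a bounded carry because $\Sigma$ is finite, and property~(iv) of Definition~\ref{spanning} is exactly the kind of bounded-carry closure that makes the carry set finite. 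Since regular languages are closed under inverse finite-state transductions (equivalently, images and preimages under finite transducers), regularity of one word-language transfers to the other, and hence $(\Sigma,F)$-automaticity coincides with $k$-automaticity. Alternatively, and perhaps more cleanly, I would invoke Fact~\ref{preauto}: $S\subseteq\mathbb Z$ is $k$-automatic (classical) iff its preimage under a surjection $\mathbb Z\to\mathbb Z$ is, so one is free to pick the presentation, and then match kernels directly using Lemma~\ref{finker} — the $(\Sigma,F)$-kernel $\{S_{x_0\cdots x_{m-1}}\}$ is visibly the same family of ``residual sets'' that appears in the classical $k$-kernel, up to the finite reindexing caused by the redundancy of $\Sigma$.

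The step I expect to be the main obstacle is a bookkeeping subtlety rather than a conceptual one: in our setup automata read words \emph{left to right} with $[x_0\cdots x_m]_F=x_0+Fx_1+\cdots+F^mx_m$, so $x_0$ is the \emph{least} significant digit and the input is the base-$k$ expansion reversed relative to the usual most-significant-first convention, and moreover the digit set is redundant and signed. One must therefore be careful to (a) reconcile the reading direction — using that regular languages are closed under reversal — and (b) handle the redundancy, i.e.\ that one integer has many $\Sigma$-words, versus the classical convention of a canonical expansion; the cleanest fix is to show that the $\Sigma$-word language for $S$ is regular iff the canonical-word language for $S$ is, via a finite transducer that normalises a redundant $\Sigma$-word to a canonical signed-digit word of the same length up to a bounded pad, whose finiteness of states is guaranteed by the finiteness of $\Sigma$ and property~(iv). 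Once the reading direction and redundancy are dispatched, the equivalence of the two automaticity notions is immediate, and I would state it as such, citing~\cite{shallit} and~\cite{adambell} for the classical closure facts used.
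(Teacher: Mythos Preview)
Your proposal has a real gap at the very first step: Example~\ref{examplespan} only asserts that $\{-(k-1),\dots,k-1\}$ is an $F$-spanning set when $k\ge 4$, so for $k=2,3$ you have no $F$-spanning set to work with and cannot invoke Proposition~\ref{welld} as stated. The paper handles this by passing to $F^r$ with $k^r\ge 4$ (using Lemmas~\ref{expandspanmore} and~\ref{welld} to move between powers), and you need to do the same; this is not merely cosmetic, since Definition~\ref{defnauto} is phrased in terms of \emph{some} $F^r$-spanning set, and none need exist for $r=1$.

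Beyond that, your strategy is genuinely different from the paper's. You propose to pass between the redundant signed-digit alphabet $\Sigma$ and the classical alphabet via a bounded-carry finite transducer, together with closure of regular languages under reversal and (inverse) transduction. The paper instead works directly with kernels: one direction is trivial (the classical $k^r$-kernels are among the $(\Sigma,F^r)$-kernels, so finiteness of the latter gives finiteness of the former), while for the converse it shows by an explicit case analysis that every $(\Sigma,F^r)$-kernel lies in the finite set consisting of the nonnegative kernels together with their translates by~$1$. Your transducer route is sound in principle and arguably more conceptual, but as written it is only a sketch: the normalisation from a signed $\Sigma$-word to a canonical signed base-$k$ word is not length-preserving (there is terminal carry and a global sign that is only determined at the end), and you would need to spell out the transducer carefully, particularly how the sign is handled for~$\mathbb Z$. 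Your closing remark that the $(\Sigma,F)$-kernel is ``visibly'' the classical kernel ``up to finite reindexing'' is precisely the content of the paper's computation and is not obvious --- that is where the work lies.
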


\begin{proof}
Suppose $S\subseteq \mathbb Z$ is $F$-automatic in the sense of Definition~\ref{defnauto}.
Then, for some $r>0$, and any $F^r$-spanning set $\Sigma$ of $\mathbb Z$, $S$ is $(\Sigma, F^r)$-automatic.
Note that by Lemmas~\ref{expandspanmore} and~\ref{welld}, for any $s>0$, $\Sigma^{(s)}$ is an $F^{rs}$-spanning set and $S$ is $(\Sigma^{(s)}, F^{rs})$-automatic.
So replacing $r$ with $rs$, we may assume that $k^r\geq 4$.
In that case,  we can take $\{-(k^r-1),\ldots ,0,\ldots ,k^r-1\}$ for $\Sigma$ by Example~\ref{examplespan}.
We will show that $S$ is $k^r$-automatic in the classical sense; it is well known (and easy to see) that $k^r$-automaticity implies $k$-automaticity in the classical sense.
By~\cite[Proposition~5.1]{adambell}, this is equivalent to showing that $S$ has only finitely many {\em $k^r$-kernels}.
These are just $(\Sigma, F^r)$-kernels in our sense, except that they are of the form $S_{x_0\cdots x_{m-1}}$ where all the $0\leq x_i<k^r$.
Let us call them the ``nonnegative" $(\Sigma, F^r)$-kernels.
In any case, by Lemma~\ref{finker}, there are only finitely many $(\Sigma, F^r)$-kernels; and hence there are only finitely many nonnegative ones.
So $S$ is $k^r$-automatic, and hence $k$-automatic, classically.

Suppose, for the converse, that $S$ is $k$-automatic in the classical sense.
Let $r>0$ be such that $k^r\geq 4$.
So $\Sigma:=\{-(k^r-1),\ldots ,0,\ldots ,k^r-1\}$ is an $F^r$-spanning set for $\mathbb Z$.
We show that $S$ is $(\Sigma,F^r)$-automatic.
By assumption, $S$ has only finitely many nonnegative $(\Sigma, F^r)$-kernels.
Let $\mathcal T$ be the collection of these nonnegative $(\Sigma, F^r)$-kernels together with their translations by the integer $1$.
We claim these are all the $(\Sigma, F^r)$-kernels of $S$.
We need to show that for any $T\in \mathcal T$ and any $a\in\Sigma$, the kernel $T_a\in\mathcal T$.
There are several cases to consider.

Suppose $T$ is a nonnegative $(\Sigma, F^r)$-kernel of $S$.
If  $a\ge 0$ then $T_a$ is another  nonnegative $(\Sigma, F)$-kernel of $S$, and hence is again in $\mathcal T$.
If $a<0$ then $0<a+k^r<k^r$ and
$T_a=\{x\in\mathbb Z:a+k^rx\in T\}=\{x\in\mathbb Z:(a+k^r)+k^r(x-1)\in T\}=1+T_{a+k^r}$
which by construction is also in $\mathcal T$.

Now suppose $T=1+T'$ where $T'$ is a nonnegative $(\Sigma, F^r)$-kernel of $S$.
If $a\ge1$ then $0\le a-1<k^r$ and
$$T_a=\{x\in\mathbb Z:a+k^rx\in 1+T'\}=\{x\in\mathbb Z:a-1+k^rx\in T'\}=T'_{a-1}$$
which is an element of $\mathcal T$.
If $-k^r+1<a<1$, then $-k^r< a-1<0$ and
$$T_a=(1+T')_a=T'_{a-1}$$
which is in $\mathcal T$ by the case considered in the previous paragraph.
Finally, if $a=-k^r+1$ then 
$T_a=\{x\in\mathbb Z:a+k^rx\in 1+T'\}=\{x\in\mathbb Z:-k^r+k^rx\in T'\}=1+T'_0$
which is also in $\mathcal T$.

So $S$ has only finitely many $(\Sigma, F^r)$-kernels and is thus, by Lemma~\ref{finker}, $F$-automatic in our sense.
\end{proof}

\begin{remark}
Using a similar argument, with a somewhat more complicated decomposition over quadrants, one can prove a multidimensional version of the above proposition for $\mathbb{Z}^d$ when $F$ is multiplication by $k>1$.
\end{remark}

In order to deal with the fact that elements of $\Gamma$ do not necessarily have unique $F$-expansions, the following lemma will be useful.

\begin{lemma}
\label{sim}
Suppose $F$ is injective and $\Sigma$ is an $F$-spanning set for $\Gamma$.
\begin{itemize}
\item[(a)]
Consider the equivalence relation $\sim$ on $\Sigma^*$ given by
$$w\sim u \iff {\rm length}(w)={\rm length}(u)\text{ and }[w]_F=[u]_F.$$
Then $\sim$, as a  subset of $(\Sigma\times\Sigma)^*$, is regular.
\item[(b)]
Let $G$ denote the subset of $\Sigma^*\times\Sigma^*\times\Sigma^*$ made up of triples of words $(u,v, w)$, all of the same length, satisfying $[u]_F+[v]_F=[w]_F$.
Then $G$ is regular as a subset of $(\Sigma\times\Sigma\times\Sigma)^*$. 
\end{itemize}
\end{lemma}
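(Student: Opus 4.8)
The plan is to recognise both relations by a \emph{carry automaton} that reads the words synchronously from left to right, i.e.\ least significant digit first. Observe first that a word on the alphabet $\Sigma\times\Sigma$ is literally a sequence of pairs, hence encodes exactly a pair of $\Sigma$-words of the same length; so part~(a) asserts only that the set of pairs $(x_0\cdots x_m,\,y_0\cdots y_m)$ with $\sum_{i=0}^m F^i(x_i-y_i)=0$ is regular, and part~(b) that the set of triples $(a_0\cdots a_m,\,b_0\cdots b_m,\,c_0\cdots c_m)$ with $\sum_{i=0}^m F^i(a_i+b_i-c_i)=0$ is regular. Both are instances of the following statement: for a finite set $E\subseteq\Gamma$ (here $E=\Sigma-\Sigma$, resp.\ $E=\Sigma+\Sigma-\Sigma$), the language $\{e_0\cdots e_m\in E^*:\sum_{i=0}^m F^ie_i=0\}$ is regular.

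For the automaton, after reading $e_0,\dots,e_i$ set $s_i:=\sum_{j=0}^i F^je_j$ with $s_{-1}:=0$, so that $s_i=s_{i-1}+F^ie_i$. If $s_m=0$ then $s_i=-\sum_{j=i+1}^m F^je_j\in F^{i+1}\Gamma$ for every $i$, so by injectivity of $F$ there is a unique carry $\gamma_i$ with $F^{i+1}\gamma_i=s_i$; moreover $\gamma_{-1}=0$, $F\gamma_i=\gamma_{i-1}+e_i$, and $s_m=0$ if and only if $\gamma_m=0$. I will take as states a finite set of carries together with an absorbing non-accepting ``dead'' state, with $0$ as the initial and sole accepting state, and the transition rule: from $\gamma$ on input $e$, move to the unique $\gamma'$ with $F\gamma'=\gamma+e$ when such a $\gamma'$ exists in the chosen set of carries, and to ``dead'' otherwise. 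Granting that the carries arising from a genuine vanishing word stay inside the chosen finite set, this automaton recognises exactly the desired language, so everything reduces to two carry-boundedness claims.

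For part~(a) I claim every carry lies in $\Sigma$: indeed $\gamma_{-1}=0\in\Sigma$, and if $\gamma_{i-1}\in\Sigma$ then $\gamma_{i-1}+e_i=\gamma_{i-1}+x_i+(-y_i)$ is a sum of three elements of $\Sigma$ (using that $\Sigma$ is symmetric), so if it lies in $F\Gamma$ then condition~(v) of Definition~\ref{spanning} produces $t\in\Sigma$ with $\gamma_{i-1}+e_i=Ft$, whence $\gamma_i=t\in\Sigma$ by injectivity. Thus the state set $\Sigma\cup\{\text{dead}\}$ works.

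For part~(b) I claim every carry lies in the finite set $\Sigma+\Sigma$ (which contains $\Sigma$, as $0\in\Sigma$): again $\gamma_{-1}=0\in\Sigma+\Sigma$, and if $\gamma_{i-1}=p+q$ with $p,q\in\Sigma$ then $\gamma_{i-1}+e_i=p+q+a_i+b_i+(-c_i)$ is a sum of \emph{five} elements of $\Sigma$, so the full strength of condition~(iv) gives $t,t'\in\Sigma$ with $\gamma_{i-1}+e_i=t+Ft'$; if this lies in $F\Gamma$ then $t=(t+Ft')-Ft'\in F\Gamma$, say $t=Fs$, whence $s\in\Sigma$ by condition~(ii), so $\gamma_{i-1}+e_i=F(s+t')$ and $\gamma_i=s+t'\in\Sigma+\Sigma$ by injectivity. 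Thus $(\Sigma+\Sigma)\cup\{\text{dead}\}$ works — and this is exactly where $(iv)$ is needed for five summands rather than four, as the remark before the lemma anticipates; alternatively one could deduce part~(a) from part~(b) by intersecting $G$ with the regular set of triples whose middle word is $0\cdots0$ and projecting onto the first and third coordinates. The only real obstacle is these two boundedness claims, and conditions~(ii), (iv), (v) together with injectivity of $F$ are precisely what make them go through; the construction of the automata and the verification that they recognise $\sim$ and $G$ are then routine.
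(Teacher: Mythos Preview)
Your argument is correct and follows the same overall strategy as the paper --- a carry automaton reading the words synchronously from the least significant digit --- but your implementation differs in a way worth noting. The paper carries a \emph{pair} $(x,y)\in\Sigma^2$, maintaining the invariant $[w]_F-[u]_F=F^{n-1}x+F^ny$ after $n$ symbols, and advances via property~(iv) applied to the four summands $\hat x+y+a-b$ (five summands $\hat x+y+a+b-c$ for part~(b)); acceptance is $x+Fy=0$. You instead carry a single element $\gamma$ with $s_i=F^{i+1}\gamma$, which lets you handle part~(a) using only property~(v) on three summands and gives a smaller state set $\Sigma\cup\{\text{dead}\}$ rather than $\Sigma^2$. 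For part~(b) your carry lives in $\Sigma+\Sigma$, and you reach it by first applying~(iv) to five summands and then peeling off the $F$ using~(ii); the paper's two-component carry absorbs this step automatically. Both executions confirm the remark preceding the lemma that the full five-term strength of~(iv) is needed precisely in part~(b). Your unified formulation --- recognising $\{e_0\cdots e_m\in E^*:\sum F^ie_i=0\}$ for a fixed finite $E\subseteq\Gamma$ --- is a nice abstraction, and your observation that (a) follows from (b) by restricting the middle coordinate to $0^*$ is a tidy alternative to proving (a) separately.
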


\begin{proof}
For part~(a) we describe a finite automaton $\mathcal A$ which takes as input the pair of words $w=x_0x_1\cdots x_{n-1}$ and $u=y_0y_1\cdots y_{n-1}$ on $\Sigma$, and reads it left to right as the word 
$(x_0,y_0) (x_1,y_1)\cdots (x_{n-1},y_{n-1})$
on $\Sigma\times \Sigma$.
We want $\mathcal A$ to accept $(w,u)$ if and only if $x_0+Fx_1+\cdots+F^{n-1}x_{n-1}=y_0+Fy_1+\cdots+F^{n-1}y_{n-1}$.

The states of $\mathcal A$ are given by all pairs $(x,y)\in\Sigma^2$.
The initial state is $(0,0)$.
The accepting states are those $(x,y)$ such that $x+Fy=0$.
If $x\notin F(\Sigma)$ then the state $(x,y)$ is a terminal rejecting state.
So assuming $x\in F(\Sigma)$, we need to say how $\mathcal A$ transitions from the state $(x,y)$ on an input $(a,b)\in\Sigma\times\Sigma$.
Choose $\hat x\in \Sigma$ such that $x=F(\hat x)$, and choose $x',y'\in\Sigma$ such that $\hat x+y+a-b=x'+F(y')$, using property~(iv) of spanning sets (Defintion~\ref{spanning}).
Then $\mathcal A$ transitions from the state $(x,y)$ to the state $(x',y')$ when it reads $(a,b)$.

Note that by property~(ii) of spanning sets, $x\in F(\Sigma)$ is equivalent to $x\in F(\Gamma)$, for all $x\in\Sigma$.

{\em Claim: If $n={\rm length}(w)={\rm length}(u)$ and on input $(w,u)$ the terminal state of $\mathcal A$ is $(x,y)$ then either $x\notin F(\Sigma)$ or $[w]_F-[u]_F=F^{n-1}x+F^ny$.}
We prove this by induction on $n$, the case of $n=0$ being vacuous.
Suppose $w=w'a$ and $u=u'b$ where $w'$ and $u'$ are of length $n$.
Let $(x',y')$ be the state of $\mathcal A$ upon input $(w',u')$ and $(x, y)$ be the state of $\mathcal A$ upon input $(w,u)$.
If $x'\notin F(\Sigma)$ then $(x',y')$ is a terminal rejecting state and so $x=x'\notin F(\Sigma)$.
So assume $x'\in F(\Sigma)$.
Then by construction we have $\hat x+y'+a-b=x+Fy$ where $x'=F(\hat x)$ and $\hat x\in \Sigma$.
So,
\begin{eqnarray*}
[w]_F-[u]_F
&=&
[w']_F-[u']_F+(F^na-F^nb)\\
&=&
F^{n-1}x'+F^n y'+(F^na-F^nb)\ \ \ \ \text{ by the inductive hypothesis}\\
&=&
F^n(\hat x+y'+a-b)\\
&=&
F^nx+F^{n+1}y
\end{eqnarray*}
as desired.

Let $(x,y)$ be the state of $\mathcal A$ upon input $(w,u)$.
We show that $(x,y)$ is an accepting state of $\mathcal A$ if and only if $[w]_F=[u]_F$.
The left-to-right direction is now clear: if $(x,y)$ is an accepting state then $x+Fy=0$ by construction, and since $x=F(-y)$, the Claim gives us that $[w]_F-[u]_F=F^{n-1}(x+Fy)=0$.

Conversely, suppose $[w]_F=[u]_F$.
If $x\in F(\Sigma)$ then by the Claim we have $0=[w]_F-[u]_F=F^{n-1}(x+Fy)$ and so by injectivity of $F$ we get $x+Fy=0$, and it follows that $(x,y)$ is an accepting state as desired.
So assume toward a contradiction that $x\notin F(\Sigma)$.
Then $(x,y)$ is a terminal rejecting state.
Let's consider the first time $\mathcal A$ gets there on input $(w,u)$.
That is, write $w=w'aw''$ and $u=u'bu''$ where the state of $\mathcal A$ on input $(w',u')$ is $(x',y')\neq (x,y)$, but on input $(w'a,u'b)$ the state is $(x,y)$.
It must be that $(x',y')$ is not terminal rejecting, so there is $\hat x\in\Sigma$ such that $x'=F(\hat x)$ and $\hat x+y'+a-b=x+Fy$.
Let $m={\rm length}(w')={\rm length}(u')$.
We have,
\begin{eqnarray*}
0
&=&
[w]_F-[u]_F\\
&=&
[w']_F-[u']_F+F^m(a-b)+F^{m+1}([w'']_F-[u'']_F)\\
&=&
F^{m-1}(x')+F^m(y')+F^m(a-b) + F^{m+1}([w'']_F-[u'']_F) \ \text{ by the Claim}\\
&=&
F^m(\hat x+y'+a-b)+ F^{m+1}([w'']_F-[u'']_F)\\
&=&
F^mx + F^{m+1}(y+[w'']_F-[u'']_F)
\end{eqnarray*}
so that $F^mx\in F^{m+1}(\Gamma)$.
By injectivity of $F$ we get that $x\in F(\Gamma)$, and hence $x\in F(\Sigma)$ by property~(ii).
This contradiction completes the proof of the Lemma.

Part~(b) has a rather similar proof.
Our automaton $\mathcal B$ that will accept $G$ is defined as follows.
The states of $\mathcal B$ are the same as those of $\mathcal A$: they are given by all pairs $(x,y)\in\Sigma^2$, with initial state $(0,0)$, and accepting states those $(x,y)$ such that $x+Fy=0$.
Again, if $x\notin F(\Sigma)$ then the state $(x,y)$ is a terminal rejecting state.
So assuming $x\in F(\Sigma)$, we need to say how $\mathcal B$ transitions from the state $(x,y)$ on an input $(a,b,c)\in\Sigma\times\Sigma\times \Sigma$.
Choose $\hat x\in \Sigma$ such that $x=F(\hat x)$, and choose $x',y'\in\Sigma$ such that $\hat x+y+a+b-c=x'+F(y')$, using property~(iv) of spanning sets (Defintion~\ref{spanning}).
Then $\mathcal B$ transitions from the state $(x,y)$ to the state $(x',y')$ when it reads $(a,b,c)$.
The corresponding claim in this case is that {\em If $(u,v,w)$ is a triple of words of length $n$, and on input $(u,v,w)$ the terminal state of $\mathcal B$ is $(x,y)$ then either $x\notin F(\Sigma)$ or $[u]_F+[v]_F-[w]_F=F^{n-1}x+F^ny$.}
The proof is by induction on $n$, is very much like part~(a), and is left to the reader.
From this claim, one shows, again exactly as in part~(a), that $\mathcal B$ works.
\end{proof}

We can now prove some closure properties for $F$-automatic sets.

\begin{proposition}
\label{preservation}
Suppose $F$ is injective.
The $F$-automatic subsets of $\Gamma$ satisfy the following closure properties:
\begin{itemize}
\item[(a)]
Finite unions of $F$-automatic sets are $F$-automatic.
\item[(b)]
If $r>0$, $\Sigma$ is an $F^r$-spanning set for $\Gamma$, and $\mathcal L\subseteq \Sigma^*$ is a regular language, then $\{[u]_{F^r}:u\in\mathcal L\}$ is $(\Sigma,F^r)$-automatic (and hence $F$-automatic).
\item[(c)]
Finite set sums of $F$-automatic sets are $F$-automatic.
\item[(d)]
If $\Gamma$ has an $F^r$-spanning set for some $r>0$ then singletons are $F$-automatic.
\item[(e)]
For any $r>0$ for which there exists an $F^r$-spanning set, $F^r$-invariant subgroups of $\Gamma$ are $F$-automatic.
\end{itemize}
\end{proposition}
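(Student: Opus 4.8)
The plan is to prove all five parts through the finite-kernel characterisation of Lemma~\ref{finker}, using Proposition~\ref{welld} to place any finite collection of the sets in question over a single common $F^r$-spanning set $\Sigma$, and using Lemma~\ref{sim} --- the regularity of the ``equal value'' relation $\sim$ and of the ``addition'' relation $G$, which we apply with $F^r$ in place of $F$; this is legitimate since $F$, hence $F^r$, is injective and $\Sigma$ is an $F^r$-spanning set. The only structural point exploited repeatedly is that appending or deleting a trailing digit $0$ leaves the value $[\,\cdot\,]_{F^r}$ of a word unchanged, so any comparison between words of different lengths can be reduced, after zero-padding, to a comparison between words of the same length; everything else is the standard closure of regular languages under union, product, intersection, projection onto a coordinate of a product alphabet, and right quotient by $\{0\}^*$.

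Granting this, parts (a)--(d) are bookkeeping. For (a), $\{w:[w]_{F^r}\in S\cup S'\}$ is the union of the two corresponding regular languages (equivalently, the $(\Sigma,F^r)$-kernel of $S\cup S'$ embeds into the finite set of pairwise unions of kernel elements of $S$ and of $S'$). For (b), with $T:=\{[u]_{F^r}:u\in\mathcal L\}$, the set $\{w:[w]_{F^r}\in T\}$ is the right quotient by $\{0\}^*$ of the projection onto the second coordinate of $\sim$ intersected with the cylinder requiring the first coordinate to lie in $\mathcal L\cdot\{0\}^*$. Part (d) is the case $\mathcal L=\{u\}$ of (b), choosing $u$ by property~(iii) of a spanning set so that $[u]_{F^r}$ is the prescribed point. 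Part (c) is identical to (b) but with the addition relation $G$ of Lemma~\ref{sim}(b) replacing $\sim$: after moving $S,S'$ over a common $\Sigma$ and zero-padding, $\{w:[w]_{F^r}\in S+S'\}$ is the right quotient by $\{0\}^*$ of the projection onto the third coordinate of $G$ intersected with the cylinder requiring its first two coordinates to lie in the regular languages defining $S$ and $S'$; the case of more than two summands follows by induction.

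Part (e) is the substantial one and I expect it to be the main obstacle. Fix an $F^r$-spanning set $\Sigma$; by Lemma~\ref{finker} it suffices to show that the $(\Sigma,F^r)$-kernel of $H$ is finite, i.e. that $\{H_w:w\in\Sigma^*\}$ is finite, where $H_w=\{x\in\Gamma:[w]_{F^r}+F^{r|w|}x\in H\}$. Since $H$ is $F^r$-invariant, the subgroups $(F^{rm})^{-1}(H)$ form an ascending chain, which stabilises --- $\Gamma$ being a Noetherian $\mathbb Z$-module --- at some $m_0$, to a subgroup $H^\star\supseteq H$; one checks that $H^\star$ is again $F^r$-invariant, that $(F^{rm})^{-1}(H^\star)=H^\star$ for all $m$, and hence that the endomorphism $\overline F$ induced by $F^r$ on $\Gamma/H^\star$ is injective. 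For $|w|\ge m_0$, any two elements of $H_w$ differ by an element of $(F^{rm})^{-1}(H)=H^\star$, so $H_w$ is either empty or a single coset of $H^\star$; and since $H\subseteq H^\star$ and $\overline F$ is injective, when $H_w$ is nonempty it is the full preimage in $\Gamma$ of the unique point $p\in\Gamma/H^\star$ with $\overline F^{\,m}(p)=-\overline{[w]_{F^r}}$. Thus it is enough to see that only finitely many such points $p$ arise. When $H$ has finite index in $\Gamma$ this is immediate, as $\Gamma/H^\star$ is then finite. In general the points $p$ that arise are exactly the nonempty elements of the kernel of the singleton $\{0\}$ computed in $\Gamma/H^\star$ with respect to $\overline F$, so the claim follows from part (d) applied to $\Gamma/H^\star$ --- once one knows $\Gamma/H^\star$ again carries an $\overline F$-spanning set. (The finitely many $H_w$ with $|w|<m_0$ contribute nothing more.)

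The delicate points in (e) are the failure of surjectivity of $F^r$, which is why $F^{rm}$ can miss cosets and so makes the combinatorics of which $H_w$ vanish nontrivial, and the transfer of the singleton case through the quotient $\Gamma/H^\star$: one must verify that an $\overline F$-spanning set for $\Gamma/H^\star$ exists, or, lacking a clean statement to that effect, re-expand the images of the digits of $\Sigma$ over a genuine spanning set of $\Gamma/H^\star$ by a finite-state transduction. Parts (a)--(d), by contrast, are entirely routine once Lemmas~\ref{finker} and~\ref{sim} are in hand.
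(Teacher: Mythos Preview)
Your arguments for parts (a)--(d) are correct and essentially coincide with the paper's: the same zero-padding trick, the same appeal to Lemma~\ref{sim} for $\sim$ and $G$, and the same closure properties of regular languages.

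The gap is in part (e). Your reduction is fine up to the point where, for $|w|\ge m_0$, the kernel element $H_w$ is either empty or a single coset of $H^\star$, determined by the unique $p\in\Gamma/H^\star$ with $\overline F^{\,m}(p)=-\overline{[w]_{F^r}}$. The problem is the next step: you invoke part~(d) in $(\Gamma/H^\star,\overline F)$ to bound the set of such $p$'s, but (d) requires an $\overline F$-spanning set on the quotient, and you do not have one. The image $\overline\Sigma$ of $\Sigma$ need not satisfy conditions~(ii) and~(v) of Definition~\ref{spanning}: for instance, $\bar x_1+\bar x_2+\bar x_3\in\overline F(\Gamma/H^\star)$ only says $x_1+x_2+x_3\in F^r\Gamma+H^\star$, which does not put $x_1+x_2+x_3$ in $F^r\Gamma$ and so does not let you apply~(v) in $\Gamma$. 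Nor does the paper supply any existence result for spanning sets on quotients --- Proposition~\ref{height} goes through height functions, which do not obviously descend. Your fallback of ``re-expanding via a finite-state transduction'' still presupposes a target spanning set on $\Gamma/H^\star$ to expand into, so it does not escape the difficulty.

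The paper avoids the quotient entirely. It uses the same ascending chain to pass from $H$ to $H^\star$, but in the other direction: one first proves automaticity of $H^\star$ (where $F^{-1}(H^\star)=H^\star$), then observes that $F^{rm_0}(H^\star)\subseteq H$ is automatic (applying $F$ to an automatic set is automatic, via part~(b) applied to the language $0\mathcal L$) and has finite index in $H$, so $H$ is a finite union of translates of it. For $H^\star$ itself the paper argues directly in $\Gamma$: setting $N_0:=F^r\Gamma\cap H^\star$, which has finite index in $H^\star$, one fixes coset representatives $a_1,\dots,a_s$ and an $m$ with each $a_i\in\Sigma^{(m)}$; then every kernel element $(H^\star)_x$ is shown, using Lemma~\ref{inF}, to lie in the finite collection of finite unions of cosets $H^\star+[v]_{F^r}$ with $|v|=m$. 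No new spanning set is needed.
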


\begin{proof}
Suppose $S_1$ is $(\Sigma_1, F^r)$-automatic and $S_2$ is $(\Sigma_2, F^s)$-automatic.
Let $m=rs$.
Then, by Lemma~\ref{expandspanmore}, $\Sigma_1^{(s)}$ is an $F^m$-spanning set, and by Lemma~\ref{welld}, both $S_1$ and $S_2$ are $(\Sigma_1^{(s)}, F^m)$-automatic.
Their union is $(\Sigma_1^{(s)}, F^m)$-automatic since the union of regular languages is regular.

It suffices to prove~(b) for $r=1$.
Let $S:=\{[u]_F:u\in\mathcal L\}$.
Note that the concatenation of  $\mathcal L$ with the language $0^*$, remains regular and still satisfies the above property since $[u]_F=[u0]_F$ for any word $u$.
So we may as well assume that $\mathcal L$ is closed under appending $0$'s on the right.
Now consider
$\mathcal L':=\{w\in\Sigma^*:\exists u\in\mathcal L, w\sim u\}$.
By Lemma~\ref{sim}(a) we know that $\sim$ is regular in $\Sigma^*\times\Sigma^*$, and by assumption $\mathcal L$ is regular, so from basic closure properties of regularity (see, for example, \cite[1.3]{shallit}) we get that $\mathcal L'$ is regular.
It follows that
$$\hat{\mathcal L}:=\{w\in\Sigma^*:w0^i\in\mathcal L',\text{ for some }i\geq0\}$$
is regular; this is just the quotient of the regular language $\mathcal L'$ by the regular language~$0^*$ -- see~\cite[4.1.6]{shallit}.
We now claim that $\hat{\mathcal L}=\{w\in\Sigma^*:[w]_F\in S\}$, from which it follows by definition that $S$ is $F$-automatic.
If $[w]_F\in S$ then $[w]_F=[u]_F$ for some $u\in\mathcal L$.
If ${\rm length}(u)\leq{\rm length}(w)$, then letting $i={\rm length}(w)-{\rm length}(u)$ we have that $[w]_F=[u0^i]_F$ and hence $w\sim u0^i$.
Since $u0^i\in\mathcal L$ also, we have $w\in\mathcal L'\subseteq\hat{\mathcal L}$, as desired.
If ${\rm length}(u)\geq{\rm length}(w)$ then letting $i={\rm length}(u)-{\rm length}(w)$ we have that $w0^i\sim u$, and so $w0^i\in\mathcal L'$, and so $w\in\hat{\mathcal L}$, as desired.
Conversely, suppose $w\in\hat{\mathcal L}$.
Then $w0^i\sim u$ for some $i\geq 0$ and $u\in\mathcal L$.
Hence $[w]_F=[w0^i]_F=[u]_F\in S$.

For part~(c), by the same trick as in part~(a) -- that is, using Lemmas~\ref{expandspanmore} and~\ref{welld} -- we reduce to proving that if $S_1$ and $S_2$ are both $(\Sigma,F^r)$-automatic for some $r>0$ and some $F^r$-spanning set $\Sigma$, then so is $S_1+S_2$.
For this latter statement, it suffices to consider the case when $r=1$.
We now proceed as in part~(b), except that we use Lemma~\ref{sim}(b) instead of Lemma~\ref{sim}(a).
Let $\mathcal L_i:=\{w\in\Sigma^*:[w]_F\in S_i\}$ for $i=1,2$ be the corresponding regular languages.
Let
$$\mathcal L':=\{w\in\Sigma^*:\exists u\in\mathcal L_1, \exists v\in\mathcal L_2, (u,v,w)\in G\},$$
where $G$ is as in Lemma~\ref{sim}(b).
So we have that $\mathcal L'$ is regular, and again the quotient of $\mathcal L'$ by $0^*$, let's call it $\hat{\mathcal L}$, is regular.
We claim $\hat{\mathcal L}=\{w\in\Sigma^*:[w]_F\in S_1+S_2\}$, from which it will follow that $S_1+S_2$ is $F$-automatic.
If $[w]_F\in S_1+S_2$ then $[w]_F=[u]_F+[v]_F$ for some $u\in\mathcal L_1$ and $v\in \mathcal L_2$.
If ${\rm max}({\rm length}(u),{\rm length}(v))\leq{\rm length}(w)$, then after padding $u$ and $v$ with zeroes --  which note preserves $\mathcal L_1$ and $\mathcal L_2$ -- we may assume that $u, v$, and $w$ all have the same length.
So $(u,v,w)\in G$.
Hence $w\in\mathcal L'\subseteq\hat{\mathcal L}$, as desired.
If ${\rm max}({\rm length}(u),{\rm length}(v))\geq{\rm length}(w)$ then after padding one of the $u$ and $v$ with zeroes we may assume there is an $i\geq 0$ such that $(u,v,w0^i)\in G$.
So $w0^i\in\mathcal L'$, and we get $w\in\hat{\mathcal L}$, as desired.
Conversely, suppose $w\in\hat{\mathcal L}$.
Then $(u,v,w0^i)\in G$ for some $i\geq 0$ and $u\in\mathcal L_1, v\in\mathcal L_2$.
Hence $[w]_F=[w0^i]_F=[u]_F+[v]_F\in S_1+S_2$.

Part~(d) follows from part~(b) together with the fact that singleton languages are regular.

For part~(e) we fix an $F^r$-spanning set $\Sigma$ and show that $F^r$-invariant subgroups are in fact $(\Sigma,F^r)$-automatic.
To prove this formulation, we may as well assume that $r=1$.
We first reduce to the case when the $F$-invariant subgroup $N\leq \Gamma$ has the further property that $F^{-1}(N)=N$.
Indeed, by $F$-invariance
$$N\subseteq F^{-1}(N)\subseteq F^{-2}(N)\subseteq\cdots$$
and so as $\Gamma$ is finitely generated, for some $r\geq0$, we have $F^{-r-1}(N)=F^{-r}(N)$.
Suppose we knew that $F^{-r}(N)$, which is still $F$-invariant, were $F$-automatic.
It is not hard to see that if $S$ is $F$-automatic then so is $F(S)$ -- if $\mathcal L=\{w:[w]_F\in S\}$ is a regular language then $\{u:\exists w\in\mathcal L, u=0w\}$ is also regular and witnesses $F$-automaticity of $F(S)$ by part~(b).
So $F^r(F^{-r}(N))$ is $F$-automatic.
By injectivity and finite generation, $F^r(F^{-r}(N))$ is of finite index in $F^{-r}(N)$, and hence in $N$.
That is, $N$ is a finite union of translates of $F^r(F^{-r}(N))$.
By parts~(c) and~(d), translates of $F$-automatic sets are $F$-automatic.
Hence $N$ is $F$-automatic.

So we may assume that $F^{-1}(N)=N$.
Note that $N_0:=F(\Gamma)\cap N$ has finite index in $N$, and fix coset representatives $a_1,\dots,a_r$.
Fix an $F$-spanning set $\Sigma$.
Let $m>0$ be such that each $a_i$ has an $F$-expansion with respect to $\Sigma$ of length bounded by $m$.
Let $\mathcal T$ denote the set of all finite (possibly empty) unions of cosets of $N$ of the form $N+[w]_F$ where $w\in\Sigma^*$ is of length $m$.
We claim that the $(\Sigma,F)$-kernel of $N$ is contained in the finite collection $\mathcal T$, which will prove $F$-automaticity by Lemma~\ref{finker}.
Since $N\in \mathcal T$ it suffices to show that if $T\in \mathcal T$ and $x\in\Sigma$ then $T_x=\{a\in\Gamma:x+Fa\in T\}\in\mathcal T$.
Note that $T=\bigcup_{j=1}^\ell T_j$ implies $T_x=\bigcup_{j=1}^\ell (T_j)_x$, so we may assume that $T=N+[w]_F$ for some word $w$ of length $m$.
Then
\begin{eqnarray*}
T_x
&=&
\{a:x+Fa\in N+[w]_F\}\\
&=&
\bigcup_{i=1}^r \{a : x+Fa \in N_0 +a_i +[w]_F\}\\
&=&
\bigcup_{i=1}^r F^{-1}(N_0 +a_i +[w]_F-x).
\end{eqnarray*}
Now, since $N_0\subseteq F(\Gamma)$, if $a_i +[w]_F-x\not \in F(\Gamma)$ then $F^{-1}(N_0 +a_i +[w]_F-x)$ is empty.
So we need only consider those $i$ such that $a_i +[w]_F-x\in F(\Gamma)$.
By Lemma~\ref{inF}, in this case there is $v_i$ of length $m$ such that $a_i +[w]_F-x=F([v_i]_F)$.
But note that $F^{-1}(N_0 +F([v_i]_F))=N+[v_i]_F$.
Indeed, if $a= n+[v_i]_F$ with $n\in N$, then $F(a)=F(n)+F([v_i]_F)$ and $F(n)\in F(\Gamma)\cap N=N_0$ by $F$-invariance.
Conversely, if $F(a)=n_0+F([v_i]_F)$ with $n_0\in F(\Gamma)\cap N$, then $n_0=F(g)$ for some $g\in F^{-1}(N)=N$, and so by injectivity of $F$ we have $a=g+[v_i]_F$.
We have proved that $T_x$ is a finite union of sets of the form $N+[v_i]_F$ with $v_i$ of length $m$ -- i.e., $T_x\in\mathcal T$ as desired.
\end{proof}

The following generalises Theorem~\ref{fset-fauto} well beyond the case when $F$ is multiplication by an integer.

\begin{theorem}
\label{generalfsetfauto}
Suppose $\Gamma$ is a finitely generated abelian group equipped with an injective 
endomorphism $F$, such that for any $\delta>0$, $F^{\delta}-1$ is not a zero divisor in the subring of $\End(\Gamma)$ generated by $F$.
Suppose, moreover, that an $F^r$-spanning set for $\Gamma$ exists for some $r>0$.
Then every $F$-subset of $\Gamma$ is $F$-automatic.
\end{theorem}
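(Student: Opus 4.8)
The plan is to reduce, using the closure properties already in hand, to the single non-trivial case of $F$-cycles. By Definition~\ref{defnfsets} an $F$-subset is a finite union of finite set sums of singletons, $F$-invariant subgroups, and $F$-cycles. Fixing $r>0$ for which an $F^{r}$-spanning set exists, and using that $F$ is injective, Proposition~\ref{preservation}(a) and~(c) let us reduce to showing that each of these three kinds of set is $F$-automatic. Singletons are handled by Proposition~\ref{preservation}(d), and an $F$-invariant subgroup is a fortiori $F^{r}$-invariant, so Proposition~\ref{preservation}(e) applies. Thus the whole content is to prove that $F$-cycles are $F$-automatic.

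First I would reduce an arbitrary $F$-cycle to one whose step is divisible by $r$: this is exactly what Fact~\ref{rmult} does, and it is here that the hypothesis that $F^{\delta}-1$ is not a zero divisor gets used, writing $C(\gamma;\delta)$ as a finite union of translates of cycles $C(\gamma';r\delta)$. By Proposition~\ref{preservation}(a),(c),(d) it then suffices to show each $C(\gamma';r\delta)$ is $F$-automatic. Put $E:=F^{r}$ and fix an $E$-spanning set $\Sigma$; then $C(\gamma';r\delta)=\{\sum_{j=0}^{\ell}E^{j\delta}\gamma':\ell<\omega\}$ is the $\delta$-cycle relative to $E$, and I must show it is $(\Sigma,E)$-automatic. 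I would do this through the finite-kernel criterion of Lemma~\ref{finker}, which here reduces to two boundedness estimates about $\Sigma$-expansions.

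The first estimate is that the partial sums $P_{n}:=\sum_{j=0}^{n}E^{j\delta}\gamma'$ have $\Sigma$-expansions of length $n\delta+O(1)$. One sees this by building the expansion of $P_{n}$ in a single left-to-right pass: at each position only at most $\lceil\,\mathrm{length}(\gamma')/\delta\,\rceil$ of the shifted copies $E^{j\delta}\gamma'$ contribute a digit, so the digit-sum at each position has a $\Sigma$-expansion of bounded length (iterating Lemma~\ref{carry} and property~(iv)), and a short induction using Lemma~\ref{carry} then shows the running ``carry'' never grows beyond a fixed bound. The second estimate is that if $a\in E^{m}\Gamma$ has a $\Sigma$-expansion of length $\le m+O(1)$, then $E^{-m}a$ has a $\Sigma$-expansion of length $O(1)$: one divides $a$ by $E$ one step at a time, and property~(v) of spanning sets — applied at each step to the sum of the accumulated single-digit carry with the next digit, which lies in $E\Gamma$ — guarantees that the carry stays a single digit, so that after $m$ divisions one has simply truncated off $m$ leading positions.

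With these in hand the kernel computation is routine. For a word $w$ of length $m$ with $[w]_{E}=c$, splitting off the copies $E^{j\delta}\gamma'$ supported below position $m$ exhibits $P_{\ell}=P_{\ell_{1}}+E^{m+e}P_{\ell-\ell_{1}-1}$ for $\ell$ large, where $\ell_{1}=\lceil m/\delta\rceil-1$ and $e\in\{0,\dots,\delta-1\}$; hence the kernel set $\{x:c+E^{m}x\in C(\gamma';r\delta)\}$ equals a finite subset of a fixed finite set together with (when a divisibility condition on $c$ holds) a set of the form $z+E^{e}\,C(\gamma';r\delta)$, where by the two estimates $z$ and the members of the finite subset all lie in $\Sigma^{(N)}$ for a fixed $N$. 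Since $\Sigma^{(N)}$ is finite and $e$ ranges over a finite set, the $(\Sigma,E)$-kernel is finite, and Lemma~\ref{finker} gives $(\Sigma,E)$-automaticity, hence $F$-automaticity. The main obstacle is the second boundedness estimate — controlling $E^{-m}a$ — which is where property~(v) of spanning sets (and, implicitly, the design of Definition~\ref{spanning}) is essential; everything else is bookkeeping with Proposition~\ref{preservation} and Fact~\ref{rmult}.
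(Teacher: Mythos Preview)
Your reduction to $F$-cycles via Proposition~\ref{preservation} and Fact~\ref{rmult} matches the paper exactly. Where you diverge is in handling a single $E$-cycle $C(\gamma';\delta)$ (with $E=F^{r}$): you propose a direct finite-kernel verification via two boundedness estimates, while the paper takes a much shorter route. The paper enlarges the spanning set, replacing $\Sigma$ by $\Sigma^{(m)}$ (Lemma~\ref{expandspan}) where $m$ is the length of some $\Sigma$-expansion of $\gamma'$, so that $\gamma'$ itself becomes a single digit; then $C(\gamma';\delta)=\{[w]_{E}:w\in\mathcal{L}\}$ for the visibly regular language $\mathcal{L}=(\gamma'\,0^{\delta-1})^{*}\gamma'$, and Proposition~\ref{preservation}(b) finishes immediately.

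Your approach is correct in outline. The first estimate (that $P_{n}$ has a $\Sigma$-expansion of length $n\delta+O(1)$) follows already from Lemma~\ref{carry} by adding one term $E^{n\delta}\gamma'$ at a time, without needing the position-by-position carry analysis you sketch. The second estimate is exactly right: dividing by $E$ one step at a time, property~(v) applied to the single-digit carry plus the next digit (which lies in $E\Gamma$) keeps the carry a single digit throughout. Your kernel analysis then shows each $S_{w}$ is the union of a subset of the fixed finite set $\Sigma^{(N)}$ with, when the divisibility holds, a set $z+E^{e}S$ where $z\in\Sigma^{(N)}$ and $e\in\{0,\dots,\delta-1\}$; since there are only finitely many such unions, the kernel is finite. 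What this buys you is an argument for the cycle case that does not invoke Proposition~\ref{preservation}(b), and hence not the automaton constructions of Lemma~\ref{sim}; but since you already rely on parts~(d) and~(e), which themselves rest on~(b), there is no genuine economy. The paper's trick of enlarging $\Sigma$ so as to absorb $\gamma'$ as a digit is the cleaner move and sidesteps the bespoke carry estimates entirely.
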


\begin{proof}
By parts~(a), (c), (d), (e) of Proposition~\ref{preservation}, it only remains to show that every $F$-cycle is $F$-automatic.
Fix $r>0$ so that an $F^r$-spanning set exists.
By Fact~\ref{rmult}, which is where the assumption on $F^\delta-1$ is used, every $F$-cycle is a finite union of translates of $F^r$-cycles.
So it suffices to show that for some $F^r$-spanning set, $\Sigma$, all $F^r$-cycles are $(\Sigma,F^r)$-automatic.
To prove this latter statement, it suffices to consider the case when $r=1$.
Fix an $F$-spanning set $\Sigma$ and consider an $F$-cycle
$$S=C(\gamma,\delta)=\{\gamma+F^\delta\gamma+\cdots+F^{\ell\delta}\gamma:\ell<\omega\}$$
where $\gamma\in \Gamma$ and $\delta>0$.
Write $\gamma=[w]_F$ for some $w\in\Sigma^*$ of length $m>0$.
By Lemma~\ref{expandspan}, $\Sigma^{(m)}$ is another $F$-spanning set that now contains $\gamma$.
Replacing $\Sigma$ by $\Sigma^{(m)}$, and setting $\mathcal L$ to be the regular language $(\gamma\underbrace{0\cdots0}_{\delta-1\text{ times}})^*\gamma$, we see that $S=\{[w]_F:w\in\mathcal L\}$.
Hence $S$ is $(\Sigma,F)$-automatic by Proposition~\ref{preservation}(b).
\end{proof}

See Proposition~\ref{height} for a sufficient condition for when an $F^r$-spanning set exists for some $r>0$.
In particular, this theorem does apply to the diophantine-geometric context we are interested in.

\begin{corollary}
\label{corfsetfauto}
Suppose $G$ is a connected commutative algebraic group over a finite field~$\mathbb F_q$ and $F:G\to G$ is the endomorphism induced by the $q$-power Frobenius.
Suppose $\Gamma$ is a finitely generated subgroup of $G$ that is preserved by~$F$.
Then every $F$-subset of $\Gamma$ is $F$-automatic.

In particular, if $G$ is semiabelian and  $X\subseteq G$ is a  closed subvariety then $X\cap \Gamma$ is $F$-automatic.
\end{corollary}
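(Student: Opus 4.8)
The plan is to follow the proof of Theorem~\ref{generalfsetfauto}, with one adjustment: the hypothesis of that theorem that $F^{\delta}-1$ be a non-zero-divisor in the subring of $\End(\Gamma)$ generated by $F$ need not hold in this geometric setting (it can fail, for instance, when $\Gamma$ has torsion on which the Frobenius acts as the identity), so the appeal to Fact~\ref{rmult} has to be replaced. First note that, since the $q$-power Frobenius is the $q$-power map on coordinates, it is injective on $G(K)$; thus $F$ is an injective endomorphism of $\Gamma$. By Corollary~\ref{mlfsetfauto} there is $r_{0}>0$ with an $F^{r_{0}}$-spanning set for $\Gamma$, so ``$F$-automatic'' makes sense. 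By parts~(a), (c), (d), (e) of Proposition~\ref{preservation} — using that an $F$-invariant subgroup is a fortiori $F^{r_{0}}$-invariant — singletons, $F$-invariant subgroups, and finite unions and set sums of $F$-automatic sets are $F$-automatic. So it remains to show that an arbitrary $F$-cycle $C(\gamma;\delta)\subseteq\Gamma$ is $F$-automatic.

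Here is where the geometry enters. The proof of Fact~\ref{rmult} uses the zero-divisor hypothesis only to pass to a finitely generated $F$-invariant extension $\Gamma'\ge\Gamma$ containing an element $\beta$ with $F^{\delta}\beta-\beta=\gamma$; the remainder of that argument is formal and uses only the $F$-invariance of $\Gamma$. Such a $\beta$ exists here for a different reason: since $G$ is connected and $F^{\delta}-1$ has differential $-\id$, it is a separable isogeny of $G$, in particular surjective on $K^{\alg}$-points (Lang's theorem); pick $\beta\in G(K^{\alg})$ with $(F^{\delta}-1)\beta=\gamma$, and then $\Gamma':=\langle\Gamma,\beta,F\beta,\dots,F^{\delta-1}\beta\rangle$ is, using $F^{\delta}\beta=\beta+\gamma$, a finitely generated $F$-invariant subgroup of $G$ containing $\Gamma$. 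Running the argument of Fact~\ref{rmult} with this $\beta$ and with $r:=r_{0}$ writes $C(\gamma;\delta)$ as a finite union of finitely many singletons together with translates, by elements of $\Gamma$, of $F^{r_{0}}$-cycles $C(\gamma_{i};r_{0}\delta)$ with $\gamma_{i}\in\Gamma$. Finally, just as at the end of the proof of Theorem~\ref{generalfsetfauto}: after enlarging a fixed $F^{r_{0}}$-spanning set by Lemma~\ref{expandspan} to contain each $\gamma_{i}$, one has $C(\gamma_{i};r_{0}\delta)=\{[w]_{F^{r_{0}}}:w\in\mathcal L_{i}\}$ for the regular language $\mathcal L_{i}=(\gamma_{i}\,0^{\delta-1})^{*}\gamma_{i}$, so $C(\gamma_{i};r_{0}\delta)$ is $F$-automatic by Proposition~\ref{preservation}(b), and translates by elements of $\Gamma$ together with the extra singletons are $F$-automatic by parts~(c) and~(d). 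This gives the first assertion.

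For the ``in particular'' statement, a semiabelian variety is connected, so the first assertion applies to it. As explained in Section~\ref{iml}, to satisfy the regularity hypothesis of Theorem~\ref{ml} one replaces $\mathbb F_{q}$ by $K\cap\mathbb F_{q}^{\alg}=\mathbb F_{q^{s}}$, and $F$ accordingly by $F^{s}$, so that $K/\mathbb F_{q^{s}}$ is regular while $\Gamma$ is still preserved by $F^{s}$; Theorem~\ref{ml} then gives that $X\cap\Gamma$ is an $F^{s}$-subset of $\Gamma$. Applying the first assertion with $(G,\mathbb F_{q^{s}},F^{s})$ in place of $(G,\mathbb F_{q},F)$ shows that $X\cap\Gamma$ is $F^{s}$-automatic; and $F^{s}$-automaticity and $F$-automaticity coincide, by Definition~\ref{defnauto} and Proposition~\ref{welld}.

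The only genuine obstacle is the $F$-cycle step: one has to recognise that Fact~\ref{rmult} cannot be quoted verbatim here, and that the element $\beta$ it needs is produced instead by Lang's theorem for the connected group $G$. Everything else is bookkeeping with Propositions~\ref{welld} and~\ref{preservation}, Corollary~\ref{mlfsetfauto}, Theorem~\ref{ml}, and the reductions already in place.
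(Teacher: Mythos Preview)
Your proof is correct, and in one respect more careful than the paper's own. The paper proceeds more briskly: it verifies that $F^\delta-1$ is not a zero divisor in $\End(G)$ for any $\delta>0$ (arguing that if $F^\delta\phi=\phi$ for $\phi\in\End(G)$ then $\phi(G(L))\subseteq\bigcap_n G(L^{q^{\delta n}})$ is finite, whence $\phi=0$ by connectedness), invokes Corollary~\ref{mlfsetfauto} for an $F^r$-spanning set, and then applies Theorem~\ref{generalfsetfauto} directly; the ``in particular'' clause is deduced by a bare citation of Theorem~\ref{ml}. You are right, however, that the hypothesis of Theorem~\ref{generalfsetfauto} (and of Fact~\ref{rmult}) is the non-zero-divisor condition in the subring of $\End(\Gamma)$ generated by $F$, not in $\End(G)$, and that the former can genuinely fail---your example of $\Gamma$ containing torsion fixed by a Frobenius power is exactly the obstruction. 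The paper glosses over this distinction. Your remedy, producing $\beta$ inside $G$ via the surjectivity of the separable isogeny $F^\delta-1$ (Lang's theorem) and then running the formal part of Fact~\ref{rmult} by hand inside the finitely generated $F$-invariant $\Gamma'\le G$, is the right fix and is in any case the geometric content underlying the paper's $\End(G)$ computation. So the two routes differ mainly in how explicitly this step is handled: the paper's is shorter but leaves the passage from $\End(G)$ to $\End(\Gamma)$ tacit, while yours closes that gap cleanly.
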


\begin{proof}
We first verify that $F^{\delta}-1$ is not a zero divisor in $\End(G)$ for any $\delta>0$.
Indeed, suppose $F^\delta \phi=\phi$ where $\phi$ is an algebraic endomorphism of $G$.
Then, letting $L$ be a finitely generated extension of $\mathbb F_q$ over which $\phi$ is defined and such that $G(L)$ is Zariski dense in $G$, we see that $\phi\big(G(L)\big)\subseteq\bigcap_{n<\omega}G(L^{q^{\delta n}})$, and the latter set is finite.
So in fact $\phi$ maps all of $G$ to a finite set, and $\phi=0$ by connectedness.

By Corollary~\ref{mlfsetfauto}, $\Gamma$ has an $F^r$-spanning set for some $r>0$.

So $(\Gamma,F)$ satisfies the conditions of Theorem~\ref{generalfsetfauto}, and all $F$-sets are $F$-automatic.

The ``in particular" clause follows now from the Isotrivial Mordell-Lang theorem of~\cite{fsets}, see Theorem~\ref{ml} above.
\end{proof}

\bigskip
\section{Generalised $F$-normality}
\label{section-fnormal}
\noindent
In this final section we introduce a natural notion of $F$-normality capturing an aspect of Derksen's notion when $F$ is an integer.
We will show that $F$-sets are $F$-normal (thus generalising one direction of Proposition~\ref{fset=pnorm}).

We first recall the notion of a sparse language.
Let $\Sigma$ be an alphabet and let $\mathcal{L}\subseteq \Sigma^*$ be a language.  Define 
$$f_{\mathcal{L}}(n):=\#\{w\in \mathcal{L}\colon {\rm length}(w)\le n\}.$$
The language $\mathcal L$ is called {\em sparse} if it is regular and any of the equivalent conditions of Proposition~\ref{sparse} hold.  Sparse regular languages have been studied extensively; we give a collection of equivalent formulations of sparseness, by combining results from
\cite{Ginsburg&Spanier:1966,Trofimov:1981,Ibarra&Ravikumar:1986,Szilard&Yu&Zhang&Shallit:1992,Gawrychowski&Krieger&Rampersad&Shallit:2010}.

\begin{proposition}
\label{sparse}
Let $\mathcal{L}$ be a regular language. The following are equivalent:
\begin{enumerate}
\item  $f_\mathcal{L}(n)={\rm O}(n^d)$ for some natural number $d$.   
\item $f_{\mathcal{L}}(n)={\rm o}(C^n)$ for every $C>1$.
\item There do not exist words $u,v,a,b$ with $a,b$ non-trivial and of the same length and $a\neq b$ such that $u\{a,b\}^* v\subseteq \mathcal{L}$.
\item Suppose $\Gamma = (Q,\Sigma,\delta,q_0,F)$ is an automaton accepting $\mathcal{L}$ in which all states are accessible.
Then $\Gamma$ satisfies the following.
\begin{itemize}
\item[($*$)] If $q$ is a state such that $\delta(q,v)\in F$ for some word $v$ then there is at most one non-trivial word $w$ with the property that $\delta(q,w)=q$ and $\delta(q,w')\neq q$ for every non-trivial proper prefix $w'$ of $w$;
\end{itemize}
\item There exists  an automaton accepting $\mathcal{L}$ that satisfies~{\em($*$)}.
\item $\mathcal{L}$ is a finite union of languages of the form $v_1 w_1^* v_2 w_2^* \cdots v_k w_k^* v_{k+1}$ where $k\ge 0$ and the $v_i$ are possibly trivial words and the $w_i$ are non-trivial words.
\end{enumerate}
\end{proposition}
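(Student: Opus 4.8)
The plan is to prove all six conditions equivalent by establishing the single cycle of implications
$$(6)\Rightarrow(1)\Rightarrow(2)\Rightarrow(3)\Rightarrow(4)\Rightarrow(5)\Rightarrow(6),$$
which visits every item. Most of these steps will be routine. For $(6)\Rightarrow(1)$ I would observe that a word of $v_1w_1^*v_2\cdots w_k^*v_{k+1}$ is determined by an exponent vector $(e_1,\dots,e_k)$, and having length at most $n$ forces $e_1+\cdots+e_k\le n$ since the $w_i$ are non-trivial; so there are $O(n^k)$ such words, and a finite union gives $f_{\mathcal L}(n)=O(n^d)$ with $d$ the largest $k$ appearing. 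Then $(1)\Rightarrow(2)$ is immediate as $n^d=o(C^n)$ for every $C>1$. For $(2)\Rightarrow(3)$ I would argue by contraposition: if $u\{a,b\}^*v\subseteq\mathcal L$ with $a\neq b$ non-trivial and of common length $k$, then the $2^m$ words $usv$ for $s\in\{a,b\}^m$ are pairwise distinct and of length $|u|+mk+|v|$, so $f_{\mathcal L}(|u|+mk+|v|)\ge 2^m$, and hence $f_{\mathcal L}(n)/C^n\not\to 0$ for $C=2^{1/k}$. Finally $(4)\Rightarrow(5)$ is trivial, since an automaton with all states accessible and accepting $\mathcal L$ certainly exists (e.g.\ the minimal DFA), and by $(4)$ it satisfies~$(*)$.

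For $(3)\Rightarrow(4)$ I would again argue contrapositively. Suppose some automaton $\mathcal A$ with all states accessible, accepting $\mathcal L$, violates $(*)$: there is a state $q$ reaching a final state and admitting two distinct non-trivial first-return words $w_1\neq w_2$ --- meaning $\delta(q,w_i)=q$ while no non-trivial proper prefix of $w_i$ returns to $q$. Put $a:=w_1^{\,|w_2|}$ and $b:=w_2^{\,|w_1|}$; these are non-trivial, of the same length, and fix $q$. The point is that $a\neq b$: if instead $w_1^{|w_2|}=w_2^{|w_1|}$ then by the classical theorem of Lyndon and Sch\"utzenberger there is a non-trivial $z$ with $w_1=z^i$ and $w_2=z^j$, and then (taking $i<j$, say) $z^i=w_1$ is a non-trivial proper prefix of $w_2$ returning to $q$, while $i=j$ would force $w_1=w_2$ --- a contradiction in either case. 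Choosing $u$ with $\delta(q_0,u)=q$ (accessibility) and $v$ with $\delta(q,v)$ final, we then get $u\{a,b\}^*v\subseteq\mathcal L$, so $(3)$ fails.

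The substantive step will be $(5)\Rightarrow(6)$. Given an automaton $\mathcal A$ satisfying $(*)$, I would first delete its inaccessible states and then its non-useful states (those reaching no final state); neither operation changes the accepted language or the validity of $(*)$, and both leave every accepting path intact, so we may assume every state lies on an accepting path and in particular is useful. The key structural claim is that each strongly connected component of $\mathcal A$ is either a single state (possibly carrying a self-loop) or a single directed cycle: a component with a vertex $q$ of out-degree $\ge 2$ within the component yields, via shortest return paths through two distinct out-neighbours of $q$, two distinct simple cycles at the useful vertex $q$, contradicting $(*)$; and a strongly connected digraph all of whose vertices have out-degree $1$ is a single cycle. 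Consequently the condensation of $\mathcal A$ is acyclic, so it has only finitely many directed paths. For each such path, through components $S_0\ni q_0,\dots,S_t$ with $S_t$ meeting the final states, and for each choice of the connecting edges, of the entry and exit states within the $S_i$, and of the final state reached, the words read along the realising automaton-paths form a set of the shape $p_0c_0^*e_0p_1c_1^*\cdots e_{t-1}p_tc_t^*$: inside a cycle-component the words from the entry state to the exit state are exactly $(\text{direct arc})\cdot(\text{full loop})^*$ --- using that in a single cycle inserting loops before or after a fixed arc produces the same word --- while a single-state component contributes $(\text{self-loop})^*$ or nothing. Taking the finite union over all of these choices then exhibits $\mathcal L$ in the form required by $(6)$.

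The main obstacle is this last implication --- the component-structure lemma and the bookkeeping that collapses a condensation path together with all intermediate choices into a single expression $v_1w_1^*\cdots v_kw_k^*v_{k+1}$. The only other delicate point is the appeal to the Lyndon--Sch\"utzenberger theorem in $(3)\Rightarrow(4)$; everything else is a direct computation.
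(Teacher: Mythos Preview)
Your proposal is correct and follows the same cycle of implications $(6)\Rightarrow(1)\Rightarrow(2)\Rightarrow(3)\Rightarrow(4)\Rightarrow(5)\Rightarrow(6)$ as the paper, with the same ideas at each step.

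There are two minor differences worth noting. First, in $(3)\Rightarrow(4)$ you invoke Lyndon--Sch\"utzenberger to justify that $a=w_1^{|w_2|}$ and $b=w_2^{|w_1|}$ are distinct; the paper simply asserts this without comment, so your argument is more complete here. (Indeed, without the first-return hypothesis on $w_1,w_2$ one could have $a=b$, e.g.\ $w_1=xy$, $w_2=xyxy$.) Second, in $(5)\Rightarrow(6)$ the paper proceeds by induction on the number of useful states: it picks one strongly connected component $[q]$, shows it is a single cycle, splits $\mathcal L=\mathcal L'\cup\mathcal L''$ according to whether $[q]$ is visited, and handles $\mathcal L'$ and the entry/exit pieces of $\mathcal L''$ by the inductive hypothesis. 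Your argument is the ``unrolled'' version of this induction: you prove the same structural lemma (every SCC is a singleton or a simple cycle), pass to the condensation DAG all at once, and read off the finite union from its finitely many paths. Both approaches rest on exactly the same structural fact and yield the same decomposition; yours is slightly more direct, while the paper's inductive version avoids having to spell out the bookkeeping of entry/exit points across the whole DAG at once.
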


\begin{proof}  We observe that (1) $\implies$ (2) follows immediately from the definition.

If~(3) does not hold then we have non-trivial words $a$ and $b$ of the same length such that $u\{a,b\}^* v\subseteq \mathcal{L}$.  Let $D=\max(|a|,|b|,|u|,|v|)$.
Then we see that we have at least $2^m$ distinct words of length $\leq(m+2)D$.
Thus $f_{\mathcal{L}}((m+2)D) \ge 2^m$.
Hence $f(n)\ge \frac{1}{4}(2^{\frac{1}{D}})^n$ for infinitely many $n$.
So (2) does not hold; thus (2) implies (3).

If (4) does not hold, then there is some minimal automaton $\Gamma= (Q,\Sigma,\delta,q_0,F)$ accepting $\mathcal{L}$ such that for some state $q$ such that $\delta(q,v)$ is an accepting state and we have two words distinct words $w,w'$ such that $\delta(q,w)=\delta(q,w')=q$ and no proper non-trivial prefix of $w$ or $w'$ sends $q$ to itself.  Since our automaton is minimal there is some word $u$ such that $\delta(q_0,u)=q$, where $q_0$ is our initial state. Then we see that $u\{w,w'\}^* v \subseteq \mathcal{L}$.  We let $a=w^{{\rm length}(w')}$ and $b=(w')^{{\rm length}(w)}$.  Then $a,b$ are distinct words of the same length and we have $u\{a,b\}^* v\subseteq \mathcal{L}$.
So (3) implies (4).

That (4) implies (5) is clear.

We can show that (5) implies (6) by induction on the number of states that admit a path to an accepting state in an automaton accepting $\mathcal L$ and satisfying~($*$).
Note that if $\mathcal L$ is accepted by an automaton with only one state then it cannot satisfy~($*$), unless $|\Sigma|=1$ or $\mathcal L=\emptyset$ in both of which cases the proposition is vacuously true.
Now suppose that (5) holds and $\Gamma = (Q,\Sigma,\delta,q_0,F)$ is an automaton accepting $\mathcal{L}$ and satisfying~($*$).
Let us introduce some notation.
Given two states $q,q'$, we'll say that $q\prec q'$ if there is some $u\in \Sigma^*$ such that $\delta(q,u)=q'$.
If $q\prec q'$ and $q'\prec q$, we'll write $q\sim q'$.
We note that $\sim$ is an equivalence relation and $\prec$ induces a partial order on the equivalence classes and we'll let $[q]$ denote the equivalence class of $q$.

Now fix a state $q$ such that $q\prec q'$ for some $q'\in F$.
Let $\{q=q_1,\ldots ,q_d\}=[q]$.
Note that every state visited on a path from $q$ to any $q_i$ is in $[q]$.
Also, by~($*$), there is a unique path that goes from $q$ to $q_d$ and back (without going through $q$ along the way), and that path must pass through all the other $q_i$.
So, after possibly relabelling, there are unique $s_1,\ldots ,s_d\in \Sigma$ such that $\delta(q_i,s_i)=q_{i+1}$, where we take $q_{d+1}=q_1$.
Now our language $\mathcal{L}$ can be written as $\mathcal{L}'\cup \mathcal{L}''$ where $\mathcal{L}'$ is the collection of words accepted by our automaton without ever visiting a state in $[q]$ and $\mathcal{L}''$ is the collection of accepted words upon which input the automaton passes through at least one state in $[q]$.

The language $\mathcal{L}'$ is accepted by the automaton obtained from $\Gamma$ by excising $[q]$ from $\Gamma$ and replacing it with a terminal rejecting state.
Note that this automaton has fewer states that lead to an accepting state than $\Gamma$ did.
As the new smaller automaton still satisfies~($*$), our induction hypothesis implies that $\mathcal{L}'$ is of the desired form.

Now consider $\mathcal{L}''$.
For each $i$ let $\{(r_{ik},a_{ik}):k=1,\dots,n_i\}$ be the set of all pairs where $r_{ik}$ is a state not in $[q]$ and $a_{ik}\in \Sigma$ is such that $\delta(r_{ik},a_{ik})=q_i$.
Let $A_{ik}$ be the set of all words that lead from $q_0$ to $r_{ik}$ without visiting any state in $[q]$.
Similarly, define $\{(s_{ik},b_{ik}):k=1,\dots,m_i\}$ be the set of all pairs where $s_{ik}$ is a state not in $[q]$ and $b_{ik}\in \Sigma$ is such that $\delta(q_i,b_{ik})=s_{ik}$.
Let $B_{ik}$ be the set of all words that lead from $s_{ik}$ to an accepting state without visiting any state in $[q]$.
Finally, for each $(i,j)$ let $u_{ij}$ be the unique minimal (possibly trivial) path from $q_i$ to $q_j$.
We have,
$$\mathcal{L}''=\bigcup_{i,j=1}^d \bigcup_{k=1}^{n_i}\bigcup_{\ell=1}^{m_j}A_{ik}a_{ik}(s_i\cdots s_d s_1\cdots s_{i-1})^* u_{ij} b_{j\ell}B_{j\ell}.$$ 
It therefore suffices to show that each $A_{ik}$ and $B_{j\ell}$ are of the desired form.
But $A_{ik}$ is accepted by the automaton obtained from $\Gamma$ by excising $[q]$, replacing it with a terminal rejecting state $z$, and making $r_{ik}$ the only accepting state and from which all  letters lead to $z$.
This new automaton still satisfies~($*$) and has fewer states that lead to an accepting state than $\Gamma$, since in $\Gamma$ if $r\prec r_{ik}$ then $r\prec q'\in F$.
So $A_{ik}$ is of the desired form by the induction hypothesis.
For $B_{j\ell}$, we note that it is accepted by the automaton obtained from $\Gamma$ by excising $[q]$, replacing it with a terminal rejecting state $z$, and making $s_{j\ell}$ the initial state.
Again this automaton has strictly fewer states that lead to accepting states, and it satisfies~($*$), so by the induction hypothesis $B_{j\ell}$ is also of the desired form.
Thus we obtain (6).

Finally, to see that (6) implies (1), first observe that a finite union of languages satisfying~(1) will again satisfy~(1).
Next, note that~(1) is preserved by concatenation as well since $f_{\mathcal{L}\mathcal{L'}}(n) \le  f_{\mathcal{L}}(n) f_{\mathcal{L'}}(n)$.
We thus reduce to proving that a singleton $\{v\}$ satisfies~(1) and a language of the form $w^*$ satisfies~(1).
Both of these claims are immediate.
\end{proof}

\begin{definition}[$F$-normal]
Suppose $\Gamma$ is a finitely generated abelian group and $F$ is an endomorphism of $\Gamma$.
A subset $S\subseteq \Gamma$ is {\em $F$-sparse} if $S=\{[w]_{F^r}:w\in\mathcal L\}$ for some $r>0$, some $F^r$-spanning set $\Sigma$, and some sparse language $\mathcal L\subset\Sigma^*$.
An {\em $F$-normal} subset is one that up to a finite symmetric difference is a finite union of sets of the form $\gamma+T+H$ where $\gamma\in \Gamma$, $T$ is $F$-sparse, and $H$ is an $F$-invariant subgroup.
\end{definition}

Note that by Proposition~\ref{preservation}, if $F$ is injective then $F$-normal sets are $F$-automatic.

\begin{remark}
It seems likely that $F$-sparseness is preserved under translation, in which case the definition of $F$-normality could be simplified.
However, this property is not entirely obvious, and not necessary for our purposes here.
We delay verifying it until some future work in which it may be desired.
\end{remark}

\begin{theorem}
\label{fset=fnorm}
Suppose $\Gamma$ is a finitely generated abelian group equipped with an injective 
endomorphism $F$, such that $F^{\delta}-1$ is not a zero divisor in the subring of $\End(\Gamma)$ generated by $F$ for any $\delta>0$, and $\Gamma$ has an $F^r$-spanning set for some $r>0$.
Then the $F$-subsets of $\Gamma$ are $F$-normal.\footnote{We are grateful to Christopher Hawthorne who found a mistake in the proof of an earlier version claiming also that every $F$-normal set is, up to a finite symmetric difference, an $F$-set.}
\end{theorem}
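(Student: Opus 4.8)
The plan is to reduce, via Fact~\ref{rmult}, to showing that a single finite set sum of $F^r$-cycles (all of one common period divisible by $r$) is $F$-sparse, and then to exhibit a sparse regular language for such a sum by hand, rather than via the generic carrying machinery.

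First I would unwind the definitions: an $F$-subset is a finite union of sets of the form $\gamma+C(\gamma_1;\delta_1)+\cdots+C(\gamma_d;\delta_d)+H$ with $\gamma\in\Gamma$, each $C(\gamma_i;\delta_i)$ an $F$-cycle, and $H$ an $F$-invariant subgroup, since a finite set sum of singletons is a singleton and a finite set sum of $F$-invariant subgroups is again one. As a finite union of $F$-normal sets is $F$-normal by definition, it suffices to treat one such set. Fix $r>0$ for which an $F^r$-spanning set exists and let $D$ be a common multiple of $r,\delta_1,\dots,\delta_d$. By Fact~\ref{rmult} — this is where the zero-divisor hypothesis enters — each $C(\gamma_i;\delta_i)$ is a finite union of translates of $F$-cycles of the form $C(\gamma_i';D)$; distributing the set sum over these finite unions reduces us to showing that $\gamma'+C(\gamma_1';D)+\cdots+C(\gamma_d';D)+H$ is $F$-normal. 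Writing $e:=D/r$ and $G:=F^r$ we have $C(\gamma_i';D)=\{\gamma_i'+G^e\gamma_i'+\cdots+G^{\ell e}\gamma_i':\ell<\omega\}$, so by the definition of $F$-normality it remains only to prove that $T:=C(\gamma_1';D)+\cdots+C(\gamma_d';D)$ is $F$-sparse, i.e. $T=\{[w]_{F^r}:w\in\mathcal L\}$ for some $F^r$-spanning set and some sparse regular $\mathcal L$. (When $d=0$ take $\mathcal L=\{\emptyset\}$.)

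The heart of the matter is this last claim. Realising $T$ through the carrying constructions behind Proposition~\ref{preservation}(b,c) yields a regular language, but generally not a sparse one, because elements of $\Gamma$ can have very many $F^r$-expansions. Instead I would pick expansions explicitly. Using Lemma~\ref{expandspan} (with $F^r$ in place of $F$), replace the given $F^r$-spanning set by a larger one, $\Sigma$, containing $0$ and every partial sum $\sum_{i\in I}\gamma_i'$ for $I\subseteq\{1,\dots,d\}$. For a permutation $\pi$ of $\{1,\dots,d\}$ and $1\le k\le d$ set $\sigma^\pi_k:=\gamma'_{\pi(k)}+\cdots+\gamma'_{\pi(d)}\in\Sigma$, and put
$$\mathcal L_\pi:=(\sigma^\pi_1 0^{e-1})^*\,\sigma^\pi_1\,(0^{e-1}\sigma^\pi_2)^*\cdots(0^{e-1}\sigma^\pi_d)^*,\qquad \mathcal L:=\bigcup_\pi\mathcal L_\pi.$$
Each $\mathcal L_\pi$ has the shape in Proposition~\ref{sparse}(6) and so is sparse, hence $\mathcal L$ is sparse. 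That $T=\{[w]_{F^r}:w\in\mathcal L\}$ would follow from the identity $\sum_{i=1}^d\sum_{j=0}^{\ell_i}G^{je}\gamma_i'=\sum_{j\ge 0}G^{je}\big(\sum_{i:\,\ell_i\ge j}\gamma_i'\big)$ together with the observation that when $\ell_{\pi(1)}\le\cdots\le\ell_{\pi(d)}$ the coefficient set $\{i:\ell_i\ge j\}$ equals $\{\pi(k),\dots,\pi(d)\}$ for the unique $k$ with $\ell_{\pi(k-1)}<j\le\ell_{\pi(k)}$ (with $\ell_{\pi(0)}:=-1$): the word in $\mathcal L_\pi$ with block exponents $a_1=\ell_{\pi(1)}$ and $a_k=\ell_{\pi(k)}-\ell_{\pi(k-1)}$ then has $F^r$-value exactly $\sum_i\sum_{j=0}^{\ell_i}G^{je}\gamma_i'$, and conversely every element of $T$ arises this way after sorting the $\ell_i$.

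I expect the main obstacle to be precisely this construction of a \emph{sparse} language for $T$: one must avoid the generic carrying machinery (which produces non-sparse languages) and instead commit to a canonical family of expansions, and the block-length bookkeeping — handling ties among the $\ell_i$, the single unstarred $\sigma^\pi_1$, and the fact that there are no digits past position $e\max_i\ell_i$ — is fiddly, though no individual step is deep. The reductions in the preceding paragraphs, by contrast, are routine given Fact~\ref{rmult} and the definition of $F$-normality.
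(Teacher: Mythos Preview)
Your proposal is correct and follows essentially the same route as the paper: reduce via Fact~\ref{rmult} to a sum of cycles of common period, split $T$ according to the ordering of the exponents $(\ell_1,\dots,\ell_d)$ by a permutation, and observe that on each piece the base-$F^r$ digits are the partial sums $\gamma'_{\pi(k)}+\cdots+\gamma'_{\pi(d)}$, yielding a language of the shape in Proposition~\ref{sparse}(6). The only cosmetic difference is that the paper invokes Lemma~\ref{expandspanmore} to pass to an $F^\delta$-spanning set (so $e=1$ and the $0^{e-1}$ padding disappears), giving the slightly cleaner language $u_d u_d^* u_{d-1}^*\cdots u_1^*$, whereas you stay in base $F^r$ and insert the zero blocks by hand; both work.
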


\begin{proof}
Every $F$-set is a finite union of sets of the form
$\gamma_0+C(\gamma_1;\delta_1)+\cdots+C(\gamma_d;\delta_d)+H$
where $\gamma_0,\dots,\gamma_d\in \Gamma$, $H$ is an $F$-invariant subgroup of $\Gamma$, and $\delta_1,\dots,\delta_d$ are positive integers.
Using Fact~\ref{rmult} and Lemma~\ref{expandspanmore}, we may take $\delta_1=\dots=\delta_d=:\delta$ such that an $F^\delta$-spanning set exists.
Comparing definitions, it suffices therefore to show that
$$T=C(\gamma_1;\delta)+\cdots+C(\gamma_d;\delta)$$
is $F$-sparse.
For each permutation $\sigma\in S_d$, let
$I_\sigma:=\{(n_1,\dots,n_d)\in\mathbb N^d: n_{\sigma(1)}\leq n_{\sigma(2)}\leq \cdots \leq n_{\sigma(d)}\}$
and let $T^\sigma$ be the set
$$\{(\gamma_1+F^\delta \gamma_1+\cdots + F^{n_1\delta}\gamma_1) +\cdots + (\gamma_\ell+F^\delta \gamma_\ell+\cdots + F^{n_d\delta}\gamma_d) \ : (n_1,\ldots ,n_d)\in I_\sigma\}.$$
Then $\displaystyle T=\bigcup_{\sigma\in S_d}T^\sigma$, and so as $F$-sparse sets are closed under finite unions it suffices to show that each $T^\sigma$ is $F$-sparse.
By symmetry it suffices to consider the case when $\sigma=\id$.
Let $u_i:=\gamma_1+\cdots + \gamma_i$ for $1\le i\le d$, and use Lemma~\ref{expandspan} to get an $F^\delta$-spanning set that contains $u_1,\dots,u_d$.
Now, observe that $T^{\id} = \{[w]_{F^\delta} \colon w\in \mathcal{L}\}$ 
where $\mathcal{L}=u_d u_d^*u_{d-1}^*\cdots u_1^*$.
This is a sparse language by Proposition~\ref{sparse}(6).
\end{proof}

\begin{corollary}
\label{corfset=fnorm}
Suppose $G$ is a connected commutative algebraic group over a finite field~$\mathbb F_q$ and $F:G\to G$ is the endomorphism induced by the $q$-power Frobenius.
Suppose $\Gamma$ is a finitely generated subgroup of $G$ that is preserved by~$F$.
Then the $F$-subsets of $\Gamma$ are $F$-normal.
In particular, if $G$ is semiabelian and  $X\subseteq G$ is a  closed subvariety then $X\cap \Gamma$ is $F$-normal.
\end{corollary}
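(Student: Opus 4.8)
The plan is to mimic the proof of Corollary~\ref{corfsetfauto}, with Theorem~\ref{fset=fnorm} playing the role of Theorem~\ref{generalfsetfauto}. Thus the first part of the corollary -- that every $F$-subset of $\Gamma$ is $F$-normal -- reduces to checking that $(\Gamma,F)$ satisfies the hypotheses of Theorem~\ref{fset=fnorm}: that $F$ is injective, that $F^{\delta}-1$ is not a zero divisor in the subring $R$ of $\End(\Gamma)$ generated by $F$ for every $\delta>0$, and that $\Gamma$ admits an $F^r$-spanning set for some $r>0$. Injectivity is immediate, the relative Frobenius being bijective on points, and the existence of an $F^r$-spanning set is exactly Corollary~\ref{mlfsetfauto}.

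For the zero-divisor condition I would argue as in Corollary~\ref{corfsetfauto}: if $\phi$ is an algebraic endomorphism of $G$ with $(F^{\delta}-1)\phi=0$, then, picking a finitely generated $L\supseteq\mathbb F_q$ over which $\phi$ is defined and with $G(L)$ Zariski dense, one gets $\phi\big(G(L)\big)\subseteq\bigcap_{n<\omega}G(L^{q^{\delta n}})$, a finite set, so $\phi$ has finite image and hence $\phi=0$ by connectedness of $G$. Since $\Gamma$ is $F$-invariant, restriction to $\Gamma$ carries the subring of $\End(G)$ generated by $F$ onto $R$, exhibiting $R$ as a quotient of that ring; and after tensoring with $\mathbb Q$ the element $F^{\delta}-1$ is in fact a unit there -- its eigenvalues are Weil numbers, never roots of unity -- so it remains a non-zero-divisor in $R$. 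With these three points in place, Theorem~\ref{fset=fnorm} gives that every $F$-subset of $\Gamma$ is $F$-normal.

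For the ``in particular'' clause I would invoke the isotrivial Mordell--Lang theorem, Theorem~\ref{ml}. Fixing a finitely generated $K$ with $\Gamma\le G(K)$ and $X$ defined over $K$, and replacing $\mathbb F_q$ by $\mathbb F_{q^r}=K\cap\mathbb F_q^{\alg}$ so that $K/\mathbb F_{q^r}$ becomes regular and the relevant Frobenius is $F^r$, Theorem~\ref{ml} tells us that $X\cap\Gamma$ is an $F^r$-subset of $\Gamma$. Applying the already-established first part of the corollary to $G$ viewed over $\mathbb F_{q^r}$, with $F^r$ in place of $F$ (the hypotheses being inherited), shows that $X\cap\Gamma$ is ``$F^r$-normal''; it then remains to deduce genuine $F$-normality.

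That last deduction is the step I expect to be the real difficulty. The $F^r$-cycles and singletons in the $F^r$-subset description cause no trouble, since every $F^r$-cycle is an $F$-cycle and $F$-sparseness is by definition insensitive to replacing $F$ by a power; the delicate point is the subgroup data, because an $F^r$-invariant subgroup of $\Gamma$ need not be $F$-invariant, and so need not at face value fit the form demanded by $F$-normality. I would try to handle this by showing that the subgroups occurring in the Mordell--Lang description of $X\cap\Gamma$ are in fact $F$-invariant -- so that $X\cap\Gamma$ is an honest $F$-subset and the first part of the corollary applies directly -- or, failing that, by analysing precisely when an $F^r$-invariant subgroup of $\Gamma$ is $F$-normal and checking that the ones produced by Theorem~\ref{ml} lie in that class.
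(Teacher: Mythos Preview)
Your argument for the first assertion is essentially the paper's: verify the hypotheses of Theorem~\ref{fset=fnorm} by citing Corollary~\ref{mlfsetfauto} for the spanning set and the proof of Corollary~\ref{corfsetfauto} for the zero-divisor condition, then apply the theorem. The Weil-number digression is superfluous here; the paper simply refers back to the computation already done in the proof of Corollary~\ref{corfsetfauto} (at the level of $\End(G)$) and does not revisit the passage from $\End(G)$ to the subring of $\End(\Gamma)$.

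Where you diverge is in the ``in particular'' clause. The paper's proof is one line: it invokes Theorem~\ref{ml} to conclude that $X\cap\Gamma$ is an $F$-subset of $\Gamma$, and then the first part of the corollary gives $F$-normality. It does not engage at all with the regularity hypothesis or with the possibility that one might only obtain an $F^r$-subset and then have to promote $F^r$-normality to $F$-normality. The worry you articulate---that an $F^r$-invariant subgroup appearing in the description need not be $F$-invariant---is exactly the caveat the paper itself flags in the paragraph following Theorem~\ref{ml}, but the proof of the corollary simply does not address it. So your proposed extra step (arguing that the subgroups arising from Theorem~\ref{ml} are in fact $F$-invariant, or otherwise reconciling $F^r$-normality with $F$-normality) is more than the paper attempts; you are being more scrupulous than the source, not less, and the ``real difficulty'' you anticipate is one the paper leaves implicit.
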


\begin{proof}
We have already verified in the proof of Corollary~\ref{corfsetfauto} that $F^{\delta}-1$ is not a zero divisor in $\End(G)$ for any $\delta>0$.
By Corollary~\ref{mlfsetfauto}, $\Gamma$ has an $F^r$-spanning set for some $r>0$.
So $(\Gamma,F)$ satisfies the conditions of Theorem~\ref{fset=fnorm}.
The ``in particular" clause follows again from the Isotrivial Mordell-Lang theorem of~\cite{fsets}, which is Theorem~\ref{ml} above.
\end{proof}

\bigskip

\begin{thebibliography}{10}

\bibitem{adambell}
Boris Adamczewski and Jason~P. Bell.
\newblock On vanishing coefficients of algebraic power series over fields of
  positive characteristic.
\newblock {\em Invent. Math.}, 187(2):343--393, 2012.

\bibitem{shallit}
Jean-Paul Allouche and Jeffrey Shallit.
\newblock {\em Automatic sequences}.
\newblock Cambridge University Press, Cambridge, 2003.

\bibitem{Zsequence}
Jason~P. Bell.
\newblock A generalised {S}kolem-{M}ahler-{L}ech theorem for affine varieties.
\newblock {\em J. London Math. Soc. (2)}, 73(2):367--379, 2006.

\bibitem{Zsequencecorr}
Jason~P. Bell.
\newblock Corrigendum: ``{A} generalised {S}kolem-{M}ahler-{L}ech theorem for
  affine varieties'' [{J}. {L}ondon {M}ath. {S}oc. (2) {\bf 73} (2006), no. 2,
  367--379; mr2225492].
\newblock {\em J. Lond. Math. Soc. (2)}, 78(1):267--272, 2008.

\bibitem{derksen}
Harm Derksen.
\newblock A {S}kolem-{M}ahler-{L}ech theorem in positive characteristic and
  finite automata.
\newblock {\em Invent. Math.}, 168(1):175--224, 2007.

\bibitem{faltings}
Gerd Faltings.
\newblock The general case of {S}. {L}ang's conjecture.
\newblock In {\em Barsotti {S}ymposium in {A}lgebraic {G}eometry ({A}bano
  {T}erme, 1991)}, volume~15 of {\em Perspect. Math.}, pages 175--182. Academic
  Press, San Diego, CA, 1994.

\bibitem{Gawrychowski&Krieger&Rampersad&Shallit:2010}
P.~Gawrychowski, D.~Krieger, N.~Rampersad, and J.~Shallit.
\newblock Finding the growth rate of a regular or context-free language in
  polynomial time.
\newblock {\em Int. J. Found. Comput. Sci.}, 21:597--618, 2010.

\bibitem{ghioca}
Dragos Ghioca.
\newblock The isotrivial case in the {M}ordell-{L}ang theorem.
\newblock {\em Trans. Amer. Math. Soc.}, 360(7):3839--3856, 2008.

\bibitem{ghiocamoosa}
Dragos Ghioca and Rahim Moosa.
\newblock Division points on subvarieties of isotrivial semi-abelian varieties.
\newblock {\em Int. Math. Res. Not.}, pages Art. ID 65437, 23, 2006.

\bibitem{Ginsburg&Spanier:1966}
S.~Ginsburg and E.~Spanier.
\newblock Bounded regular sets.
\newblock {\em Proc. Amer. Math. Soc.}, 17:1043--1049, 1966.

\bibitem{udi}
Ehud Hrushovski.
\newblock The {M}ordell-{L}ang conjecture for function fields.
\newblock {\em J. Amer. Math. Soc.}, 9(3):667--690, 1996.

\bibitem{Ibarra&Ravikumar:1986}
O.~H. Ibarra and B.~Ravikumar.
\newblock On sparseness, ambiguity, and other decision problems for acceptors
  and transducers.
\newblock In B.~Monien and G.~Vidal-Naquet, editors, {\em STACS 86}, volume 210
  of {\em Lect. Notes in Comput. Sci.}, pages 171--179. Springer, 1986.

\bibitem{fsets2}
Rahim Moosa and Thomas Scanlon.
\newblock The {M}ordell-{L}ang conjecture in positive characteristic revisited.
\newblock In {\em Model theory and applications}, volume~11 of {\em Quad.
  Mat.}, pages 273--296. Aracne, Rome, 2002.

\bibitem{fsets}
Rahim Moosa and Thomas Scanlon.
\newblock {$F$}-structures and integral points on semiabelian varieties over
  finite fields.
\newblock {\em Amer. J. Math.}, 126(3):473--522, 2004.

\bibitem{Szilard&Yu&Zhang&Shallit:1992}
A.~Szilard, S.~Yu, K.~Zhang, and J.~Shallit.
\newblock Characterizing regular languages with polynomial densities.
\newblock In I.~M. Havel and V.~Koubek, editors, {\em MFCS 1992}, volume 629 of
  {\em Lect. Notes in Comput. Sci.}, pages 494--503. Springer, 1992.

\bibitem{Trofimov:1981}
V.~I. Trofimov.
\newblock Growth functions of some classes of languages.
\newblock {\em Kibernetika}, 17:9--12, 1981.
\newblock In Russian. English translation in {\it Cybernetics} {\bf 17} (1981),
  727-731.

\end{thebibliography}

\end{document}